\documentclass[11pt]{article}
\usepackage{amsmath,amssymb,amsbsy,amsfonts,amsthm,latexsym,amsopn,amstext,amsxtra,euscript,amscd,url}
\usepackage[paperheight=10in,paperwidth=7.5in,margin=1in]{geometry}

\begin{document}




\newfont{\teneufm}{eufm10}
\newfont{\seveneufm}{eufm7}
\newfont{\fiveeufm}{eufm5}
%
%
\newfam\eufmfam
                    \textfont\eufmfam=\teneufm \scriptfont\eufmfam=\seveneufm
                    \scriptscriptfont\eufmfam=\fiveeufm

%
%
\def\frak#1{{\fam\eufmfam\relax#1}}
%


\def\bbbr{{\rm I\!R}} 
\def\bbbc{{\rm I\!C}} 
\def\bbbm{{\rm I\!M}}
\def\bbbn{{\rm I\!N}} 
\def\bbbf{{\rm I\!F}}
\def\bbbh{{\rm I\!H}}
\def\bbbk{{\rm I\!K}}
\def\bbbl{{\rm I\!L}}
\def\bbbp{{\rm I\!P}}
\newcommand{\lcm}{{\rm lcm}}
\def\bbbone{{\mathchoice {\rm 1\mskip-4mu l} {\rm 1\mskip-4mu l}
{\rm 1\mskip-4.5mu l} {\rm 1\mskip-5mu l}}}
\def\bbbc{{\mathchoice {\setbox0=\hbox{$\displaystyle\rm C$}\hbox{\hbox
to0pt{\kern0.4\wd0\vrule height0.9\ht0\hss}\box0}}
{\setbox0=\hbox{$\textstyle\rm C$}\hbox{\hbox
to0pt{\kern0.4\wd0\vrule height0.9\ht0\hss}\box0}}
{\setbox0=\hbox{$\scriptstyle\rm C$}\hbox{\hbox
to0pt{\kern0.4\wd0\vrule height0.9\ht0\hss}\box0}}
{\setbox0=\hbox{$\scriptscriptstyle\rm C$}\hbox{\hbox
to0pt{\kern0.4\wd0\vrule height0.9\ht0\hss}\box0}}}}
\def\bbbq{{\mathchoice {\setbox0=\hbox{$\displaystyle\rm
Q$}\hbox{\raise
0.15\ht0\hbox to0pt{\kern0.4\wd0\vrule height0.8\ht0\hss}\box0}}
{\setbox0=\hbox{$\textstyle\rm Q$}\hbox{\raise
0.15\ht0\hbox to0pt{\kern0.4\wd0\vrule height0.8\ht0\hss}\box0}}
{\setbox0=\hbox{$\scriptstyle\rm Q$}\hbox{\raise
0.15\ht0\hbox to0pt{\kern0.4\wd0\vrule height0.7\ht0\hss}\box0}}
{\setbox0=\hbox{$\scriptscriptstyle\rm Q$}\hbox{\raise
0.15\ht0\hbox to0pt{\kern0.4\wd0\vrule height0.7\ht0\hss}\box0}}}}
\def\bbbt{{\mathchoice {\setbox0=\hbox{$\displaystyle\rm
T$}\hbox{\hbox to0pt{\kern0.3\wd0\vrule height0.9\ht0\hss}\box0}}
{\setbox0=\hbox{$\textstyle\rm T$}\hbox{\hbox
to0pt{\kern0.3\wd0\vrule height0.9\ht0\hss}\box0}}
{\setbox0=\hbox{$\scriptstyle\rm T$}\hbox{\hbox
to0pt{\kern0.3\wd0\vrule height0.9\ht0\hss}\box0}}
{\setbox0=\hbox{$\scriptscriptstyle\rm T$}\hbox{\hbox
to0pt{\kern0.3\wd0\vrule height0.9\ht0\hss}\box0}}}}
\def\bbbs{{\mathchoice
{\setbox0=\hbox{$\displaystyle     \rm S$}\hbox{\raise0.5\ht0\hbox
to0pt{\kern0.35\wd0\vrule height0.45\ht0\hss}\hbox
to0pt{\kern0.55\wd0\vrule height0.5\ht0\hss}\box0}}
{\setbox0=\hbox{$\textstyle        \rm S$}\hbox{\raise0.5\ht0\hbox
to0pt{\kern0.35\wd0\vrule height0.45\ht0\hss}\hbox
to0pt{\kern0.55\wd0\vrule height0.5\ht0\hss}\box0}}
{\setbox0=\hbox{$\scriptstyle      \rm S$}\hbox{\raise0.5\ht0\hbox
to0pt{\kern0.35\wd0\vrule height0.45\ht0\hss}\raise0.05\ht0\hbox
to0pt{\kern0.5\wd0\vrule height0.45\ht0\hss}\box0}}
{\setbox0=\hbox{$\scriptscriptstyle\rm S$}\hbox{\raise0.5\ht0\hbox
to0pt{\kern0.4\wd0\vrule height0.45\ht0\hss}\raise0.05\ht0\hbox
to0pt{\kern0.55\wd0\vrule height0.45\ht0\hss}\box0}}}}
\def\bbbz{{\mathchoice {\hbox{$\sf\textstyle Z\kern-0.4em Z$}}
{\hbox{$\sf\textstyle Z\kern-0.4em Z$}}
{\hbox{$\sf\scriptstyle Z\kern-0.3em Z$}}
{\hbox{$\sf\scriptscriptstyle Z\kern-0.2em Z$}}}}
\def\ts{\thinspace}

\newtheorem{theorem}{Theorem}[section]
\newtheorem{lemma}[theorem]{Lemma}
\newtheorem{claim}[theorem]{Claim}
\newtheorem{cor}[theorem]{Corollary}
\newtheorem{prop}[theorem]{Proposition}
\newtheorem{definition}{Definition}
\newtheorem{question}[theorem]{Open Question}
\newtheorem{remark}[theorem]{Remark}

\def\squareforqed{\hbox{\rlap{$\sqcap$}$\sqcup$}}
\def\qed{\ifmmode\squareforqed\else{\unskip\nobreak\hfil
\penalty50\hskip1em\null\nobreak\hfil\squareforqed
\parfillskip=0pt\finalhyphendemerits=0\endgraf}\fi}

\def\cA{{\mathcal A}}
\def\cB{{\mathcal B}}
\def\cC{{\mathcal C}}
\def\cD{{\mathcal D}}
\def\cE{{\mathcal E}}
\def\cF{{\mathcal F}}
\def\cG{{\mathcal G}}
\def\cH{{\mathcal H}}
\def\cI{{\mathcal I}}
\def\cJ{{\mathcal J}}
\def\cK{{\mathcal K}}
\def\cL{{\mathcal L}}
\def\cM{{\mathcal M}}
\def\cN{{\mathcal N}}
\def\cO{{\mathcal O}}
\def\cP{{\mathcal P}}
\def\cQ{{\mathcal Q}}
\def\cR{{\mathcal R}}
\def\cS{{\mathcal S}}
\def\cT{{\mathcal T}}
\def\cU{{\mathcal U}}
\def\cV{{\mathcal V}}
\def\cW{{\mathcal W}}
\def\cX{{\mathcal X}}
\def\cY{{\mathcal Y}}
\def\cZ{{\mathcal Z}}

\newcommand{\comm}[1]{\marginpar{%
\vskip-\baselineskip 
\raggedright\footnotesize
\itshape\hrule\smallskip#1\par\smallskip\hrule}}





\def\MOV{{\bf{MOV}}}

\hyphenation{re-pub-lished}

\def\ord{{\mathrm{ord}}}
\def\Nm{{\mathrm{Nm}}}
\renewcommand{\vec}[1]{\mathbf{#1}}

\def \C{{\bbbc}}
\def \F{{\bbbf}}
\def \L{{\bbbl}}
\def \K{{\bbbk}}
\def \Z{{\bbbz}}
\def \N{{\bbbn}}
\def \Q{{\bbbq}}
\def\E{{\mathbf E}}
\def\H{{\mathbf H}}
\def\G{{\mathcal G}}
\def\O{{\mathcal O}}
\def\cS{{\mathcal S}}
\def \R{{\bbbr}}
\def\Fp{\F_p}
\def \fp{\Fp^*}
\def\\{\cr}
\def\({\left(}
\def\){\right)}
\def\fl#1{\left\lfloor#1\right\rfloor}
\def\rf#1{\left\lceil#1\right\rceil}

\def\Zm{\Z_m}
\def\Zt{\Z_t}
\def\Zp{\Z_p}
\def\Um{\cU_m}
\def\Ut{\cU_t}
\def\Up{\cU_p}

\def\ep{{\mathbf{e}}_p}
\def\HH{\cH}

\def \Prob{{\mathrm {}}}

\def\LC{{\cL}_{C,\cF}(Q)}
\def\LCn{{\cL}_{C,\cF}(nG)}
\def\Tr{\mathrm{Tr}\,}

\def\taubar{\overline{\tau}}
\def\sigmabar{\overline{\sigma}}
\def\Fn{\F_{q^n}}
\def\En{\E(\Fn)}

\def\mand{\qquad \mbox{and} \qquad}

\newcommand{\ZZ}{{\mathbb Z}}
\newcommand{\PP}{{\mathbb P}^1}

\newcommand{\FF}{{\mathbb F}}
\newcommand{\QQ}{{\mathbb Q}}
\newcommand{\fq}{{\FF_q}}
\newcommand{\fqstar}{{\FF^*_q}}
\newcommand{\fqm}{{\FF_{q^m}}}
\newcommand{\fqn}{{\FF_{q^n}}}
\newcommand{\fptwo}{{\FF_{p^2}}}
\newcommand{\fqtwo}{{\FF_{q^2}}}
\newcommand{\fqnstar}{{\FF^*_{q^n}}}
\newcommand{\ffive}{{\FF_5}}
\newcommand{\fqnmi}{{\FF_{q^{nm_i}}}}
\newcommand{\fqs}{{\FF_{q^s}}}
\newcommand{\ftwo}{{\FF_2}}
\newcommand{\ftwon}{{\FF_{2^n}}}
\newcommand{\ztwo}{{\ZZ_2}}
\newcommand{\qtwo}{{\QQ_2}}
\newcommand{\fthreen}{{\FF_{3^n}}}
\newcommand{\fthree}{{\FF_3}}
\newcommand{\fthreem}{{\FF_{3^m}}}
\newcommand{\fthreeseven}{{\FF_{3^7}}}
\newcommand{\fthreer}{{\FF_{3^{2^r}}}}

\newcommand{\rmnum}[1]{\romannumeral #1}

\newcommand{\Disc}{\operatorname{Disc}}
\newcommand{\Res}{\operatorname{Res}}
\newcommand{\wt}{\mbox{wt}}
\newtheorem{conj}[theorem]{Conjecture}

\newcommand{\hlinepad}{\raisebox{0pt}[2.5ex]{\strut}}
\newcommand{\Hline}{\hline\hlinepad}
\newcommand{\HHline}{\hline\hline\hlinepad}
\newtheorem{Rem}[theorem]{Remark}

\newcommand{\ie}{i.e., }
\newcommand{\eg}{e.g., }
\newcommand{\ea}{{\em et al. }}


\title{On isogeny classes of Edwards curves over finite fields}

\author{\begin{tabular}[t]{c@{\extracolsep{8em}}c} 
Omran Ahmadi  &  Robert Granger \\
\small{\url{omran.ahmadi@ucd.ie}} & \small{\url{rgranger@computing.dcu.ie}}\\
\end{tabular} 
\\  
\\
\begin{tabular}{c}
        Claude Shannon Institute \\
        University College Dublin\\
        Dublin 4 \\
        Ireland\\
 \end{tabular}
}

\maketitle

\begin{abstract}
We count the number of isogeny classes of Edwards curves over finite fields, answering a question
recently posed by Rezaeian and Shparlinski.
We also show that each isogeny class contains a {\em complete} Edwards curve,
and that an Edwards curve is isogenous to an {\em original}
Edwards curve over $\F_q$ if and only if its group order is divisible by $8$ if $q \equiv -1 \pmod{4}$, 
and $16$ if $q \equiv 1 \pmod{4}$. 
Furthermore, we give formulae for the proportion of $d \in \F_q \setminus \{0,1\}$ for 
which the Edwards curve $E_d$ is complete or original, relative to the total number of $d$
in each isogeny class.
\end{abstract}


\section{Introduction}

In 2007 Edwards proposed a new normal form for elliptic curves over a field $k$ of characteristic $\ne 2$~\cite{edwards}, namely:
\begin{equation}\label{original}
E_{a}(k) : x^2 + y^2 = a^2(1 + x^2 y^2),
\end{equation}
for $a^5 \ne a$. Bernstein and Lange generalised Edwards' form to incorporate curves of the form
$$E(k):x^2 + y^2 = a^2(1+dx^2y^2),$$
which is elliptic if $ad(1-da^4) \ne 0$~\cite{bl}. All curves in the
Bernstein-Lange form are isomorphic to curves of the following form, referred to as
Edwards curves:
\begin{equation}\label{edwards}
E_d(k) : x^2 + y^2 = 1 + d x^2y^2. 
\end{equation}
Edwards curves over finite fields are of great interest in cryptography since the addition and
doubling formulae are: {\em unified}, which protects against some side-channel
attacks~\cite[Chapters 4 and 5]{ecc2}; {\em complete} when $d$ is a non-square,
which means the addition formulae work for all input points; and are the most
efficient in the literature.
Bernstein \ea have also considered {\em twisted} Edwards curves~\cite{twisted}:
\begin{equation}\label{twisted}
E_{a,d}(k):ax^2 + y^2 = 1 + dx^2y^2,
\end{equation}
which includes more curves over finite fields than does Edwards curves.

Rezaeian and Shparlinski have computed the exact number of distinct
curves of the form~(\ref{original}) and~(\ref{edwards}) over a finite
field $\F_q$ of characteristic $> 2$, up to isomorphism over the algebraic
closure of $\F_q$~\cite{shparlinski}. However they state that counting the number of
distinct isogeny classes over $\F_q$ for these curves is a very natural and challenging question.

In this paper we answer this question fully for fields of characteristic $>2$.
Our starting point is interesting in that it was 
serendipitous, beginning with an incidental empirical observation. 
When searching for suitable parameters for elliptic curve cryptography, for curves of the form~(\ref{edwards}),
we observed that over a finite field $\F_p$ with $p \equiv 1 \pmod{4}$,
it (empirically) holds that 
\[
\#E_d(\F_p) = \#E_{1-d}(\F_p),
\]
and hence by Tate's theorem~\cite{tate}, $E_d$ and $E_{1-d}$ should be isogenous over $\F_p$. 

In the course of proving the above observation using character sum identities, we discovered that the Edwards curve $E_d$  
is isogenous to the Legendre curve:
\begin{equation}\label{legendre}
L_d(\fq): y^2 = x(x-1)(x-d).
\end{equation}
With explicit computation one sees that this isogeny has degree two,
and so $E_d$ inherits a set of $4$-isogenies from the well-known set of isomorphisms of $L_d$,
each as the composition of the $2$-isogeny to $L_d$, an isomorphism of $L_{d}$
to $L_{d'}$, and the dual of the $2$-isogeny from $E_{d'}$ to $L_{d'}$.
In particular $E_d/\F_p$ is $4$-isogenous to $E_{1-d}/\F_p$ for $p \equiv 1 \pmod{4}$.
More generally, for $E_d$ over any finite field $\F_q$ one obtains $4$-isogenies
to $E_{1-d}$, $E_{1/d}$, $E_{1-1/d}$, $E_{1/(1-d)}$ and $E_{d/(d-1)}$, 
being defined over $\F_q$ or $\F_{q^2}$ depending on the quadratic character
of $-1,d$ and $1-d$ in $\F_q$.

We later learned that the above $2$-isogeny is merely a special case of Theorem 5.1 of~\cite{twisted}, which
states that any elliptic curve with three $\F_q$-rational $2$-torsion
points is $2$-isogenous to a twisted Edwards curve of the form~(\ref{twisted}). However
the explicit connection with the Legendre curve and the consequent ramifications contained herein
has --- to the best of our knowledge --- not been made before. 

Using the explicit connection with Legendre curves, counting the number of isogeny classes of Edwards curves
is straightforward; we use a recent result due to Katz~\cite{Katz-ratio}, who studied the isogeny 
classes of Legendre curves. In doing so, we also count the number of supersingular parameters $d$ for 
Edwards curves. We then prove the existence of complete Edwards curves in every isogeny class, providing
formulae for the proportion of $d \in \F_q \setminus \{0,1\}$ for 
which $L_d$ --- and hence $E_d$ --- is complete, relative to the total number of $d$ in each
isogeny class. This total be computed via a Deuring-style class number formula derived by
Katz~\cite{Katz-ratio}, and hence for a given trace one can compute the number of
complete Edwards curve parameters $d$. 

We also address the distribution of original Edwards curves~(\ref{original})
amongst the isogeny classes of Edwards curves. For $q \equiv -1 \pmod{4}$ this
follows from our result on complete Edwards curves, but for $q \equiv 1 \pmod{4}$ we express the proportion of
such curves in a given isogeny class using a set of remarkable ratio results due to
Katz~\cite{Katz-ratio}. Whilst we believe our results may be proven succinctly
using a variation of Katz's approach, our arguments for the proportion of complete and original
Edwards curves rely only on explicit bijections between sets of curves of
different parameter types, and are thus entirely elementary.

\vspace{3mm}
\noindent {\em Notation:} For two elliptic curves over a field $k$, we write $E \sim E'$ when $E$ is isogenous to $E'$ over $k$, and 
$E \cong E'$ when $E$ is isomorphic to $E'$ over the algebraic closure of $k$. Throughout the paper, $\F_p$
refers to a finite field of prime cardinality $p$ and $\F_q$ to an extension
field of cardinality $q=p^m$, where $m \geq 1$. Also,
if the field of definition of a curve or map is not specified, it is assumed
to be a field of characteristic $\ne 2$.


\section{A point counting proof of $E_d(\fq) \sim L_d(\fq)$}\label{pointcount}

It is well known that the elliptic integral
$$
\int\frac{p(x)}{\sqrt{q(x)}}dx,
$$ 
where $p(x)\in\R(x)$ is a rational function and $q(x)\in\R[x]$ is a quartic
polynomial, can be reduced to 
$$
\int\frac{p_1(x)}{\sqrt{q_1(x)}}dx
$$ 
for a rational function $p_1(x)\in\R(x)$ and a cubic polynomial $q_1(x)\in\R[x]$ provided that 
one knows one of the roots of $q(x)$~\cite[Chapter 8]{integral}. 

The finite field analogue of this fact is the following result of
Williams~\cite{KSW}.

\begin{lemma}\label{q-c-s}~\cite{KSW} Let $q$ be an odd prime power and let $\F_q$ denote
the finite field with $q$ elements. Suppose that $F(x)$ is a complex valued function
from $\F_q$ to $\C$ and also let $\chi_2(\cdot)$ denote the quadratic character of $\F_q$.
Also let $Z$ denote the zero set of $a_2x^2+b_2x+c_2$. Then
\begin{eqnarray}
\sum_{x\in \F_q \setminus Z} F\left(\frac{a_1x^2+b_1x+c_1}{a_2x^2+b_2x+c_2}\right)&=& 
 \sum_{x\in \F_q}
\chi_2(Dx^2+\Delta x+d)F(x)\\\nonumber&&+\sum_{x\in \F_q} F(x)-\left\{
    \begin{array}{clcl}
    F\left(\frac{a_1}{a_2}\right), & \;\;\;\mbox{if $a_2\neq 0$}, \\\\
    0,&\;\;\;\mbox{otherwise},\\
    \end{array}\right.
\end{eqnarray}
where $a_1,b_1,c_1,a_2,b_2,c_2\in\F_q$,
\begin{equation}
D=b_2^2-4a_2c_2,\; \Delta=4a_1c_2-2b_1b_2+4a_2c_1,\; d=b_1^2-4a_1c_1,
\end{equation}
and
$$\Delta^2-4dD\neq 0.$$ 
\end{lemma}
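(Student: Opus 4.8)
The plan is to evaluate the left-hand side by counting preimages. For each $t \in \F_q$ set $N(t) = \#\{x \in \F_q \setminus Z : R(x) = t\}$, where $R(x) = (a_1x^2+b_1x+c_1)/(a_2x^2+b_2x+c_2)$ is the argument of $F$. Then the left-hand side is simply $\sum_{t \in \F_q} N(t)\,F(t)$, so the task reduces to showing that $N(t) = 1 + \chi_2(Dt^2 + \Delta x + d)$ for all $t$ except one distinguished value, at which a correction of $-1$ is required. Throughout I use the convention $\chi_2(0)=0$, which is in any case forced on us by the right-hand side.

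First I would rewrite the equation $R(x)=t$, for $x \notin Z$, as the polynomial equation
$$Q_t(x) := (a_1-ta_2)x^2 + (b_1-tb_2)x + (c_1-tc_2) = 0,$$
and compute its discriminant in $x$. A direct expansion gives $\delta(t) := (b_1-tb_2)^2 - 4(a_1-ta_2)(c_1-tc_2) = Dt^2 + \Delta t + d$, exactly the quantity appearing under $\chi_2$. When the leading coefficient $a_1-ta_2$ is nonzero, $Q_t$ is a genuine quadratic whose number of roots in $\F_q$ is $1 + \chi_2(\delta(t))$, and this supplies the main term: summing over all such $t$ produces $\sum_t F(t) + \sum_t \chi_2(\delta(t))F(t)$, matching the first two terms on the right.

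Two technical points must be settled to make this rigorous, and I expect these to be the main obstacle. First, I must verify that no root of $Q_t$ can lie in $Z$: if $x_0 \in Z$ satisfied $Q_t(x_0)=0$, then the denominator vanishing at $x_0$ would force the numerator to vanish there too, making $x_0$ a common root of numerator and denominator. Second, I must locate the value of $t$ where $Q_t$ degenerates and compute the resulting discrepancy. Both points are controlled by the hypothesis $\Delta^2 - 4dD \neq 0$, and the linchpin of the whole argument is the identity $\Delta^2 - 4dD = 16\,\Res(a_1x^2+b_1x+c_1,\ a_2x^2+b_2x+c_2)$, obtained by expanding both sides. Since the characteristic is odd, the hypothesis is therefore \emph{equivalent} to the numerator and denominator being coprime, which simultaneously forbids common roots (so $N(t)$ is the full root count of $Q_t$), forbids $Q_t$ from being identically zero, and forbids $R$ from being constant.

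Finally I would assemble the cases according to $a_2$. If $a_2=0$ then coprimality forces $a_1 \neq 0$, the leading coefficient $a_1-ta_2=a_1$ never vanishes, and $N(t)=1+\chi_2(\delta(t))$ for \emph{every} $t$; no correction is needed, matching the ``otherwise $0$'' branch. If $a_2 \neq 0$, the unique degenerate value is $t^\ast = a_1/a_2$, and there $\delta(t^\ast) = (b_1-t^\ast b_2)^2$ is a perfect square; so the main-term formula predicts a root count of $2$ (when $Q_{t^\ast}$ is linear) or $1$ (when $Q_{t^\ast}$ is a nonzero constant), whereas the true count $N(t^\ast)$ is $1$ or $0$ respectively — a discrepancy of exactly $-1$ in each live subcase, the remaining subcase $b_1=t^\ast b_2,\ c_1=t^\ast c_2$ being precisely the excluded proportional (constant $R$) case. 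Subtracting the single correction $F(a_1/a_2)$ then converts the main term into the right-hand side, completing the proof.
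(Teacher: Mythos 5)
Your argument is correct. Note that the paper itself offers no proof of this lemma: it is quoted verbatim as a result of Williams (the citation \cite{KSW}), so there is nothing internal to compare against. Your preimage-counting route is the natural one and it closes cleanly: the identity $(b_1-tb_2)^2-4(a_1-ta_2)(c_1-tc_2)=Dt^2+\Delta t+d$ checks out by direct expansion, the root count $1+\chi_2(\delta(t))$ for a genuine quadratic is standard in odd characteristic with $\chi_2(0)=0$, and the identity $\Delta^2-4dD=16\bigl((a_1c_2-a_2c_1)^2-(a_1b_2-a_2b_1)(b_1c_2-b_2c_1)\bigr)$ is exactly what is needed to rule out common roots of numerator and denominator, to rule out $Q_t\equiv 0$, and to force $a_1\neq 0$ when $a_2=0$. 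The case analysis at $t^\ast=a_1/a_2$ is also right: there $\delta(t^\ast)=(b_1-t^\ast b_2)^2$ is a square, so the generic formula overcounts by exactly $1$ whether $Q_{t^\ast}$ is linear or a nonzero constant, giving the single correction $-F(a_1/a_2)$. Two cosmetic remarks only: you wrote $\chi_2(Dt^2+\Delta x+d)$ where you mean $\chi_2(Dt^2+\Delta t+d)$; and your claim that the hypothesis is \emph{equivalent} to coprimality is a slight overstatement (when both polynomials have degree $\le 1$ the quantity $\Delta^2-4dD$ vanishes even though they may be coprime), but the implication you actually use --- hypothesis implies no common root and no proportionality --- is the correct one.
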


In the following we use the lemma above to show that $E_d(\fq)$ is isogenous to
$L_d(\fq)$. First notice that the given singular model for Edwards
curves~(\ref{edwards}) has two points at infinity which are singular
and no affine singular points, and resolving the singularities results in four points which
are defined over $\fq$ if and only if $d$ is a quadratic residue in
$\fq$~\cite{bl}. Thus the non-singular model of $E_d(\fq)$ has $2+2\chi_2(d)$
points more than the singular model of $E_d(\fq)$, and 
hence if we rewrite the curve equation of $E_d$ as
\begin{equation}\label{frac}
E_d(\fq): y^2=\frac{x^2-1}{dx^2-1},
\end{equation}
then 
\begin{eqnarray}\label{Edwards-order}
\#E_d(\fq)&=&2+2\chi_2(d)+\sum_{x\in\F_q,\; x\ne \pm d^{1/2}}
(1+\chi_2\left(\frac{x^2-1}{dx^2-1}\right)) \nonumber\\
          &=&2+2\chi_2(d)+q-(1+\chi_2(d))+
\sum_{x\in\F_q,\; x\ne \pm d^{1/2}}\chi_2\left(\frac{x^2-1}{dx^2-1}\right)\nonumber\\
&=&q+1+\chi_2(d)+
\sum_{x\in\F_q,\; x\ne \pm d^{1/2}}\chi_2\left(\frac{x^2-1}{dx^2-1}\right).
\end{eqnarray}

Now on the one hand by applying Lemma~\ref{q-c-s} with $F(x)=\chi_2(x)$, we get
\begin{eqnarray}\label{Ed-Leg}
\sum_{x\in\F_q,\; x\ne \pm d^{1/2}}\chi_2\left(\frac{x^2-1}{dx^2-1}\right)&=& 
\sum_{x\in \F_q}
\chi_2(4dx^2-(4+4d)x+4)\chi_2(x) \nonumber \\
\nonumber &+& \sum_{x\in \F_q} \chi_2(x)-\chi_2(d)\\
&=&\sum_{x\in \F_q} \chi_2((x-1)(dx-1))\chi_2(x)-\chi_2(d)\nonumber\\
&=&\sum_{x\in \F_q} \chi_2(x(x-1)(x-d))-\chi_2(d),
\end{eqnarray}
and on the other hand we have 
\begin{equation}\label{Legendre-order}
\#L_d(\fq)=q+1+\sum_{x\in\F_q}\chi_2(x(x-1)(x-d)),
\end{equation}
where $-\sum_{x\in\F_q}\chi_2(x(1-x)(x-d))$ is the trace of the Frobenius endomorphism.
Thus comparing~(\ref{Edwards-order}),~(\ref{Ed-Leg}),~(\ref{Legendre-order}) we have:
\begin{theorem}\label{mainisog}
The Edwards curve $E_d(\fq)$ and Legendre curve $L_d(\fq)$ are isogenous. 
\end{theorem}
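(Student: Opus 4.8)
The plan is to deduce the isogeny from an equality of point counts together with Tate's isogeny theorem~\cite{tate}, which guarantees that two elliptic curves over a finite field are isogenous over that field exactly when they have the same number of rational points (equivalently, the same Frobenius trace). All of the analytic input needed for this comparison has already been assembled in the displays preceding the statement, so the proof reduces to combining them and citing Tate.

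First I would take the expression for $\#E_d(\fq)$ recorded in~(\ref{Edwards-order}), which arises from the smooth model of the fractional form~(\ref{frac}) after incorporating the correction term $2+2\chi_2(d)$ that accounts for the points lying above the two singular points at infinity of the plane model~(\ref{edwards}). Into this I substitute the evaluation of the character sum $\sum_{x\ne\pm d^{1/2}}\chi_2\!\left((x^2-1)/(dx^2-1)\right)$ carried out in~(\ref{Ed-Leg}) by applying Williams' identity (Lemma~\ref{q-c-s}) with $F=\chi_2$, numerator data $(a_1,b_1,c_1)=(1,0,-1)$, and denominator data $(a_2,b_2,c_2)=(d,0,-1)$. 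On the right of~(\ref{Ed-Leg}) the term $\sum_{x\in\F_q}\chi_2(x)$ vanishes because $\chi_2$ is a nontrivial character, while the boundary term $-\chi_2(d)$ comes from the $a_2\neq 0$ branch of the lemma.

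Carrying out this substitution, the two stray $\chi_2(d)$ contributions cancel and I obtain $\#E_d(\fq)=q+1+\sum_{x\in\F_q}\chi_2(x(x-1)(x-d))$, which is precisely the count~(\ref{Legendre-order}) for $\#L_d(\fq)$. Hence the two curves have equal Frobenius traces, and Tate's theorem yields the isogeny over $\fq$. The only step demanding genuine care — and the one I would verify most carefully — is the chain of character-sum manipulations in~(\ref{Ed-Leg}): one must check that the quadratic $4dx^2-(4+4d)x+4$ produced by Lemma~\ref{q-c-s} factors, up to the square $\chi_2(4)=1$, as $(x-1)(dx-1)$, and then that the change of variable $x\mapsto x/d$ transforms $\chi_2(x(x-1)(dx-1))$ into $\chi_2(x(x-1)(x-d))$, which holds since $\chi_2(d^{-2})=1$. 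Once these identities are confirmed, comparing~(\ref{Edwards-order}),~(\ref{Ed-Leg}) and~(\ref{Legendre-order}) is immediate and the theorem follows.
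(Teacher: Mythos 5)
Your proposal is correct and follows exactly the paper's own argument in \S\ref{pointcount}: combine the point count~(\ref{Edwards-order}) for the desingularised model of~(\ref{frac}) with the character-sum evaluation~(\ref{Ed-Leg}) obtained from Lemma~\ref{q-c-s}, match the result against~(\ref{Legendre-order}), and conclude by Tate's theorem. The detail checks you flag (the factorisation $4dx^2-(4+4d)x+4 = 4(x-1)(dx-1)$, the substitution $x\mapsto x/d$, the vanishing of $\sum_x\chi_2(x)$, and the boundary term $-\chi_2(d)$) are precisely the verifications the paper carries out.
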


Lemma~\ref{q-c-s} can be viewed as a means of establishing isogeny relations between curves 
defined by relations such as 
\begin{equation}\label{quadratio}
y^2 = \frac{a_1x^2+b_1x+c_1}{a_2x^2+b_2x+c_2},
\end{equation}
and curves defined by $y^2=x(Dx^2+\Delta x+d)$. In \S\ref{quad} we show how to derive
an addition law for curves of the form~(\ref{quadratio}) and prove results similar to those presented in the
intervening sections.


\section{$4$-isogenies of $E_d$}

In this section we detail how to compute explicit $4$-isogenies for $E_d$,
starting with the $2$-isogeny from $E_d$ to $L_d$ and its dual. We then detail 
the well-known isomorphisms of $L_d$ and compose these maps to form the desired $4$-isogenies.

\subsection{Explicit $2$-isogeny $\psi_d:E_d \rightarrow L_d$}

We now derive a $2$-isogeny from $E_d$ to $L_d$, as presented in
the following result.

\begin{theorem}\label{2isogeny}
Let $(x,y) \in E_d$. Then $\psi_d:E_d \rightarrow L_d$ 
\[
(x,y) \mapsto \left( \frac{1}{x^2}, \frac{y(d-1)}{x(1-y^2)}\right).
\]
is a 2-isogeny.
The dual of $\psi_d$ is $\widehat{\psi}_d:L_d \rightarrow E_d:$
\[
(x,y) \mapsto \left( \frac{2{y}}{d-{x}^2}, \frac{{y}^2 - {x}^2(1-d)}{{y}^2 + {x}^2(1-d)}\right).
\]
\end{theorem}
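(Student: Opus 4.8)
The plan is to verify both displayed maps by direct substitution, to show each is a degree-$2$ isogeny, and then to identify $\widehat{\psi}_d$ as the dual of $\psi_d$ by checking that the composite $\widehat{\psi}_d \circ \psi_d$ equals the doubling map $[2]$ on $E_d$. Throughout I would work with the rewritten model~(\ref{frac}), whose defining relation gives $y^2 = (1-x^2)/(1-dx^2)$ and hence the auxiliary identity $1 - y^2 = x^2(1-d)/(1-dx^2)$; these two identities are the workhorses that let one eliminate $y$ in favour of $x$ at each stage.

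First I would check that $\psi_d$ maps $E_d$ into $L_d$. Writing $(u,v) = \psi_d(x,y)$, one computes $u(u-1)(u-d) = (1-x^2)(1-dx^2)/x^6$, while substituting the identity for $1-y^2$ into $v^2 = y^2(d-1)^2/(x^2(1-y^2)^2)$ cancels the factor $(d-1)^2 = (1-d)^2$ and returns the same expression. Since $E_d$ and $L_d$ are smooth projective curves, the rational map $\psi_d$ extends uniquely to a morphism, and as the identity $(0,1) \in E_d$ is sent to the point at infinity of $L_d$, the standard fact that a morphism of elliptic curves fixing the identity is a homomorphism makes $\psi_d$ an isogeny. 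To fix the degree I would compute the kernel: the affine points sent to infinity are exactly $(0,1)$ and the $2$-torsion point $(0,-1)$, and one checks that the points at infinity of $E_d$ map to $2$-torsion of $L_d$ rather than to the identity, so $\ker \psi_d = \{(0,1),(0,-1)\}$ has order $2$; as $u = x^{-2}$ is separable in characteristic $\ne 2$, the isogeny is separable and $\deg \psi_d = 2$.

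Next I would verify $\widehat{\psi}_d$ analogously: substituting its coordinate functions into the Edwards equation and simplifying via the Legendre relation $v^2 = u(u-1)(u-d)$ confirms that it maps $L_d$ into $E_d$, and it again sends the identity at infinity to $(0,1)$, hence is an isogeny. It then remains to identify $\widehat{\psi}_d$ with the dual. Since the dual is the unique isogeny satisfying $\widehat{\psi}_d \circ \psi_d = [\deg \psi_d] = [2]$, it suffices to compute this composite and match it against the Edwards doubling formula $[2](x,y) = \left(2xy/(x^2+y^2),\,(y^2-x^2)/(2-x^2-y^2)\right)$. A convenient intermediate simplification is $v = -y(1-dx^2)/x^3$, which streamlines both coordinates.

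The \textbf{main obstacle} is this last step. Checking $\widehat{\psi}_d \circ \psi_d = [2]$ requires composing two explicit rational maps, clearing denominators, and repeatedly invoking the curve relation to reduce everything to a function of $x$ alone. The first coordinate comes out quickly, reducing to the identity $(dx^2-1)(x^2+y^2) = dx^4-1$, which follows at once from $dx^2y^2 = x^2+y^2-1$. The second coordinate is the delicate part: one must recognise that, after substituting $y^2 = (1-x^2)/(1-dx^2)$, the two quantities $v^2 - u^2(1-d)$ and $v^2 + u^2(1-d)$ become (up to the common factor $x^{-6}$) the polynomials $1 - 2x^2 + dx^4$ and $1 - 2dx^2 + dx^4$, whose ratio is precisely the $y$-coordinate of $[2](x,y)$. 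Everything else is either a routine substitution or an appeal to the standard structure theory of isogenies.
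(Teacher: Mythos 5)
Your proposal is correct, and every identity you rely on checks out: $v^2-u^2(1-d)$ and $v^2+u^2(1-d)$ do reduce to $(1-2x^2+dx^4)/x^6$ and $(1-2dx^2+dx^4)/x^6$ respectively, which matches $(y^2-x^2)/(2-x^2-y^2)$ after substituting $y^2=(1-x^2)/(1-dx^2)$, and the identity $(dx^2-1)(x^2+y^2)=dx^4-1$ handles the first coordinate. However, your route is genuinely different from the paper's. The paper does not verify anything coordinate-by-coordinate against the doubling map: it factors $\psi_d$ as $\phi_d\circ\tau$, where $\tau$ is an explicit birational isomorphism from $E_d$ to the Weierstrass curve $W_d:y^2=x^3+2(1+d)x^2+(1-d)^2x$ and $\phi_d:W_d\to L_d$ is the known $2$-isogeny from Theorem 5.1 of the twisted-Edwards paper; the dual is then obtained for free as $\widehat{\tau}\circ\widehat{\phi}_d$, and the only computation left is checking that these compositions simplify to the stated formulas. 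That decomposition buys the degree, the kernel, and the duality statement all at once from cited results, at the cost of importing $\phi_d$ and $\widehat{\phi}_d$ from elsewhere and of a careful discussion of the exceptional points of the birational map $\tau$. Your direct verification is self-contained and arguably more transparent --- landing on the target curve, identifying the kernel $\{(0,\pm1)\}$ together with separability of $u=x^{-2}$ to get degree $2$, and certifying the dual via the characterization $\widehat{\psi}_d\circ\psi_d=[2]$ --- but it shifts the burden onto the heavier symbolic computation of the composite, and onto the bookkeeping of the desingularized points at infinity (needed both to extend the rational map to a morphism and to confirm that those points land on $2$-torsion of $L_d$ rather than in the kernel). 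Both arguments are sound; yours trades a citation for a computation.
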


Note that $\psi_d$ is defined on all points of $E_d$ except the kernel elements
$(0,\pm1)$, which map to $\mathcal{O} \in L_d$. 

\begin{proof}
One has the following birational transformation $\tau$ 
\[
\tau(x,y) = \left( (1-d) \frac{1+y}{1-y}, (1-d) \frac{2(1+y)}{x(1-y)}\right), 
\]
from $E_d$ to the Weierstrass curve 
\[
W_d : y^2 = x^3 + 2(1+d)x^2  + (1-d)^2 x,
\]
with inverse
\[
\tau^{-1}(x,y) = \left( \frac{2x}{y}, \frac{x - (1-d)}{x + (1-d)}\right).
\]
While $\tau$ is not defined for the points $(0,\pm 1) \in E_d$, one obtains an
everywhere-defined isomorphism between the respective desingularized
projective models by sending $(0,1)$ to $\mathcal{O} \in W_d$ and $(0,-1)$ to $(0,0)$. Similarly,
$\tau^{-1}$ is not defined at points $(x,y) \in W_d$ satisying $y(x+1-d) = 0$, but if $d$ is a square the points other 
than $(0,0)$ map to points of order $2$ and $4$ at infinity on the
desingularisation of $E_d$ (see the discussion on exceptional points after
Theorem 3.2 of~\cite{twisted}).
The $2$-isogeny used in the proof of Theorem 5.1 of~\cite{twisted} now maps
$W_d$ directly to $L_d$ via 
\[
\phi_d(x,y) = \left( \frac{y^2}{4x^2}, \frac{y((1-d)^2 - x^2)}{8x^2}\right),
\]
with dual 
\[
\widehat{\phi}_d(x,y) = \left( \frac{y^2}{x^2}, \frac{y(d - x^2)}{x^2}\right).
\]
One can verify that the compositions $\phi_d \circ \tau$ and $\widehat{\tau} \circ \widehat{\phi}_d$ give the
stated $\psi_d$ and $\widehat{\psi}_d$ respectively.
\end{proof}

\subsection{Isomorphisms of $L_d$}

The set of isomorphisms of $L_d$ are induced by the two involutions
$\sigma_1(d) = 1-d$ and $\sigma_{2}(d)=1/d$, which induce the following maps from
 $L_d$ to $L_{1-d}$ and $L_{1/d}$ respectively:
\begin{eqnarray}
\label{s1} &\sigma_1:& L_d \longrightarrow L_{1-d}: (x,y) \mapsto (1-x,\sqrt{-1}y),\\
\label{s2} &\sigma_2:& L_d \longrightarrow L_{1/d}: (x,y) \mapsto (x/d,y/d^{3/2}).
\end{eqnarray}
As transformations acting on a given field, the group generated by $\sigma_1,\sigma_2$ is:
\[
H = \{1,\sigma_1,\sigma_2, \sigma_1\sigma_2,\sigma_2\sigma_1, \sigma_1\sigma_2\sigma_1\},
\]
which is isomorphic to the symmetric group $S_3$. The orbit of $d\neq 0,1$ under the action of
$H$ is 
\begin{equation}\label{isovalues}
\Big\{ d, 1-d, \frac{1}{d}, 1-\frac{1}{d}, \frac{1}{1-d}, \frac{d}{d-1} \Big\}
\end{equation}
which has 6 distinct elements provided that
$d$ is
not a root of $d^2 - d + 1=0$ or $(d+1)(d-2)(2d-1)=0$.
Hence we have isomorphisms between each pair of $L_d$, $L_{1-d}$,
$L_{1/d}$, $L_{1- 1/d}$, $L_{1/(1-d)}$ and $L_{d/(d-1)}$. For
completeness we give here the remaining three isomorphisms from $L_d$ to
$L_{\sigma(d)}$ not listed in~(\ref{s1}),(\ref{s2}):
\begin{eqnarray}
\label{s12} &\sigma_1\sigma_2:& L_d \longrightarrow L_{1-\frac{1}{d}}: (x,y) \mapsto (1-x/d,\sqrt{-1}y/d^{3/2}),\\
\label{s21} &\sigma_2\sigma_1:& L_d \longrightarrow L_{\frac{1}{1-d}}: (x,y) \mapsto \left(\frac{1-x}{1-d},\frac{\sqrt{-1}y}{(1-d)^{3/2}}\right),\\
\label{s121} &\sigma_1\sigma_2\sigma_1:& L_d \longrightarrow L_{\frac{d}{d-1}}: (x,y) \mapsto \left(\frac{x-d}{1-d},-\frac{y}{(1-d)^{3/2}}\right).
\end{eqnarray}

\subsection{$4$-isogenies of $E_d$ to $E_{\sigma(d)}$}

Let $\sigma \in H$. Then $\omega_{\sigma(d)}: E_d \rightarrow E_{\sigma(d)}$ is obtained as the following composition:
\[
\omega_{\sigma(d)} = \widehat{\psi}_{\sigma(d)} \circ \sigma \circ \psi_d.
\]
The $2$-isogeny $\widehat{\psi}_{\sigma(d)}$ can be obtained by taking $\widehat{\psi}_d$ and substituting $\sigma(d)$ for $d$.
We do not write down all possible $4$-isogenies but note that whether each is
defined over $\F_q$ or $\F_{q^2}$ is dependent upon the 
quadratic character of $-1, d$ and $1-d$, as determined by maps~(\ref{s1}--\ref{s121}). 
For example, for $q \equiv 1 \pmod{4}$ one has $\chi_2(-1) = 1$ and so $\sigma_1$ is defined over $\F_q$ and
$E_d \sim E_{1-d}$, which was our original observation. We note that the duals
of each of these isogenies are also easily computed.

\subsection{$4$-isogenies of twisted Edwards curves}

One can also map twisted Edwards curves~(\ref{twisted}) to a Legendre form
curve, as given by the following theorem, the proof of which is the same as
the proof of Theorem~\ref{2isogeny}, one having first applied the isomorphism $E_{a,d} \rightarrow
E_{d/a}: (x,y) \mapsto (\sqrt{a}x,y)$.

\begin{theorem} Let $(x,y) \in E_{a,d}$. Then $\psi_{a,d}:E_{a,d} \rightarrow L_{d/a}:$ 
\[
(x,y) \mapsto \left( \frac{1}{ax^2}, \frac{y(d-a)}{a^{3/2}x(1-y^2)}\right).
\]
The dual of $\psi_{a,d}$ is $\widehat{\psi}_{a,d}:L_{d/a} \rightarrow E_{a,d}:$
\[
(x,y) \mapsto \left( \frac{2\sqrt{a}y}{d-{ax}^2}, \frac{{ay}^2 - {x}^2(a-d)}{{ay}^2 + {x}^2(a-d)}\right).
\]
\end{theorem}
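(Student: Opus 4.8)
The plan is to reduce the twisted case entirely to the untwisted case already handled in Theorem~\ref{2isogeny}, exactly as the preamble suggests. The linchpin is the map
\[
\iota : E_{a,d} \longrightarrow E_{d/a}, \qquad (x,y) \mapsto (\sqrt{a}\,x,\, y),
\]
which I first verify is an isomorphism by direct substitution: writing $X=\sqrt{a}\,x$ and $Y=y$, the relation $ax^2+y^2 = 1 + dx^2y^2$ becomes $X^2 + Y^2 = 1 + (d/a)X^2Y^2$, so $\iota$ carries $E_{a,d}$ onto $E_{d/a}$ with inverse $\iota^{-1}(x,y)=(x/\sqrt{a},\,y)$. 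I would then simply set $\psi_{a,d} := \psi_{d/a}\circ\iota$, where $\psi_{d/a}$ is the $2$-isogeny of Theorem~\ref{2isogeny} with parameter $d'=d/a$ and target $L_{d/a}$.

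The remaining work is bookkeeping. For the forward map I substitute $X=\sqrt{a}\,x$ and $d'=d/a$ into $\psi_{d'}(X,Y)=\big(1/X^2,\ Y(d'-1)/(X(1-Y^2))\big)$; the first coordinate becomes $1/(ax^2)$, and after clearing $a$ from $d/a-1=(d-a)/a$ and absorbing the $1/\sqrt{a}$ coming from $X$ in the denominator, the second coordinate becomes $y(d-a)/(a^{3/2}x(1-y^2))$, matching the claimed $\psi_{a,d}$. For the dual I take $\widehat{\psi}_{a,d} := \iota^{-1}\circ\widehat{\psi}_{d/a}$ (the dual of $\psi_{d/a}\circ\iota$ being $\iota^{-1}\circ\widehat{\psi}_{d/a}$, since the dual of an isomorphism is its inverse), apply $\widehat{\psi}_{d'}$ with $d'=d/a$ on $L_{d/a}$, and post-compose with $\iota^{-1}$. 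The factor $1/\sqrt{a}$ then enters only the first coordinate, turning $2ay/(d-ax^2)$ into $2\sqrt{a}\,y/(d-ax^2)$, while clearing $a$ from $1-d/a=(a-d)/a$ in the numerator and denominator of the second coordinate yields $(ay^2-x^2(a-d))/(ay^2+x^2(a-d))$, exactly as stated.

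Because $\iota$ is an isomorphism and $\psi_{d/a}$ is a $2$-isogeny, the composite $\psi_{a,d}$ is automatically a $2$-isogeny, and its kernel is the $\iota^{-1}$-image of $\ker\psi_{d/a}=\{(0,\pm1)\}$, which is again $\{(0,\pm1)\}$ since $\iota$ fixes these points; the exceptional-point analysis after Theorem~\ref{2isogeny} then transports verbatim. I expect no genuine obstacle in the algebra, since every identity is a routine substitution; the one point demanding care is the \emph{field of definition}. The map $\iota$, and hence $\psi_{a,d}$, involves $\sqrt{a}$ (indeed $a^{3/2}$), so it is defined over $\F_q$ precisely when $a$ is a square in $\F_q$ and otherwise only over $\F_{q^2}$. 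Keeping track of this square root, and of where the individual maps fail to be defined, is the only subtlety; once it is handled the theorem follows immediately from Theorem~\ref{2isogeny}.
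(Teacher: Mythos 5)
Your proposal is correct and is exactly the paper's argument: the paper proves this theorem by noting the proof is the same as that of Theorem~\ref{2isogeny} after first applying the isomorphism $E_{a,d}\rightarrow E_{d/a}$, $(x,y)\mapsto(\sqrt{a}x,y)$, which is precisely your map $\iota$ and your composition $\psi_{d/a}\circ\iota$ with dual $\iota^{-1}\circ\widehat{\psi}_{d/a}$. The substitution bookkeeping you carry out matches the stated formulae, so nothing further is needed.
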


One therefore obtains a set of $4$-isogenies from the isomorphisms of
$L_{d/a}$, exactly as before.


\section{Isomorphisms from $L_d$ to Edwards curves}

In addition to the above $2$-isogeny between $E_d$ and $L_d$, one can also
consider when $L_d$ is birationally equivalent to an Edwards curve, \ie is
isomorphic to an Edwards curve. Such isomorphisms have two immediate
consequences. Firstly, for each such isomorphism one obtains a $2$-isogeny of
$E_d$ to another Edwards curve $E_{d'}$ via the composition of $\psi_d$ and 
the isomorphism, see \S\ref{2isog}. Secondly, one is able to deduce the set of Edwards curves
isomorphic to $E_d$, see \S\ref{Eiso}.

\subsection{Isomorphisms from $L_d$ to $E_{\bar{d}}$}\label{2isog}

Since $L_d:y^2 = x^3 - (1+d)x^2 + dx$, one can transform $L_d$ to the Montgomery
curve 
$$M_{A,B}: By^2=x^3+Ax^2+x$$ 
with $A= - (1+d)/\sqrt{d}$, $B = 1/d\sqrt{d}$ via 
$(x,y) \mapsto (x/\sqrt{d},y)$. Using Theorem 3.2 of~\cite{twisted} one then
obtains 
\[
\left( \frac{x}{y\sqrt{d}}, \frac{x-\sqrt{d}}{x+\sqrt{d}} \right) \in
E_{-d(1-\sqrt{d})^2,-d(1+\sqrt{d})^2}, 
\]
which is isomorphic to $E_{\bar{d}}$ with $\bar{d} = \left( \frac{1 + \sqrt{d}}{1 - \sqrt{d}}\right)^2$
with
\[
\rho_d : L_d \rightarrow E_{\bar{d}} : (x,y) \mapsto \left( \sqrt{-1}(1-\sqrt{d})\frac{x}{y}, \frac{x-\sqrt{d}}{x+\sqrt{d}} \right).
\]
Taking the negative root of $d$ in the above transformations gives a second isomorphism, which together we
write as
\[
\rho_{d,\pm} : L_d \rightarrow E_{\bar{d}^{\pm 1}} : (x,y) \mapsto \left(
\sqrt{-1}(1 \mp \sqrt{d})\frac{x}{y}, \frac{x \mp \sqrt{d}}{x \pm \sqrt{d}} \right).
\]
We also have
\[
\widehat{\rho}_{d,\pm}: E_{\bar{d}^{\pm 1}} \rightarrow L_d : (x,y) \mapsto
\left( \pm \sqrt{d}\frac{1+y}{1-y}, 
\pm \sqrt{-1}\sqrt{d}(1 \mp \sqrt{d})\frac{1+y}{x(1-y)} \right).
\]
Clearly these isomorphisms are only defined over the ground field 
if both $-1$ and $d$ are quadratic residues. 

Observe that the value $\bar{d}$ is invariant under the substitution $d
\leftarrow 1/d$, hence the $L_d$-isomorphic curve $L_{1/d}$ maps to
$E_{\bar{d}}$ also, but with the $\pm$ isogenies defined instead by 
\[
\rho_{1/d,\pm} : L_{1/d} \rightarrow E_{\bar{d}^{\pm 1}} : (x,y) \mapsto
\left( \sqrt{-1}(1 \mp \sqrt{1/d})\frac{x}{y}, \frac{x \mp \sqrt{1/d}}{x \pm \sqrt{1/d}} \right),
\]
with inverse $\widehat{\rho}_{1/d,\pm}: E_{\bar{d}^{\pm 1}} \rightarrow L_{1/d}:$
\[
(x,y) \mapsto \left( \pm \sqrt{1/d}\frac{1+y}{1-y}, 
\pm \sqrt{-1}\sqrt{1/d}(1 \mp \sqrt{1/d})\frac{1+y}{x(1-y)} \right).
\] 

Similarly, one can first map $L_d$ to $L_{\sigma(d)}$ for any $\sigma \in H$, and then apply $\rho_{d,\pm}$ but with the substitution 
$d \leftarrow \sigma(d)$ to give $\theta_{\sigma(d),\pm}: L_{d} \rightarrow
L_{\sigma(d)} \rightarrow E_{\overline{\sigma(d)}^{\pm 1}}$.
We thus have twelve isomorphisms $\theta_{\sigma(d),\pm}$ from $L_d$ to the six curves
$E_{\bar{d}_{i}^{\pm 1}}$ for $i \in \{1,2,3\}$, with:
\[
\bar{d}_{1}^{\pm 1} = \left( \frac{1 \pm \sqrt{d}}{1 \mp \sqrt{d}}\right)^2,
\bar{d}_{2}^{\pm 1} = \left( \frac{1 \pm \sqrt{1-d}}{1 \mp \sqrt{1-d}}\right)^2 \ \text{and} \ 
\bar{d}_{3}^{\pm 1} = \left( \frac{1 \pm \sqrt{\frac{d}{d-1}}}{1 \mp \sqrt{\frac{d}{d-1}}}\right)^2. 
\]
As noted above the twelve isomorphisms have only the six image 
curves $E_{\bar{d}_{1}^{\pm 1}}, E_{\bar{d}_{2}^{\pm 1}}$ and
$E_{\bar{d}_{3}^{\pm 1}}$, since $d$ and $1/d$ map to $\bar{d}_1$, $1-d$ and
$1/(1-d)$ map to $\bar{d}_2$, and $d/(d-1)$ and $1-1/d$ map to $\bar{d}_3$.
These curves are therefore isomorphic and each has $j$-invariant
\[
\frac{2^8(d^2 - d + 1)^3}{(d(d-1))^2},
\]
which is the Legendre curve $j$-invariant $j_L(d)$.

Taking the composition of $\psi_d$ and an isomorphism from each of the six
pairs of isomorphisms above --- one
from each pair that have the same image --- one obtains 
$2$-isogenies of $E_d$ to $E_{\bar{d}_{1}^{\pm 1}}, E_{\bar{d}_{2}^{\pm 1}}$ and
$E_{\bar{d}_{3}^{\pm 1}}$, again defined over $\F_{q}$ or $\F_{q^2}$ depending
on the quadratic charcter of $-1,d$ and $1-d$,
which we summarise in Theorem~\ref{2isogenies}. 
We note that Moody and Shumow have independently given equivalent isogenies~\cite{moody}, having 
obtained them using a different approach.

\begin{theorem}\label{2isogenies}
There exist $2$-isogenies of $E_d$ to $E_{\bar{d}_{1}^{\pm 1}}, E_{\bar{d}_{2}^{\pm 1}}$ and
$E_{\bar{d}_{3}^{\pm 1}}$, given by the following maps, respectively:
\begin{itemize}
\item[(a)] $\epsilon_{\bar{d}_{1},\pm} : E_d \rightarrow
  E_{\bar{d}_{1}^{\pm 1}}: (x,y) \mapsto \left( \frac{\sqrt{-1}(1 \mp \sqrt{d})}{d-1}
  \frac{1-y^2}{xy}, \frac{1 \mp \sqrt{d}x^2}{1 \pm \sqrt{d}x^2} \right)$,
\item[(b)] $\epsilon_{\bar{d}_{2},\pm} : E_d \rightarrow
  E_{\bar{d}_{2}^{\pm 1}}: (x,y) \mapsto \left( (1 \mp \sqrt{1-d})xy,
  \frac{1-(1 \mp \sqrt{1-d})x^2}{1-(1 \pm \sqrt{1-d})x^2} \right)$,
\item[(c)] $\epsilon_{\bar{d}_{3},\pm} : E_d \rightarrow
  E_{\bar{d}_{3}^{\pm 1}}: (x,y) \mapsto \left( \frac{\sqrt{d-1} \mp
    \sqrt{d}}{1-d}\frac{x}{y}, \frac{1-\big(d \pm (1-d)\sqrt{\frac{d}{d-1}}\big)x^2}{1-\big(d \mp (1-d)\sqrt{\frac{d}{d-1}}\big)x^2}\right)$.
\end{itemize} 
\end{theorem}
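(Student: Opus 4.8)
The plan is to prove each of (a)--(c) by \emph{direct composition}, exploiting the fact that by construction $\epsilon_{\bar d_i,\pm}=\theta_{\sigma(d),\pm}\circ\psi_d$, where $\psi_d:E_d\to L_d$ is the $2$-isogeny of Theorem~\ref{2isogeny} and $\theta_{\sigma(d),\pm}=\rho_{\sigma(d),\pm}\circ\sigma$ is the isomorphism $L_d\to E_{\overline{\sigma(d)}^{\pm1}}$ constructed in \S\ref{2isog}, for a suitable representative $\sigma\in H$ of each orbit pair. Concretely, part (a) uses $\sigma=1$ (so $\theta_{d,\pm}=\rho_{d,\pm}$), part (b) uses $\sigma=\sigma_1:L_d\to L_{1-d}$, and part (c) uses $\sigma=\sigma_1\sigma_2\sigma_1:L_d\to L_{d/(d-1)}$; the other member of each pair yields the same image curve and need not be treated separately. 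Since $\deg\psi_d=2$ and each $\theta_{\sigma(d),\pm}$ is an isomorphism, every composite is automatically a $2$-isogeny, so it remains only to check that it agrees with the stated rational map.

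For part (a) I would substitute $\psi_d(x,y)=\bigl(1/x^2,\,y(d-1)/(x(1-y^2))\bigr)$ directly into $\rho_{d,\pm}$. The first coordinate $\sqrt{-1}(1\mp\sqrt d)\,X/Y$ collapses to $\tfrac{\sqrt{-1}(1\mp\sqrt d)}{d-1}\tfrac{1-y^2}{xy}$ purely formally, and the second coordinate $(X-\sqrt d)/(X+\sqrt d)$ becomes $(1\mp\sqrt d\,x^2)/(1\pm\sqrt d\,x^2)$ after clearing the factor $x^2$; notably, no use of the curve equation is required here. For parts (b) and (c) one first applies $\sigma_1$, respectively $\sigma_1\sigma_2\sigma_1$, to $\psi_d(x,y)$ and then feeds the result into $\rho_{1-d,\pm}$, respectively $\rho_{d/(d-1),\pm}$. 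The crucial step is now to invoke the defining relation of $E_d$ in the form $x^2+y^2=1+dx^2y^2$, from which one extracts the two identities
\begin{equation}\label{curveids}
(x^2-1)(1-y^2)=(d-1)x^2y^2,\qquad (1-dx^2)(1-y^2)=(1-d)x^2.
\end{equation}
These are exactly what convert the messy quotients into the advertised monomials: for (b) the first identity in~\eqref{curveids} reduces $(1\mp\sqrt{1-d})(x^2-1)(1-y^2)/((d-1)xy)$ to $(1\mp\sqrt{1-d})xy$, while $(X-\sqrt{1-d})/(X+\sqrt{1-d})$ yields the stated ratio after multiplying numerator and denominator by $x^2$; for (c) the second identity reduces $X/Y$ to $x\sqrt{1-d}/y$, and multiplying through by $x^2(1-d)$ turns the second coordinate into the displayed expression.

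The main obstacle, and the only genuinely delicate bookkeeping, is the consistent handling of the nested radicals $\sqrt{-1}$, $\sqrt d$, $\sqrt{1-d}$ and $\sqrt{d/(d-1)}$ together with the $\pm$ signs. One must fix compatible branches so that the relations $\sqrt{-1}\,\sqrt{1-d}=\sqrt{d-1}$ and $\sqrt{d/(d-1)}\cdot\sqrt{d-1}=\sqrt d$ hold throughout, and then track how these interact with the two choices of root built into $\rho_{\sigma(d),\pm}$. Once a branch convention is fixed, each of (a)--(c) reduces to the two elementary simplifications above combined with~\eqref{curveids}, and the remaining verification is routine.
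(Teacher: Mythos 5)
Your proposal is correct and is essentially the paper's own proof: the paper establishes Theorem~\ref{2isogenies} in exactly the way you describe, namely by composing the $2$-isogeny $\psi_d$ of Theorem~\ref{2isogeny} with one isomorphism $\theta_{\sigma(d),\pm}=\rho_{\sigma(d),\pm}\circ\sigma$ from each of the three pairs sharing an image curve, and your choice of representatives ($\sigma=1$, $\sigma_1$, $\sigma_1\sigma_2\sigma_1$) and of the two curve identities $(x^2-1)(1-y^2)=(d-1)x^2y^2$ and $(1-dx^2)(1-y^2)=(1-d)x^2$ is exactly what the simplification requires. One caveat: carrying your computation for (c) to the end gives first coordinate $(\sqrt{d-1}\mp\sqrt{d})\,x/y$, not the printed $\tfrac{\sqrt{d-1}\mp\sqrt{d}}{1-d}\,x/y$; since the first coordinate is forced up to sign by the second via $(x')^2=\bigl(1-(y')^2\bigr)/\bigl(1-\bar{d}_3^{\pm1}(y')^2\bigr)$, your version is the consistent one and the extra factor $1/(1-d)$ in the statement appears to be a typo rather than a defect of your argument.
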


Theorem~\ref{2isogenies} allows one to write down the set of $4$-isogenies between $E_d$ and any
$E_{\sigma(d)}$ via isogenies and isomorphisms of Edwards curves only: first
map $E_d \rightarrow E_{\bar{d}_{i}^{\pm 1}}$; second apply an
isomorphism to the relevant $E_{\bar{d}_{j}^{\pm 1}}$; and third use a dual isogeny
to map to $E_{\sigma(d)}$. However, since the Edwards $2$-isogenies implicitly depend
on the $2$-isogeny to $L_d$, the initial derivation given is perhaps the most
natural way to view these $4$-isogenies.

\subsection{Isomorphisms of $E_d$}\label{Eiso} 

It is clear from \S\ref{2isog} that the $E_{\bar{d}_{i}^{\pm 1}}$ curves
inherit isomorphisms from the isomorphisms of $L_d$, whereas $E_d$ inherits
isogenies from the isomorphisms of $L_d$ 
--- in both instances $L_d$ plays a fundamental role. A natural question is whether or not it is possible to exploit the isomorphisms between 
$E_{\bar{d}_{i}^{\pm 1}}$ to give the set of curves isomorphic to $E_d$?
Since the $j$-invariant of $E_d$ is~\cite{twisted}
\[
j_E(d) = \frac{16(d^2 + 14d +1)^3}{d(d-1)^4},
\]
it would not seem obvious how to determine the set of isomorphic curves of
$E_d$ from those of $L_d$. However, one can argue as follows. 
As above let $\delta = \bar{d}_1(d) = \left( \frac{1 + \sqrt{d}}{1 -
  \sqrt{d}}\right)^2$, with $\bar{d}_1$ considered as a function of
$d$. Observe that $d = (\bar{d}_{1}(\delta))^{-1}$, and hence
\[
E_d = E_{\left( \frac{1-\sqrt{\delta}}{1+\sqrt{\delta}} \right)^2}.
\]
Since the curve on the right-hand-side is isomorphic to $E_{\bar{d}_{1}^{\pm 1}(\delta)}$,
$E_{\bar{d}_{2}^{\pm 1}(\delta)}$ and $E_{\bar{d}_{3}^{\pm 1}(\delta)}$, so is
$E_d$. Writing these expressions out in full gives the following theorem.
\begin{theorem}\label{jinv}
Let $E_d$ and $E_{d'}$ be two Edwards curves. Then $E_{d} \cong E_{d'}$ if and
only if 
\[ 
d' \in \Bigg\{d,1/d, \left(\frac{1 \pm d^{1/4}}{1 \mp d^{1/4}}\right)^4,
\left(\frac{1 \pm \sqrt{-1}d^{1/4}}{1 \mp \sqrt{-1}d^{1/4}}\right)^4
\Bigg\}.
\]
\end{theorem}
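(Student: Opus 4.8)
The plan is to exploit the correspondence established in \S\ref{2isog}, namely that the Legendre curve $L_\delta$ is isomorphic to each of the six Edwards curves $E_{\bar{d}_i^{\pm 1}(\delta)}$ for $i \in \{1,2,3\}$. The argument then splits into two halves: first I would exhibit the six listed parameters as the values $\bar{d}_i^{\pm 1}(\delta)$ of a single Legendre parameter $\delta$ for which $E_d$ itself is one of the six images (the \emph{if} direction); and second I would use a degree count for the $j$-invariant map to argue that these six exhaust all Edwards curves isomorphic to $E_d$ (the \emph{only if} direction).

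For the forward direction, following the remark preceding the theorem, I would set $\delta = \bar{d}_1(d) = \left(\frac{1+\sqrt{d}}{1-\sqrt{d}}\right)^2$ and fix the compatible root $\sqrt{\delta} = \frac{1+\sqrt{d}}{1-\sqrt{d}}$. A direct substitution then yields $\bar{d}_1(\delta) = 1/d$, so that $\bar{d}_1^{-1}(\delta) = d$ and $E_d$ is literally the image $E_{\bar{d}_1^{-1}(\delta)}$. Since all six curves $E_{\bar{d}_i^{\pm 1}(\delta)}$ are isomorphic to $L_\delta$, they are mutually isomorphic, so $E_d \cong E_{\bar{d}_i^{\pm 1}(\delta)}$ for every $i$ and sign. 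It remains to express the remaining five images back in terms of $d$. Writing $v = d^{1/4}$, the identities $1-\delta = \frac{-4v^2}{(1-v^2)^2}$ and $\frac{\delta}{\delta-1} = \frac{(1+v^2)^2}{4v^2}$ feed into the definitions of $\bar{d}_2,\bar{d}_3$; the resulting numerators and denominators factor as perfect squares, for instance $(1-v^2)\pm 2\sqrt{-1}\,v = (1\pm\sqrt{-1}\,v)^2$, and everything collapses to $\bar{d}_2^{\pm1}(\delta) = \left(\frac{1\pm\sqrt{-1}\,d^{1/4}}{1\mp\sqrt{-1}\,d^{1/4}}\right)^4$ and $\bar{d}_3^{\pm1}(\delta) = \left(\frac{1\pm d^{1/4}}{1\mp d^{1/4}}\right)^4$. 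Together with $\bar{d}_1^{\pm1}(\delta) \in \{d, 1/d\}$ these recover exactly the six entries of the stated set.

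For the converse I would invoke the standard fact that, over the algebraic closure, two elliptic curves are isomorphic if and only if they share the same $j$-invariant, so that the parameters $d'$ with $E_{d'} \cong E_d$ are precisely the fibre $j_E^{-1}(j_E(d))$, with $j_E(d) = \frac{16(d^2+14d+1)^3}{d(d-1)^4}$. Regarded as a morphism $\PP \to \PP$, this rational function has degree $\max(6,5) = 6$, since its numerator and denominator are coprime of degrees $6$ and $5$. Hence a generic fibre contains exactly six points; having already produced six parameters in the forward direction, for generic $d$ these must be all of them, which gives the \emph{only if} direction.

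The hard part will be the converse at non-generic $d$: one must verify that the six produced values are genuinely distinct for generic $d$ (so the fibre is not overcounted), that $j_E$ is separable of degree six in the relevant characteristic, and that at the finitely many exceptional values of $d$ — where the fourth roots degenerate, where some of the six values coincide, or where $j_E$ ramifies — the set equality asserted in the theorem still holds. These coincidences can be handled either by direct substitution into $j_E$ or by observing that the claimed set is manifestly stable under $d \mapsto 1/d$ and under the symmetries $d \mapsto \bar{d}_i(d)$, so that any collisions collapse both sides identically without introducing spurious parameters.
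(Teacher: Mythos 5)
Your forward direction coincides with the paper's: set $\delta=\bar{d}_1(d)$, check $d=(\bar{d}_1(\delta))^{-1}$, and unwind the six values $\bar{d}_i^{\pm 1}(\delta)$ in terms of $v=d^{1/4}$; your intermediate identities ($1-\delta=-4v^2/(1-v^2)^2$, the factorisation $(1-v^2)\pm 2\sqrt{-1}\,v=(1\pm\sqrt{-1}\,v)^2$, and the resulting assignment of the $\sqrt{-1}$-values to $\bar{d}_2$ and the real ones to $\bar{d}_3$) all check out. Where you genuinely diverge is the converse. The paper does not argue via the degree of $j_E$; it gets exhaustiveness from the Legendre side --- the twelve maps $\theta_{\sigma(d),\pm}$ of \S\ref{2isog} produce exactly these six Edwards curves isomorphic to $L_\delta$ --- and cross-checks the list against Proposition 6.1 of Edwards' paper, whose $24$ values collapse to these six upon taking fourth powers. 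Your route, that $j_E$ is a morphism $\PP\to\PP$ of degree $\max(6,5)=6$ because $(d^2+14d+1)^3$ and $d(d-1)^4$ are coprime in odd characteristic, so a fibre of the $j$-line has at most six geometric points, is more self-contained and avoids the citation; what it costs you are precisely the checks you defer. Two of these are routine: the six values are distinct rational functions of $v$, hence distinct for all but finitely many $d$; and separability is automatic except possibly for $p=3$ (the inseparable degree is a power of $p$ dividing $6$), where $j_E=16(d+1)^6/(d(d-1)^4)$ is visibly not a rational function of $d^3$. The one place your sketch is genuinely thin is the finitely many $d$ at which the listed values collide (essentially $j_E(d)\in\{0,1728\}$ and $d=-1$): there the listed set may account for fewer than six points of the fibre counted with multiplicity, so the degree bound alone no longer excludes an extra isomorphic parameter, and observing that the set is stable under $d\mapsto 1/d$ and the $\bar{d}_i$ does not by itself close this; those cases need a direct multiplicity or substitution check, which is exactly the work the paper sidesteps by leaning on Edwards' enumeration.
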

These six values are naturally implied by Proposition 6.1 of Edwards original exposition~\cite{edwards}.
In particular curve (1) is isomorphic to curve (2) via the map $(x,y) \mapsto (ax,ay)$, with $d = a^4$.
Taking the fourth power of each of the $24$ values given in Edwards' proposition gives the six values listed in Theorem~\ref{jinv}.
It is however interesting that these values can be determined from the isomorphisms of $L_d$ alone.
The above manipulations also show that $E_d \cong L_{\delta}$, via 
\[
(x,y) \mapsto  \left( \frac{\sqrt{d} + 1}{\sqrt{d} - 1}\cdot \frac{1+y}{1-y},
\frac{2\sqrt{-1}(1+\sqrt{d})}{(1-\sqrt{d})^2}\cdot\frac{1+y}{x(1-y)} \right).
\]
Note that the existence of such an isomorphism is implied by the fact that $j_L(\delta) = j_E(d)$.


\section{The number of isogeny classes of Edwards curves over finite fields}

In this section we derive some results about Edwards
curves~(\ref{edwards}), from results known for the Legendre family of elliptic
curves, which is well-studied. Having established the isogeny between $E_d$ and $L_d$ in
Theorem~\ref{2isogeny}, the validity of this approach is immediate.
In particular we determine the number of isogeny classes of Edwards
curve over the finite field $\fq$, and in the course of doing so also detail
the number of supersingular curves $E_d(\fq)$. 

For the Legendre curve $L_d(\fq)$, 
we denote the trace of the Frobenius endomorphism 
\begin{equation}
-\sum_{x\in\F_q}\eta(x(x-1)(x-d))
\end{equation}
by $A(d,\fq)$. Then Equation~(\ref{Legendre-order}) implies
\begin{equation}\label{Leg-order-2}
\#L_d(\fq)=q+1-A(d,\fq),
\end{equation}
and by the Hasse-Weil bound we have
$$
\left|A(d,\fq)\right|\le 2\sqrt{q}.
$$
Thus the number of isogeny classes of the Legendre family of elliptic curves 
is the same as the number of integer values of $A$ with 
$\left|A\right|\le 2\sqrt{q}$
for which there is a $d$ such that $A(d,\fq)=A$. The following two lemmata
give a satisfactory answer to this question. The first addresses the number 
of ordinary isogeny classes and the second addresses the supersingular isogeny classes.
\begin{lemma}\cite{Katz-ratio}\label{ordinary}
Let $\fq$ be a finite field of odd characteristic, and let $A\in\Z$ 
be an integer prime to $p$
(the characteristic of $\fq$) with $\left|A\right|\le 2\sqrt{q}$.
If $A\equiv q+1 \pmod 4$, then there exists $d\in\fq\backslash\{0,1\}$
with $A(d,\fq)=A$. 
\end{lemma}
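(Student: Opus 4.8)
The plan is to realize the prescribed trace by an elliptic curve with full rational $2$-torsion and then put that curve into genuine Legendre form. The starting point is the observation that the lemma's congruence is forced: since $L_d$ always contains the four rational $2$-torsion points $\cO,(0,0),(1,0),(d,0)$, every Legendre curve has $4\mid\#L_d(\fq)$, i.e. $A(d,\fq)\equiv q+1\pmod 4$, so the real claim is that this necessary condition is also sufficient among ordinary traces. Note at the outset that, as $q$ is odd, the hypothesis $A\equiv q+1\pmod 4$ forces both $A$ to be even and $q+1-A\equiv 0\pmod 4$; also $p\nmid A$ together with $|A|\le 2\sqrt q$ makes the class ordinary, with $A^2-4q<0$.

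First I would invoke the theory of Deuring (in the refined form of Waterhouse and Schoof) for the fixed ordinary Weil number $\pi=\tfrac12\big(A+\sqrt{A^2-4q}\big)$, which satisfies $\pi\bar\pi=q$, $\pi+\bar\pi=A$ and generates an imaginary quadratic field $K=\Q(\pi)$: for every order $\cO$ with $\Z[\pi]\subseteq\cO\subseteq\cO_K$ there is an elliptic curve $E/\fq$ with $\mathrm{End}(E)=\cO$ and Frobenius $\pi$, hence with trace $A$. The key point is that $\omega:=\tfrac12(\pi-1)$ satisfies $\omega^2+\tfrac{2-A}{2}\omega+\tfrac{q+1-A}{4}=0$, so the two conditions ``$A$ even'' and ``$4\mid q+1-A$'' — both supplied by $A\equiv q+1\pmod 4$ — are exactly what make $\omega$ an algebraic integer. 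Taking $\cO=\Z[\omega]$, which is then a genuine order containing $\Z[\pi]$, Deuring's theorem yields $E/\fq$ with $\mathrm{End}(E)=\Z[\omega]$; since $\omega\in\mathrm{End}(E)$ we have $\pi\equiv 1\pmod{2\,\mathrm{End}(E)}$, so Frobenius acts trivially on $E[2]$ and $E$ has full rational $2$-torsion. Consequently $E$ is isomorphic over $\fq$ to $y^2=x(x-a)(x-b)$ for distinct nonzero $a,b\in\fq$.

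Next I would convert such a model into genuine Legendre form. Sending one root to $0$ and another to $1$ rescales the cubic's leading coefficient by one of the six differences $\pm a,\pm b,\pm(a-b)$, and the result is an honest Legendre curve $L_d$ — rather than its quadratic twist, which would carry trace $-A$ — precisely when the chosen difference is a square in $\fq$; here $d\ne 0,1$ automatically since $E$ is nonsingular. A short check shows that replacing $E$ by its quadratic twist $E^u$ divides every difference by a fixed nonsquare $u$, so at least one of $E,E^u$ is always genuinely Legendre. When $q\equiv -1\pmod 4$ this settles the matter: $-1$ is a nonsquare, so exactly one of each pair $\pm s$ is a square, whence a square difference of $E$ itself exists and $E\cong L_d$ with $A(d,\fq)=A$.

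The remaining and genuinely harder case is $q\equiv 1\pmod 4$, which I expect to be the main obstacle. Here $-1$ is a square and it can happen that all of $a,b,a-b$ are nonsquares; then $E$ is a twist of a Legendre curve and the naive argument exhibits only a Legendre curve of trace $-A$. Because the sign of the trace is an isogeny invariant, no isogeny can repair this, so the real task is to show the trace-$A$ isogeny class itself contains a genuine Legendre representative. I would resolve this by passing through the three $2$-isogenous curves of $E$, which again have full rational $2$-torsion with new root-differences governed by $\sqrt{ab}$ and $a\pm b$, in combination with the explicit $\fq$-isomorphisms of the orbit $\{L_d,L_{1-d},L_{1/d},\dots\}$ recorded in maps~(\ref{s1})--(\ref{s121}), to exhibit a curve in the class with a nonsquare difference. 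The cleaner route, and the one adopted from \cite{Katz-ratio}, is to invoke Katz's Deuring-style class number formula for the count $\#\{d\in\fq\setminus\{0,1\}:A(d,\fq)=A\}$ and to note that the order $\Z[\omega]$ constructed above makes this count positive exactly under the congruence $A\equiv q+1\pmod 4$.
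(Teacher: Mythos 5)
The paper does not actually prove this lemma --- it is quoted from Katz~\cite{Katz-ratio} --- so your attempt has to stand on its own. The first half of your argument is correct and complete: the congruence $A\equiv q+1\pmod 4$ is exactly what makes $\omega=(\pi-1)/2$ an algebraic integer, Deuring--Waterhouse then supplies a curve $E/\F_q$ with $\mathrm{End}(E)=\Z[\omega]$ and trace $A$, the relation $\pi=1+2\omega$ forces full rational $2$-torsion, and for $q\equiv 3\pmod 4$ the fact that $-1$ is a nonsquare guarantees that one of the six root-differences $e_j-e_i$ is a square, so the rescaling lands on a genuine Legendre curve $L_d$ with $d\ne 0,1$ and $A(d,\F_q)=A$. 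That much is a clean, self-contained derivation of the $q\equiv 3\pmod 4$ case.

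The gap is the case $q\equiv 1\pmod 4$, which you correctly flag but do not close. There the curve $E$ you construct may have all of $a$, $b$, $a-b$ nonsquare, in which case only its quadratic twist --- of trace $-A$ --- is in Legendre form; and since both $A$ and $-A$ satisfy the hypothesis (both are $\equiv 2\pmod 4$ when $q\equiv 1\pmod 4$), the lemma genuinely requires a Legendre representative in \emph{each} of the two classes, so no symmetry between $A$ and $-A$ lets you off the hook. Your first proposed repair, walking along the three $2$-isogenies of $E$, is only a sketch: you would need to verify that the quotients again have full rational $2$-torsion (this does hold when $a$ and $b$ are both nonsquares, since then $ab$ is a square, but you do not say so), and, more seriously, that some curve reachable this way acquires a square difference --- which is essentially the content of the volcano argument of Morain cited in the paper and is not a one-line check; it is also not obvious that a single $2$-isogeny step suffices rather than an iteration. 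Your second proposed repair, invoking Katz's Deuring-style class number formula for $N(A)$ and observing that it is positive, is not a proof but a restatement of the result being cited: the positivity of that count for every unobstructed $A$ \emph{is} Lemma~\ref{ordinary}. So as written the proposal establishes the lemma only for $q\equiv 3\pmod 4$, and for $q\equiv 1\pmod 4$ it reduces to the reference the paper already gives.
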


\begin{lemma}\cite{Katz-ratio}\label{supersingular}
Let $p$ be an odd prime. Then we have the following assertions. 
\begin{itemize}
\item[(i)] If $q=p^{2k+1}$, and $L_d(\fq)$ is 
supersingular, then $A(d,\fq)=0$.
\item[(ii)] If $q=p^{2k}$, and $L_d(\fq)$ is 
supersingular, then $A(d,\fq)=\epsilon 2p^k$, where $\epsilon=\pm 1$ is 
the choice of sign for which $\epsilon p^k\equiv 1 \pmod 4$.
\end{itemize}
\end{lemma}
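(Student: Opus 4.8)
The plan is to combine two standard inputs: the classification of Frobenius traces of supersingular elliptic curves (Waterhouse/Schoof), and the congruence forced on $A(d,\fq)$ by the fact that $L_d$ carries full rational $2$-torsion. Recall that $L_d:y^2=x(x-1)(x-d)$ has the three affine $2$-torsion points $(0,0),(1,0),(d,0)$, so $4\mid \#L_d(\fq)=q+1-A(d,\fq)$ and hence $A(d,\fq)\equiv q+1\pmod 4$. Recall also that $E$ is supersingular exactly when $p\mid A$, and that by Waterhouse the only traces realized by supersingular curves over $\F_{p^n}$ are $A=0$, together with $A=\pm p^{(n+1)/2}$ for $n$ odd and $p\in\{2,3\}$, and $A=\pm p^{n/2}$ and $A=\pm 2p^{n/2}$ for $n$ even (subject to congruence conditions on $p$). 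I would run both parts off these two facts.

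For part (i), with $q=p^{2k+1}$, a supersingular Frobenius has both Newton slopes equal to $1/2$, so $v_p(\pi)=v_p(\bar{\pi})=(2k+1)/2$. If $A=\pi+\bar{\pi}\neq 0$ then $v_p(A)\ge k+1$, whence $|A|\ge p^{k+1}$; combined with $|A|\le 2\sqrt q=2p^{k+1/2}$ this forces $p^{1/2}\le 2$, i.e. $p\le 3$. For $p\ge 5$ this already gives $A=0$. For $p=3$ the only surviving candidate is $A=\pm 3^{k+1}$; but then $q=3^{2k+1}\equiv -1\pmod 4$, so the $2$-torsion congruence demands $A\equiv 0\pmod 4$, which an odd number such as $\pm 3^{k+1}$ cannot meet. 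Hence $A(d,\fq)=0$.

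For part (ii), with $q=p^{2k}$ one has $q\equiv 1\pmod 4$, so the $2$-torsion congruence reads $A\equiv 2\pmod 4$. This discards $A=0$ (which is $\equiv 0$) and $A=\pm p^k$ (which is odd), leaving only $A=\pm 2p^k$. To pin down the sign, write $A=2sp^k$ with $s=\pm1$; the characteristic polynomial $(T-sp^k)^2$ of Frobenius and the absence of nilpotents in $\mathrm{End}$ force $\pi=sp^k$ to be a scalar, so $L_d(\fq)=\ker(\pi-1)=E[\,p^k-s\,]\cong(\Z/(p^k-s)\Z)^2$ and $\#L_d(\fq)=(p^k-s)^2$. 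Exactly one of $p^k\mp 1$ is divisible by $4$, namely $p^k-\epsilon$ with $\epsilon p^k\equiv 1\pmod 4$; thus the claim $s=\epsilon$ is equivalent to $4\mid p^k-s$, i.e. to $16\mid \#L_d(\fq)$, i.e. to $L_d$ having \emph{full} rational $4$-torsion rather than just the $2$-torsion it visibly has.

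The main obstacle is precisely this last equivalence. By the standard halving criterion, $(1,0)\in 2L_d(\fq)$ iff $1-d$ is a square and $(d,0)\in 2L_d(\fq)$ iff $d$ and $d-1$ are squares; since $q\equiv 1\pmod4$ gives $\chi_2(-1)=1$, the whole question collapses to showing $\chi_2(d)=\chi_2(1-d)=1$. Here I would reduce to the base field $\F_{p^2}$: supersingular $\lambda$-invariants lie in $\F_{p^2}$ (Igusa), so $d,1-d\in\F_{p^2}$, $L_d$ is defined over $\F_{p^2}$, and since $\pi_{\F_{p^{2k}}}=\pi_{\F_{p^2}}^{\,k}$ the correct sign propagates from $k=1$ (note $(\epsilon p)^k\equiv 1\pmod 4$). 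Over $\F_{p^2}$ one has $\chi_2(d)=\chi_2^{\F_p}\!\big(N_{\F_{p^2}/\F_p}(d)\big)$, so everything comes down to showing that the norm of a supersingular Legendre parameter is a quadratic residue mod $p$ — a property of the roots of the Deuring (Hasse invariant) polynomial $H_p(\lambda)=\sum_i \binom{(p-1)/2}{i}^2\lambda^i$. I expect establishing this norm statement (equivalently, that every supersingular $L_d$ has order divisible by $16$) to be the genuine crux, the delicate point underlying Katz's result, whereas all the reductions above are elementary.
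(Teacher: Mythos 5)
The paper does not actually prove this lemma --- it is quoted from Katz with a citation --- so the only meaningful comparison is with the internal completeness of your argument. Part (i) is correct and complete: the Newton-slope bound $v_p(A)\ge k+1$ for nonzero $A$ together with the Hasse bound eliminates $p\ge 5$, and for $p=3$ the surviving candidate $A=\pm 3^{k+1}$ is odd, whereas the three rational $2$-torsion points of $L_d$ force $4\mid q+1-A$ with $q+1\equiv 0\pmod 4$. The first half of part (ii) is also sound: $A\equiv q+1\equiv 2\pmod 4$ discards $0$ and $\pm p^k$, the scalar-Frobenius argument gives $\#L_d(\F_q)=(p^k-s)^2$, and your observation that $s=\epsilon$ is equivalent to $16\mid\#L_d(\F_q)$, hence (by $2$-descent, cf.\ Table 1) to $\chi_2(d)=\chi_2(1-d)=1$, is a correct reduction.

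But the proof then stops at exactly the step that carries all of the content of part (ii). You reduce the sign determination to the assertion that every supersingular Legendre parameter $d$ over $\F_{p^{2k}}$ has both $d$ and $1-d$ square, propose to verify it via norms of roots of the Deuring polynomial, and explicitly concede that this ``genuine crux'' is not established. That is a real gap, not a routine omission: nothing earlier in your argument supplies the missing fact, and the quadratic-residuosity of the roots of $H_p$ is not a standard citable statement. A clean way to close it, without touching the Deuring polynomial, is the device this paper itself recalls in \S 6.1: $L_d$ is $2$-isogenous over $\F_q$ to $y^2=(x+t)(x^2+x+t)$ with $t=(1-d)/4$, which carries the rational point $(0,t)$ of order $4$. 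Since an $\F_q$-isogeny intertwines the Frobenii, the isogenous curve also has scalar Frobenius $sp^k$ and group $(\Z/(p^k-s)\Z)^2$; the presence of a rational point of order $4$ in such a group forces $4\mid p^k-s$, i.e.\ $s=\epsilon$. With that substitution your argument would be complete.
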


Following Katz, we say that each $A$ satisfying the conditions of
Lemma~\ref{ordinary} is {\em unobstructed}, for $q$. From the two lemmata above, the following is immediate.
\begin{cor}
If $q=p^{2k+1}$ and $p\equiv 1 \pmod 4$, then the number of isogeny classes
of Edwards curves over $\fq$ is 
$$
2\left\lfloor\frac{\lfloor 2\sqrt{q}\rfloor+2}{4}\right\rfloor-
2\left\lfloor\frac{\left\lfloor\frac{\lfloor 2\sqrt{q}\rfloor}{p}\right\rfloor+2}{4}\right\rfloor.
$$

\end{cor}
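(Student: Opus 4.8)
The plan is to reduce the problem to counting admissible Frobenius traces for the Legendre family and then enumerate those traces by elementary means. First I would apply Theorem~\ref{mainisog}: since $E_d \sim L_d$ for every $d \in \fq \setminus \{0,1\}$ and isogeny is an equivalence relation, two Edwards curves $E_{d_1}$ and $E_{d_2}$ are isogenous over $\fq$ if and only if $L_{d_1} \sim L_{d_2}$, that is, by Tate's theorem, if and only if $A(d_1,\fq) = A(d_2,\fq)$. Hence the number of isogeny classes of Edwards curves over $\fq$ equals the number of distinct integers arising as $A(d,\fq)$ for admissible $d$.

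Next I would determine exactly which traces arise. Every Legendre curve $L_d : y^2 = x(x-1)(x-d)$ carries full rational $2$-torsion, namely $(0,0), (1,0), (d,0)$ and $\mathcal{O}$, which are distinct because $d \ne 0,1$; therefore $4 \mid \#L_d(\fq) = q + 1 - A(d,\fq)$, forcing $A(d,\fq) \equiv q+1 \pmod 4$. Since $p \equiv 1 \pmod 4$ and $q = p^{2k+1}$ we have $q \equiv 1 \pmod 4$, so this reads $A \equiv 2 \pmod 4$. By Lemma~\ref{ordinary}, every integer $A$ with $|A| \le 2\sqrt{q}$, $p \nmid A$ and $A \equiv 2 \pmod 4$ is realized by some $d$. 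It remains to exclude a supersingular class: by Lemma~\ref{supersingular}(i) any supersingular $L_d$ over $\fq = \F_{p^{2k+1}}$ has $A = 0$, but $0 \not\equiv 2 \pmod 4$ contradicts the $2$-torsion congruence, so no supersingular Legendre curve exists in this case. Thus the realized traces are precisely the integers $A$ with $|A| \le 2\sqrt{q}$, $A \equiv 2 \pmod 4$ and $p \nmid A$.

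Finally I would count these. Put $N = \lfloor 2\sqrt{q}\rfloor$. The integers $A$ with $|A| \le N$ and $A \equiv 2 \pmod 4$ number $2\lfloor (N+2)/4\rfloor$, by symmetry about $0$ (which is excluded, being $\equiv 0 \pmod 4$). To enforce $p \nmid A$ I would subtract the multiples of $p$ among them: writing $A = pB$ and using $p \equiv 1 \pmod 4$ (so that $pB \equiv B \pmod 4$) together with $|A| \le N \Leftrightarrow |B| \le \lfloor N/p\rfloor$, these correspond bijectively to $B$ with $B \equiv 2 \pmod 4$ and $|B| \le \lfloor N/p \rfloor$, of which there are $2\lfloor (\lfloor N/p\rfloor + 2)/4\rfloor$. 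Subtracting yields exactly the stated expression.

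The hard part will be the supersingular exclusion, since a priori the value $A = 0$ could contribute an additional class. The clean device is the full rational $2$-torsion of $L_d$: it pins the trace into the residue class $A \equiv q+1 \equiv 2 \pmod 4$, which is incompatible with the supersingular value $A = 0$ supplied by Lemma~\ref{supersingular}(i). Once this is in place the remainder is routine floor-function bookkeeping, where I would check only that the two counts align and that $p \equiv 1 \pmod 4$ is precisely what lets the congruence pass unchanged from $A$ to $B$.
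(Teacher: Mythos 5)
Your proposal is correct and follows essentially the same route as the paper: the key step in both is to rule out supersingular curves by noting that Lemma~\ref{supersingular}(i) would force $\#L_d(\fq)=q+1\equiv 2\pmod 4$, contradicting $4\mid\#L_d(\fq)$ from the full rational $2$-torsion, after which Lemma~\ref{ordinary} and the floor-function count give the formula. Your write-up merely makes explicit the trace-enumeration bookkeeping that the paper leaves implicit.
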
 
\begin{proof}
The claim will follow if we prove that there is no supersingular Legendre curve
in this case. Observe that $\#L_d(\fq)$ is always divisible by 4, and if $q=p^{2k+1}$, $p\equiv 1 \pmod 4$
and $L_d(\fq)$ is supersingular, then from  
Lemma~\ref{supersingular}(i) and~(\ref{Leg-order-2}) it follows that  $\#L_d\equiv 2\pmod 4$, which is impossible. 
\end{proof}

In order to obtain the number of isogeny classes of Edwards curves in the remaining cases
we need to know how the supersingular Legendre curve parameters are
distributed amongst extensions of the prime subfield $\F_p$ of $\F_q$; again, 
there is already a complete answer to this question in the literature. On the one hand, it is 
well known that $L_d(\fq)$ is a supersinular curve if and only if $d$ is a root of 
the Hasse-Deuring polynomial
\[
H_p(x)=(-1)^{\frac{p-1}{2}}\sum_{i=0}^{\frac{p-1}{2}}{{(p-1)/2}\choose{i}}^2x^i,
\]   
and on the other hand it is well known that all the roots of Deuring polynomial are in $\F_{p^2}$
(see for example \cite[Proposition~2.2]{Auer-Top}).
Using Theorem~\ref{2isogeny} and \cite[Proposition~3.2]{Auer-Top} the following is immediate.

\begin{theorem}\label{supersingular-p}
The number $S_p$ of $\F_p$-rational roots of the Deuring polynomial, or equivalently the number
of supersingular Edwards curves over $\F_p$, satisfies
\begin{itemize}
\item[(i)] $S_p=0$ if and only if $p\equiv 1 \pmod 4$.

\item[(ii)] $S_3=1$.

\item[(iii)] If $p\equiv 3\pmod 4$ and $p> 3$, then $S_p=3h(-p)$, where $h(-p)$ is the 
class number of $\QQ(\sqrt{-p})$.

\end{itemize} 
\end{theorem}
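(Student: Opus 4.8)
The plan is to reduce everything to facts about the Legendre family, where the supersingular parameters are governed by the Deuring polynomial, and then to invoke the class-number count of Auer and Top. First I would record the equivalence asserted in the statement. By Theorem~\ref{2isogeny} the curve $E_d$ is $2$-isogenous to $L_d$ over $\F_p$ whenever $d \in \F_p \setminus \{0,1\}$, and supersingularity is invariant under isogeny; hence $E_d(\F_p)$ is supersingular precisely when $L_d(\F_p)$ is, i.e. precisely when $d$ is a root of $H_p(x)$. Since the excluded values $0$ and $1$ are never roots of $H_p$ (one checks $H_p(0) = (-1)^{(p-1)/2}$, and by Vandermonde's identity together with $\binom{p-1}{(p-1)/2}\equiv(-1)^{(p-1)/2}\pmod p$ that $H_p(1) \equiv 1 \pmod p$), counting supersingular parameters $d \in \F_p$ for $E_d$ is the same as counting $\F_p$-rational roots of $H_p$, which is $S_p$.

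Next, for part (i) I would argue the ``if'' direction directly. Taking $q = p = p^1$, an odd power, Lemma~\ref{supersingular}(i) forces $A(d,\F_p) = 0$ for any supersingular $L_d$, so by~(\ref{Leg-order-2}) one has $\#L_d(\F_p) = p+1$. As $\#L_d(\F_p)$ is always divisible by $4$ (from the three rational $2$-torsion points), a supersingular $L_d$ over $\F_p$ would force $p + 1 \equiv 0 \pmod 4$, i.e. $p \equiv 3 \pmod 4$. Thus $p \equiv 1 \pmod 4$ leaves no supersingular parameter and $S_p = 0$. Part (ii) I would dispatch by direct computation: for $p=3$ the Deuring polynomial is $H_3(x) = -(1+x)$, whose unique root $x = 2 \in \F_3$ lies in $\F_3 \setminus \{0,1\}$, giving $S_3 = 1$.

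Finally, part (iii) and the remaining (``only if'') direction of (i) I would deduce from \cite[Proposition~3.2]{Auer-Top}, which counts the $\F_p$-rational supersingular Legendre parameters as $3h(-p)$ when $p \equiv 3 \pmod 4$ and $p > 3$; since $h(-p) \geq 1$ this simultaneously shows $S_p > 0$ for every $p \equiv 3 \pmod 4$, which together with part (ii) completes the equivalence claimed in (i). The main obstacle, were one to prove (iii) from first principles rather than cite it, is precisely this class-number identity: one must translate the count of $\F_p$-rational points on the $\lambda$-line lying above supersingular $j$-invariants into the class number of $\QQ(\sqrt{-p})$, via Deuring's correspondence together with the degree-$6$ structure of the $\lambda \mapsto j$ cover (the factor $3$ reflecting how the six-element orbit of a supersingular $\lambda$ distributes between $\F_p$ and $\F_{p^2}$). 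Delegating this to Auer and Top makes the deduction immediate.
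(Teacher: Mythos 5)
Your proposal is correct and follows essentially the same route as the paper, which derives the theorem by transferring supersingularity from $E_d$ to $L_d$ via the $2$-isogeny of Theorem~\ref{2isogeny} and then citing \cite[Proposition~3.2]{Auer-Top} for the class-number count. The extra details you supply (the non-vanishing of $H_p$ at $0$ and $1$, the direct divisibility-by-$4$ argument for $p\equiv 1\pmod 4$, and the computation of $H_3$) are all accurate and merely flesh out what the paper leaves as ``immediate.''
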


\begin{cor}
If $p\equiv 3 \pmod 4$ and $q=p^{2k+1}$, then the number of isogeny classes
of Edwards curves over $\fq$ is 
$$
2\left\lfloor\frac{\lfloor 2\sqrt{q}\rfloor}{4}\right\rfloor-
2\left\lfloor\frac{\lfloor 2\sqrt{q}\rfloor}{4p}\right\rfloor+1.
$$
\end{cor}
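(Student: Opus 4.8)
The plan is to mirror the proof of the preceding corollary, the essential difference being that here the supersingular isogeny class \emph{is} present and must be counted. By Theorem~\ref{mainisog} the isogeny classes of Edwards curves over $\fq$ are in bijection with those of Legendre curves, so it suffices to count the distinct values of the trace $A(d,\fq)$ as $d$ ranges over $\fq \setminus \{0,1\}$. I would split these into the ordinary values ($p \nmid A$) and the single supersingular value ($p \mid A$), count each, and add.

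For the ordinary classes, first note that since $p \equiv 3 \pmod 4$ and $q = p^{2k+1}$ one has $q \equiv 3 \pmod 4$, so $q+1 \equiv 0 \pmod 4$. An ordinary trace $A$ satisfies $|A| \le 2\sqrt q$ (Hasse--Weil), $p \nmid A$ (by the definition of ordinary), and $A \equiv q+1 \equiv 0 \pmod 4$ (because $\#L_d = q+1-A$ is always divisible by $4$, as observed in the previous corollary); conversely, Lemma~\ref{ordinary} shows every such $A$ is realised by some $d$, and since $p \nmid A$ that curve is ordinary. Hence the ordinary classes correspond exactly to the integers $A$ with $|A| \le N := \lfloor 2\sqrt q\rfloor$, $A \equiv 0 \pmod 4$, and $p \nmid A$. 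Writing $A = 4j$, the multiples of $4$ in $[-N,N]$ number $2\lfloor N/4\rfloor + 1$; since $p$ is odd, $\gcd(4,p)=1$, so those among them that are divisible by $p$ are precisely the multiples of $4p$, numbering $2\lfloor N/(4p)\rfloor + 1$. Subtracting gives $2\lfloor N/4\rfloor - 2\lfloor N/(4p)\rfloor$ ordinary classes (this subtraction also correctly removes $A=0$, which is supersingular rather than ordinary).

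It then remains to show that the single supersingular class $A = 0$ of Lemma~\ref{supersingular}(i) is actually attained. By Theorem~\ref{supersingular-p}, for $p \equiv 3 \pmod 4$ we have $S_p > 0$ (namely $S_3 = 1$, and $S_p = 3h(-p) \ge 3$ for $p > 3$), so there is a supersingular Legendre parameter $d \in \F_p$. As $\F_p \subseteq \fq$ when $q = p^{2k+1}$, the curve $L_d$ is defined over $\fq$ and remains supersingular, and by Lemma~\ref{supersingular}(i) its trace over $\fq$ is $0$. Thus $A = 0$ contributes exactly one further class, giving the total $2\lfloor N/4\rfloor - 2\lfloor N/(4p)\rfloor + 1$, as claimed.

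The only genuine subtlety, and the step I would treat most carefully, is confirming the existence of the supersingular class: one must check both that a supersingular $L_d$ exists over $\fq$ (guaranteed by $S_p > 0$ together with $\F_p \subseteq \fq$) and that, unlike the $p \equiv 1 \pmod 4$ case, no contradiction with the constraint $4 \mid \#L_d$ arises. Here $q+1 \equiv 0 \pmod 4$, so $\#L_d = q+1-0$ is divisible by $4$ and the obstruction exploited in the previous corollary disappears; this is exactly what produces the extra $+1$ in the formula. The remaining combinatorial bookkeeping of counting residues modulo $4$ and $4p$ is routine.
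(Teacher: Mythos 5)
Your proposal is correct and follows essentially the same route as the paper: the paper's own (very terse) proof likewise combines the count of unobstructed ordinary traces $A\equiv q+1\equiv 0\pmod 4$ from Lemma~\ref{ordinary} with the observation, via Lemma~\ref{supersingular} and Theorem~\ref{supersingular-p}, that there is exactly one supersingular class (with $A=0$), contributing the $+1$. You have merely made explicit the residue-counting and the existence of a supersingular parameter in $\F_p\subseteq\F_q$, both of which the paper leaves implicit.
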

\begin{proof}
From Lemma~\ref{supersingular} and Theorem~\ref{supersingular-p} it follows that
there is a single isogeny class of supersingular Legendre curves in this case.
\end{proof}

Similarly we have:
\begin{cor}
If $q=p^{2k}$ for an odd prime $p$, then the number of isogeny classes
of Edwards curves over $\fq$ is 
$$
2\left\lfloor\frac{\lfloor 2\sqrt{q}\rfloor+2}{4}\right\rfloor-
2\left\lfloor\frac{\left\lfloor\frac{\lfloor 2\sqrt{q}\rfloor}{p}\right\rfloor+2}{4}\right\rfloor+1.
$$
\end{cor}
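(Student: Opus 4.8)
The plan is to exploit Theorem~\ref{mainisog}, which gives $E_d(\fq)\sim L_d(\fq)$, so that counting isogeny classes of Edwards curves over $\fq$ is the same as counting the distinct values of the trace $A(d,\fq)$ attained by Legendre curves as $d$ ranges over $\fq\setminus\{0,1\}$. I would separate the attainable traces into the ordinary ones ($p\nmid A$) and the supersingular ones ($p\mid A$), count each, and add. Throughout I would use that $q=p^{2k}\equiv 1\pmod 4$ (being an even power of an odd prime), so that $q+1\equiv 2\pmod 4$ and the unobstructedness condition $A\equiv q+1\pmod 4$ of Lemma~\ref{ordinary} reads simply $A\equiv 2\pmod 4$.

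For the ordinary classes I would first argue that the attainable ordinary traces are \emph{exactly} the integers $A$ with $|A|\le 2\sqrt q=2p^k$, $p\nmid A$, and $A\equiv 2\pmod 4$. Sufficiency is Lemma~\ref{ordinary}; for necessity I would combine the Hasse--Weil bound, the fact that $p\nmid A$ characterises ordinarity, and the observation (already used for $q=p^{2k+1}$) that $\#L_d(\fq)=q+1-A$ is always divisible by $4$, which forces $A\equiv q+1\equiv 2\pmod 4$. Counting these is then the elementary computation underlying the earlier corollaries: writing $N=\lfloor 2\sqrt q\rfloor=2p^k$, the number of $A\equiv 2\pmod 4$ with $|A|\le N$ is $2\lfloor (N+2)/4\rfloor$, and among these the ones with $p\mid A$ are parametrised by $A=pB$ with $B\equiv 2\pmod 4$ (using that $p$ is odd) and $|B|\le\lfloor N/p\rfloor$, of which there are $2\lfloor(\lfloor N/p\rfloor+2)/4\rfloor$. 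Subtracting yields the first two terms of the stated formula.

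It remains to show that the supersingular curves contribute exactly $+1$. Here I would invoke Lemma~\ref{supersingular}(ii): every supersingular $L_d(\fq)$ has the single trace $A=\epsilon 2p^k$ with $\epsilon p^k\equiv 1\pmod 4$, so all supersingular Legendre curves over $\fq$ lie in one isogeny class. To see this class is nonempty I would use that the roots of the Deuring polynomial $H_p$ all lie in $\F_{p^2}$, together with $\deg H_p=(p-1)/2\ge 1$, so at least one supersingular parameter $d$ lies in $\F_{p^2}\subseteq\F_{p^{2k}}=\fq$; note this holds for every odd $p$, even when $p\equiv 1\pmod 4$ (where Theorem~\ref{supersingular-p} gives no $\F_p$-rational such $d$). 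Finally this supersingular trace is disjoint from the ordinary ones, since $p^k\mid\epsilon 2p^k$ gives $p\mid A$, so no class is counted twice; adding the single supersingular class produces the final $+1$.

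The main obstacle is the supersingular bookkeeping rather than the counting. One must be sure the value $\epsilon 2p^k$ is genuinely attained over $\fq$ (guaranteed precisely because $2k$ is even, so that $\F_{p^2}\subseteq\fq$) and that Lemma~\ref{supersingular}(ii) pins \emph{all} supersingular curves to this one trace, so the contribution is exactly one class. A minor check worth recording is that although $\epsilon 2p^k\equiv 2\pmod 4$ as well (and lies on the Hasse--Weil boundary $|A|=2\sqrt q$), it carries a factor of $p$ and is therefore already removed by the second floor term in the ordinary count; adding $1$ then restores exactly this one legitimate supersingular class with no double counting.
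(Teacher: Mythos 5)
Your proof is correct and follows essentially the same route as the paper: reduce to Legendre curves via Theorem~\ref{mainisog}, count the unobstructed ordinary traces $A\equiv q+1\equiv 2\pmod 4$ with $p\nmid A$ via the floor-function computation, and add one supersingular class, justified exactly as the paper does by Lemma~\ref{supersingular}(ii) together with the fact that the roots of the Hasse--Deuring polynomial lie in $\F_{p^2}\subseteq\F_{p^{2k}}$. You simply make explicit the arithmetic bookkeeping (the $2\pmod 4$ parametrisation, the removal of multiples of $p$, and the non-double-counting of the boundary trace $\epsilon 2p^k$) that the paper leaves implicit.
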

\begin{proof}
From the fact that all the roots of Hasse-Deuring polynomial are in $\fptwo$
and from Lemma~\ref{supersingular}, it follows that there is
a single isogeny class of supersingular Legendre curves in this case.
\end{proof}

\section{Isogeny classes of {\em complete} Edwards Curves}\label{complete}

Bernstein and Lange proved that the Edwards addition law is complete, \ie is well-defined
on all inputs, if and only if $\chi_2(d) = -1$~\cite{bl}. 
A natural question to consider is whether there exists a complete Edwards curve
in every isogeny class. In this section we answer this question affirmatively,
relating the number of non-square $d \in \F_q \setminus \{0,1\}$ in each
isogeny class to the total number of $d$ in each isogeny class.

\subsection{Katz's ratio results} 

While investigating the Lang-Trotter conjecture~\cite{LT}, Katz discovered some remarkable relationships between 
the number of $d \in \F_q \setminus \{0,1\}$ such that $A(d,\F_q) = q + 1 -
\#L_d = A$ for any unobstructed $A$, and the number of $d \in \F_q \setminus
\{0,1\}$ such that $A(d,\F_q) = -A$~\cite{Katz-ratio}. 

In particular, let $N(A) = \#\{ d \in \F_{q} \setminus \{0,1\} | A(d,\F_q) = A\}$. 
Katz proved that for $q \equiv -1 \pmod{4}$, one has
$N(A) = N(-A)$. For $q \equiv 1 \pmod{4}$, this is no longer the case.
Since $A \equiv 2 \pmod 4$, exactly one of $A,-A$ has $q+1 -A \equiv 0 \pmod 8$ --- call it $A$ --- with $q+1+A \equiv 4 \pmod{8}$.
Then $N(A) > N(-A)$. Furthermore, for $q \equiv 5 \pmod{8}$ the ratio
$r = N(A)/N(-A)$ is always one of the integers $2,3$, or $5$, 
depending only on the power of $2$ dividing $q+1-A$, as given in:

\begin{theorem}\cite[Theorem 2.8]{Katz-ratio}\label{katz2.8}
Suppose $q \equiv 5 \pmod{8}$. Then
\begin{eqnarray}
\nonumber ord_2(q+1-A) = 3 \Longrightarrow r = 2,\\
\nonumber ord_2(q+1-A) = 4 \Longrightarrow r = 3,\\
\nonumber ord_2(q+1-A) \ge 5 \Longrightarrow r = 5.
\end{eqnarray}
\end{theorem}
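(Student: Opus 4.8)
The plan is to reduce the ratio to a count of curves sorted by the structure of their $2$-Sylow subgroup, to realise that count through the Deuring correspondence, and to evaluate the resulting class numbers by a purely $2$-adic computation together with quadratic twisting. Throughout I work with the Legendre curves $L_d$ directly.

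First I would sort the $L_d$ by the pair $(s,t)=(\chi_2(d),\chi_2(1-d))$. A complete $2$-descent on $L_d:y^2=x(x-1)(x-d)$ shows that the rational point $(0,0)$ (resp. $(1,0)$, resp. $(d,0)$) lies in $2L_d(\F_q)$ exactly when $s=1$ (resp. $t=1$, resp. $s=t=1$), using $\chi_2(-1)=1$ for $q\equiv1\pmod4$. Hence the number of $2$-divisible nonzero $2$-torsion points, and so the type $\Z/2^a\times\Z/2^b$ of the $2$-Sylow subgroup, is governed by $(s,t)$: one has $(s,t)=(-1,-1)$ iff $\ord_2(\#L_d)=2$ (trace $-A$), exactly one of $s,t$ equal to $1$ iff $a=1$ and $\ord_2(\#L_d)\ge3$ (trace $A$), and $s=t=1$ iff $a\ge2$ (trace $A$, forcing $\ord_2\ge4$). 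Using the isomorphisms $\sigma_1,\sigma_2$ of~(\ref{s1})--(\ref{s2}), which are $\F_q$-rational precisely when the relevant element is a square, I would compute how the six values in an $H$-orbit~(\ref{isovalues}) split among $\F_q$-isomorphism classes: a class with $s=t=1$ accounts for all six of its Legendre parameters, one with exactly one of $s,t$ equal to $1$ for four of them, and one with $s=t=-1$ for two. Writing $M_{\mathrm I},M_{\mathrm{II}}$ for the numbers of trace-$A$ classes with $a\ge2$, resp. $a=1$, and $M_{\mathrm{III}}$ for the number of Legendre trace-$(-A)$ classes, this gives $N(A)=6M_{\mathrm I}+4M_{\mathrm{II}}$ and $N(-A)=2M_{\mathrm{III}}$, so $r=2+3M_{\mathrm I}/M_{\mathrm{II}}$ once $M_{\mathrm{III}}=M_{\mathrm{II}}$ is established.

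Next I would realise these counts through Deuring's theorem. For an ordinary $L_d$ of trace $A$ one has $L_d(\F_q)\cong\cO/(\pi-1)\cO$ with $\cO=\mathrm{End}(L_d)$ and $\pi$ the Frobenius, so the $2$-Sylow type is determined by $\cO\otimes\Z_2$ and $\pi-1$, and full rational $2$-torsion forces $\cO\supseteq\cO_0:=\Z[(\pi-1)/2]$. A short computation gives $\ord_2(\mathrm{disc}\,\Z[\pi])=\ord_2(A^2-4q)=4$ from $q\equiv5\pmod8$ and $A\equiv2\pmod4$, whence $\mathrm{disc}\,\cO_0$ has $2$-valuation $2$. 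Setting $v=\ord_2(q+1-A)$ I would split into two regimes according to the behaviour of $2$ in $K=\Q(\pi)$: if $v=3$ then $2$ is ramified, $\cO_0$ is already $2$-maximal, and every full-$2$-torsion curve has $a=1$, so $M_{\mathrm I}=0$; if $v\ge4$ then $2$ is unramified, $\cO_0$ has $2$-conductor $2$, type I corresponds to a $2$-maximal $\cO$ and type II to $\cO$ of $2$-conductor exactly $2$. The identity $(q+1-A)(q+1+A)=(q-1)^2-(A^2-4q)$ then pins down $v=4\Leftrightarrow 2$ inert and $v\ge5\Leftrightarrow 2$ split, via a congruence on $u:=(A^2-4q)/16$ modulo $8$.

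To finish I would prove the two remaining relations elementarily. Quadratic twisting is an involution carrying each trace-$A$ curve to the trace-$(-A)$ curve of the same $j$-invariant, and it multiplies the product $\chi_2\big((e_1-e_2)(e_1-e_3)(e_2-e_3)\big)$ of difference-characters by $\chi_2$ of a nonsquare, i.e. by $-1$; since a type-II curve has this product $-1$ and a type-I curve has it $+1$, the twist of a type-II curve is exactly a Legendre trace-$(-A)$ curve, giving $M_{\mathrm{III}}=M_{\mathrm{II}}$ (and identifying the non-Legendre trace-$(-A)$ curves with twists of type-I curves). For $M_{\mathrm I}/M_{\mathrm{II}}$ when $v\ge4$ I would invoke the conductor-change formula $h(\Z+2c\,\cO_K)=h(\Z+c\,\cO_K)\cdot 2\big(1-\tfrac12\big(\tfrac{d_K}{2}\big)\big)$ for odd $c$ (unit indices trivial as $\mathrm{disc}<-4$), which equals $h(\Z+c\,\cO_K)$ when $2$ splits and $3\,h(\Z+c\,\cO_K)$ when $2$ is inert; summing over the odd part of the conductor yields $M_{\mathrm{II}}=M_{\mathrm I}$ (split, $v\ge5$) or $M_{\mathrm{II}}=3M_{\mathrm I}$ (inert, $v=4$). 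With $M_{\mathrm I}=0$ for $v=3$, the formula $r=2+3M_{\mathrm I}/M_{\mathrm{II}}$ gives $2,3,5$ respectively. The main obstacle is making the passage from parameters $d$ to isomorphism classes \emph{exact}: the equalities $N(A)=6M_{\mathrm I}+4M_{\mathrm{II}}$ and $N(-A)=2M_{\mathrm{III}}$ assume each class contributes its full complement of distinct Legendre parameters, which fails at the curves with extra automorphisms ($j=0$ and $j=1728$, i.e. the roots of $d^2-d+1$ and $d\in\{-1,2,\tfrac12\}$). For an exact ratio one must check that these degenerate orbits fall into a single predictable type or cancel between numerator and denominator, presumably by using the properly weighted (mass) version of Deuring's theorem; the local group-structure computations for $\cO/(\pi-1)$ in the ramified, inert and split cases are routine, but the bookkeeping showing that the final ratio depends only on $v$ is where the care lies.
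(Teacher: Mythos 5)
The paper does not actually prove this statement: Theorem~\ref{katz2.8} is imported verbatim from Katz, and the surrounding text only sketches his method (the curve $y^2=(x+t)(x^2+x+t)$ with its point $(0,t)$ of order $4$ is universal over the $t$-line, $t=(1-d)/4$; a Deuring-style class number formula is derived for each trace and the ratio for $A$ versus $-A$ is then read off). Your sketch is a viable reconstruction in the same general spirit --- Deuring's correspondence plus class numbers of orders --- but organised quite differently: you stratify the Legendre parameters by $(\chi_2(d),\chi_2(1-d))$, use the $S_3$-action on $d$ to convert parameter counts into counts of $\F_q$-isomorphism classes ($N(A)=6M_{\mathrm I}+4M_{\mathrm{II}}$, $N(-A)=2M_{\mathrm{III}}$), and only then invoke class numbers. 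The key local computations you outline are correct: $\ord_2(A^2-4q)=4$ for $q\equiv5\pmod 8$; the trichotomy $v=3$, $v=4$, $v\ge5$ matching $2$ ramified, inert, split in $\QQ(\pi)$ via $(q+1-A)(q+1+A)=(q-1)^2-(A^2-4q)$; $\cO/(\pi-1)\cO$ of type $\Z/4\times\Z/2$ in the ramified case (so $M_{\mathrm I}=0$ and $r=2$); and the conductor-change factor $2\bigl(1-\tfrac12\bigl(\tfrac{d_K}{2}\bigr)\bigr)$ giving $3$ or $1$. One simplification: $M_{\mathrm{III}}=M_{\mathrm{II}}$ already falls out of your orbit decomposition, since each mixed $S_3$-orbit contains exactly one type-II trace-$A$ class (four parameters) and one trace-$(-A)$ Legendre class (two parameters); the quadratic-twist argument, though correct, is not needed for that step.

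The one genuine gap is the one you flag yourself, and your parenthetical ``unit indices trivial as $\mathrm{disc}<-4$'' sits exactly where the repair must happen. The equalities $N(A)=6M_{\mathrm I}+4M_{\mathrm{II}}$ and $N(-A)=2M_{\mathrm{II}}$ fail literally at $j=0$ and $j=1728$; for those two classes the unit index $[\cO_K^{\times}:\cO^{\times}]$ in the conductor-change formula is nontrivial ($2$ for $d_K=-4$, $3$ for $d_K=-3$) and cancels precisely against the automorphism weight $1/\#\mathrm{Aut}$ that makes the degenerate orbit contribute $3$ (resp.\ $2$) parameters instead of $6$. Concretely, for $q\equiv5\pmod 8$ the orbit $\{-1,2,1/2\}$ has $d_K=-4$, where $2$ ramifies, so it only ever occurs in the $v=3$ case and contributes $2$ parameters to $N(A)$ and $1$ to $N(-A)$, consistent with $r=2$; the orbit of $d^2-d+1=0$ has $d_K=-3$, $2$ inert, $v=4$, and contributes $2$ type-I parameters, which is exactly $6\cdot\tfrac13$. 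So the weighted (mass) version you propose does close the gap, but it must be carried out rather than presumed: the final ratio depends only on $v$ precisely because these unit-index and automorphism corrections cancel term by term.
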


For $q \equiv 1 \pmod{8}$ the situation is more complicated. If
$ord_2(q+1-A)=3$ then $r = 2$ as before. Let $\Delta = A^2 - 4q$. For the remaining cases we have:

\begin{theorem}\cite[Theorem 2.11]{Katz-ratio}\label{katz2.11}
Suppose $q \equiv 1 \pmod{8}$, and that $ord_2(q+1-A) \ge 4$. Then
$ord_2(\Delta) \ge 6$, and we have the following results.
\begin{itemize}
\item[(1)] Suppose $ord_2(\Delta) = 2k+1, k \ge 3$. Then $r = 5 - 3/2^{k-2}$.
\item[(2)] Suppose $ord_2(\Delta) = 2k, k \ge 3$. Then
\begin{itemize}
\item[(a)] if $\Delta/2^{2k} \equiv 1 \bmod{8}$, then $r = 5$,
\item[(b)] if $\Delta/2^{2k} \equiv 3 \ \text{or} \ 7 \bmod{8}$, then $r = 5 - 3/2^{k-2}$,
\item[(a)] if $\Delta/2^{2k} \equiv 5 \bmod{8}$, then $r = 5 - 1/2^{k-3}$.
\end{itemize}
\end{itemize}
\end{theorem}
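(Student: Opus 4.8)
The plan is to reduce the ratio $r = N(A)/N(-A)$ to a purely $2$-adic count of elliptic curves, carried out through Deuring's class-number machinery. The starting point is that, because $L_d : y^2 = x(x-1)(x-d)$ has its three nontrivial $2$-torsion points at the rational $x$-coordinates $0,1,d$, every Legendre curve has full rational $2$-torsion, and conversely an elliptic curve $E/\fq$ is isomorphic over $\fq$ to some $L_d$ precisely when it has full rational $2$-torsion and the relevant scaling factor is a square. Concretely, writing a full-$2$-torsion curve as $y^2 = (x-e_1)(x-e_2)(x-e_3)$, the ordering $(e_i,e_j,e_k)$ yields the parameter $\lambda = (e_k-e_i)/(e_j-e_i)$, and $L_\lambda \cong E$ over $\fq$ iff $e_j - e_i$ is a square (a non-square instead produces the quadratic twist, of trace $-A$). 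Since $q \equiv 1 \pmod 8$ forces $\chi_2(-1)=1$, the square/non-square status of a difference $e_i-e_j$ is independent of order, so each unordered pair supplies exactly two orderings of the same type. Hence, letting $P(E)$ be the number of the three pairs $\{i,j\}$ with $\chi_2(e_i-e_j)=1$, the number of $d$ with $L_d \cong E$ and trace $A$ is $2P(E)$, and
\[
N(A) = \sum_{E} 2\,P(E), \qquad N(-A) = \sum_{E} 2\,(3 - P(E)),
\]
the sums running over $\fq$-isomorphism classes of full-$2$-torsion curves of trace $A$. Corrections from curves with extra automorphisms and from the finitely many exceptional values of $d$ are $O(1)$, and I would dispose of them separately.

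The second step is to read $P(E)$ off the $2$-Sylow structure of $E(\fq)$. By the classical $2$-descent criterion, $(e_i,0)\in 2E(\fq)$ iff both differences $e_i-e_j$ are squares; since $E[2]\cap 2E(\fq)$ is a subgroup of $(\Z/2)^2$ it has order $1,2,4$, so that $P(E)\in\{2,3\}$ whenever $8\mid\#E$. Writing the $2$-Sylow subgroup as $\Z/2^a\times\Z/2^b$ with $1\le a\le b$, one checks $P(E)=3$ when $a\ge 2$ and $P(E)=2$ when $a=1$. Under the hypothesis $\ord_2(q+1-A)\ge 4$, put $M_1=\#\{E:a=1\}$ and $M_{\ge 2}=\#\{E:a\ge 2\}$. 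A short manipulation then gives
\[
r = \frac{\sum 2P(E)}{\sum 2(3-P(E))} = \frac{2M_1 + 3M_{\ge 2}}{M_1} = 2 + 3\,\frac{M_{\ge 2}}{M_1},
\]
so the whole theorem is equivalent to evaluating $M_{\ge 2}/M_1$: the value $r=2$ (the boundary case $\ord_2=3$) is $M_{\ge 2}=0$, the value $r=5$ is $M_{\ge 2}/M_1=1$, and the fractional forms $5-3/2^{k-2}$ and $5-1/2^{k-3}$ correspond to explicit rational values of this ratio.

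To compute $M_{\ge 2}/M_1$ I would invoke Deuring, Lenstra and Schoof. For ordinary trace $A$ with $\pi=(A+\sqrt{\Delta})/2$, the curves with prescribed endomorphism order $\cO$, $\Z[\pi]\subseteq\cO\subseteq\cO_K$, number $h(\cO)$ (unweighted, away from $\Delta_K\in\{-3,-4\}$), and Lenstra's theorem $E(\fq)\cong\cO/(\pi-1)\cO$ shows that $a$ is constant on each such class and is determined by $\cO$ and the behaviour of $2$ in it. Writing $\Delta=f^2\Delta_K$ and using the order class-number formula
\[
h(\cO_c) = \frac{h(\cO_K)\,c}{[\cO_K^\times:\cO_c^\times]}\prod_{\ell\mid c}\Big(1 - \big(\tfrac{\Delta_K}{\ell}\big)\tfrac1\ell\Big),
\]
the factor $h(\cO_K)$, the odd part of $c$, and all odd primes cancel in $M_{\ge 2}/M_1$, since the admissible odd-conductor orders are common to numerator and denominator, and the unit index is trivial except in the finitely many excluded cases. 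What survives is a sum over the $2$-power conductors $2^j$ of the local factors $2^j\big(1-(\tfrac{\Delta_K}{2})\tfrac12\big)$, split at the threshold $j_0$ where $a$ drops from $\ge 2$ to $1$. The Kronecker symbol $(\tfrac{\Delta_K}{2})$ is exactly what the residue of $\Delta/2^{\ord_2\Delta}$ modulo $8$ records ($\equiv 1$: split; $\equiv 5$: inert; $\equiv 3,7$ or odd $\ord_2\Delta$: ramified), so summing the resulting geometric series in $2^j$ should produce the three closed forms, with $k$ entering through the relation between $\ord_2\Delta$ and $j_0$.

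I expect the genuine difficulty to lie entirely in this last step: pinning down, for each splitting type of $2$ and each value of $\ord_2\Delta$, the precise threshold conductor $j_0$ separating the $a\ge 2$ orders from the $a=1$ orders. This requires a careful local analysis at $2$ of the quotient $\cO_c/(\pi-1)$ — equivalently, of when $\pi-1$ is divisible by $2,4,\dots$ in $\cO_c$ — and it is here that the divisibility hypotheses $\ord_2(q+1-A)\ge 4$ and $\ord_2(\Delta)\ge 6$ (the latter itself a short $2$-adic check, since $q\equiv 1\pmod 8$ makes $(q-1)^2\equiv 0\pmod{64}$ dominate in $\Delta=(q-1)^2-2(q+1)N+N^2$) must be fed in. The reductions of the first two paragraphs are clean and combinatorial, and the class-number formula kills everything away from $2$; it is the $2$-local bookkeeping, together with the case distinction on $\Delta/2^{2k}\bmod 8$, that carries the real content and must reproduce the numbers $2,3,5$ and the corrections $3/2^{k-2}$ and $1/2^{k-3}$.
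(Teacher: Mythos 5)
First, a point of reference: the paper does not prove this statement at all --- Theorem~\ref{katz2.11} is imported verbatim from Katz, and the surrounding text only sketches his method, namely a Deuring-style class number formula for the universal curve $y^2=(x+t)(x^2+x+t)$ with its marked point of order $4$, reached from $L_d$ by a $2$-isogeny with $t=(1-d)/4$. Your opening reduction is therefore genuinely different from the cited one: you count Legendre parameters per $\F_q$-isomorphism class of full-$2$-torsion curves via the quadratic character of the differences $e_i-e_j$, rather than via marked order-$4$ points. That reduction is sound as far as it goes: with $q\equiv 1\pmod 8$ the two orderings of each unordered pair give the same character, the $2$-descent criterion (Lemma~\ref{2descent}) shows $P(E)\in\{2,3\}$ once $8\mid\#E(\F_q)$, with $P(E)=3$ exactly when the $2$-Sylow subgroup is $\Z/2^a\times\Z/2^b$ with $a\ge 2$, and the twist relation $P(E^{\mathrm{tw}})=3-P(E)$ gives $r=2+3M_{\ge 2}/M_1$. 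This is consistent with all the asserted values (for instance $r=5$ iff $M_{\ge2}=M_1$, and the $3$ in $5-1/2^{k-3}$ is visibly the inert local factor $1-\big(\tfrac{\Delta_K}{2}\big)\tfrac12=\tfrac32$).

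The genuine gap is that the step carrying essentially all of the content of the theorem is announced but not performed. Every number in the statement --- the limiting value $5$, the corrections $3/2^{k-2}$ and $1/2^{k-3}$, and the trichotomy on $\Delta/2^{2k}\bmod 8$ --- comes out of the $2$-local computation you defer to your final paragraph: for each order $\Z[\pi]\subseteq\cO\subseteq\cO_K$ one must determine $\ord_2$ of the largest integer $m$ with $(\pi-1)/m\in\cO$ (the first invariant factor of $\cO/(\pi-1)\cO$, via Lenstra) as a function of the $2$-part of the conductor, the splitting of $2$ in $K$, and $\ord_2(\Delta)$; one must then verify that $a\ge2$ holds for all conductor levels below a precise threshold, that $a=1$ occurs at exactly one level, and that $a=0$ (no full $2$-torsion, hence no contribution) occurs above it, before summing the resulting geometric series in each of the three splitting types. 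Without this, the argument establishes only $r=2+3M_{\ge2}/M_1$ and none of the stated formulae. A secondary but real issue: the theorem asserts exact equalities, so the exceptional classes with $j_L(d)\in\{0,1728\}$ (orbits of size less than $6$, extra automorphisms, and the automorphism weighting in the Deuring count) cannot be dismissed as $O(1)$ corrections; they must be shown either not to arise for the traces in question or to preserve the ratio.
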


To explain these phenomena, Katz uses the fact that $L_d$ is $2$-isogenous to
the elliptic curve $y^2 = (x+t)(x^2 + x + t)$, $t \ne 0,1/4$, having a point $(0,t)$ of order
$4$ and where $t=(1-d)/4$. Over the $t$-line, this family of
curves with its point $(0,t)$ is the universal curve given with a point of order
$4$. Using this property Katz derives a Deuring-style class number formula 
to express the number of $t \in \F_q$ such that
$A(t,\F_q) = A$. Expressing the same for $-A$ and then computing the
ratio $N(A)/N(-A)$ happens to be far simpler than computing the exact numbers themselves,
as it obviates the need to perform any class group order computations. 
However, in the proof no consideration was given (nor was it needed) of the quadratic
character of elements $t$ in a given $N(A)$. Furthermore, since under this
$2$-isogeny we have $t = (1-d)/4$, determining how the corresponding square
and non-square $d$ are distributed between the numerator and denominator of 
$N(A)/N(-A)$ is certainly not immediate.

However, we observed (empirically - and then proved) that the following holds. Let $N_{2}(A)$
and $N_{n2}(A)$ be the partition of $N(A)$ into square and non-square $d$
respectively, and similarly for $-A$. For $q \equiv 1 \pmod{4}$, we have
$N_{n2}(A) = N_{n2}(-A) = N(-A)$, \ie the smallest of the two values
$N(A),N(-A)$. Hence the excess of $N(A)$
over $N(-A)$ consists entirely of square $d$. 
For $q \equiv -1 \pmod{4}$ we have
\[
N_{n2}(A) = \begin{cases} N(A) \hspace{7mm} \text{if} \ q+1-A \equiv 4 \pmod{8} \\ 
N(A)/3 \hspace{3mm} \text{if} \ q+1-A \equiv 0 \pmod{8}. \end{cases}
\]
Since $q \equiv -1 \pmod 4$ we have $N_{n2}(A) = N_{n2}(-A)$ in this case
also. Our proof of these facts is elementary. 

\subsection{Proof of claims}

We use the following three lemmata, the 
first of which can be found in~\cite[Theorem 8.14]{washington} (see also~\cite[X, Sect. 1]{silverman}):

\begin{lemma}[2-descent]\label{2descent}
Assume $\text{char}(\F_q) > 2$, and let $E(\F_q)$ be given by $y^2 = (x -
\alpha)(x-\beta)(x-\gamma)$ with $\alpha,\beta,\gamma \in \F_q$, $\alpha \ne
\beta \ne \gamma \ne \alpha$. The map
\[
\phi: E(\F_q) \longrightarrow \F_{q}^{\times}/(\F_{q}^{\times})^2 \times \F_{q}^{\times}/(\F_{q}^{\times})^2 \times \F_{q}^{\times}/(\F_{q}^{\times})^2 
\]
defined by
\begin{eqnarray}
\nonumber (x,y) &\mapsto& (x-\alpha, x-\beta, x- \gamma) \ \text{when} \ y\neq 0\\ 
\nonumber \mathcal{O} &\mapsto& (1,1,1)\\
\nonumber (e_1,0) &\mapsto& ((e_1 - e_2)(e_1 - e_3), e_1 - e_2, e_1 - e_3)\\
\nonumber (e_2,0) &\mapsto& (e_2 - e_1, (e_2 - e_1)(e_2 - e_3), e_2 - e_3)\\
\nonumber (e_3,0) &\mapsto& (e_3 - e_1, e_3 - e_2, (e_3 - e_1)(e_3 - e_2))
\end{eqnarray}
is a homomorphism, with kernel $2E(\F_q)$.
\end{lemma}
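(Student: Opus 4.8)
The plan is to reduce everything to the three coordinate maps. For each root, let $\phi_1,\phi_2,\phi_3 : E(\F_q) \to \F_q^\times/(\F_q^\times)^2$ be the maps recording the square classes of $x-\alpha$, $x-\beta$, $x-\gamma$ on affine points with $y\neq 0$, together with the prescribed values at the $2$-torsion points, so that $\phi=(\phi_1,\phi_2,\phi_3)$. First I would check $\phi$ is well defined: if $y\neq 0$ then $x$ is not a root of the cubic, so each $x-e_i\in\F_q^\times$; at a $2$-torsion point the prescribed entries are products of differences of the \emph{distinct} roots and hence again lie in $\F_q^\times$. (The $2$-torsion values are the natural ``regularised'' square classes: near $(\alpha,0)$ one has $x-\alpha \sim y^2/((\alpha-\beta)(\alpha-\gamma))$, whose square class is that of $(\alpha-\beta)(\alpha-\gamma)$, exactly the stated first coordinate.)

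For the homomorphism property I would use the collinearity description of the group law. Writing $g(X)=(X-\alpha)(X-\beta)(X-\gamma)$ so that $y^2=g(X)$, if $P,Q,R$ are affine with $P+Q+R=\mathcal{O}$ they lie on a line $Y=mX+n$, and their $x$-coordinates are precisely the roots of the monic cubic $g(X)-(mX+n)^2$, whence
\[
g(X)-(mX+n)^2=(X-x_P)(X-x_Q)(X-x_R).
\]
Evaluating at $X=\alpha$ and using $g(\alpha)=0$ yields $(x_P-\alpha)(x_Q-\alpha)(x_R-\alpha)=(m\alpha+n)^2$, a square, so $\phi_1(P)\phi_1(Q)\phi_1(R)=1$ in $\F_q^\times/(\F_q^\times)^2$, and likewise for $\phi_2,\phi_3$. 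Since negation fixes the $x$-coordinate we have $\phi_i(-S)=\phi_i(S)$, and $R=-(P+Q)$ then gives $\phi_i(P+Q)=\phi_i(P)\phi_i(Q)$. The remaining work is bookkeeping for the degenerate configurations: when a point is $\mathcal{O}$, when the line passes through some $(e_j,0)$ (so $m e_j+n=0$ and the naive value must be replaced by the $2$-torsion value $(e_j-e_k)(e_j-e_l)$), and the doubling case $P=Q$; in each one checks that the special definitions at the $2$-torsion points are exactly what preserves $\phi_i(P+Q)=\phi_i(P)\phi_i(Q)$. This case analysis is the more tedious part of the first half.

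The inclusion $2E(\F_q)\subseteq\ker\phi$ is immediate, as the target has exponent $2$ and so $\phi(2Q)=\phi(Q)^2=(1,1,1)$. The substantive direction $\ker\phi\subseteq 2E(\F_q)$ is where I expect the main obstacle, and I would handle it by an explicit halving. Given $P=(x,y)$ with $x-\alpha=a^2$, $x-\beta=b^2$, $x-\gamma=c^2$ for $a,b,c\in\F_q$ (chosen so that $abc=y$), seek $Q=(u,v)$ with $2Q=P$. Requiring the tangent line $Y=mX+n$ at $Q$ to meet the curve again above $x$ forces the polynomial identity
\[
g(X)-(mX+n)^2=(X-u)^2(X-x),
\]
and evaluating both sides at $X=e_i$ gives the conditions $m e_i+n=\pm a_i(u-e_i)$ with $(a_1,a_2,a_3)=(a,b,c)$. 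For a fixed sign pattern these are three \emph{linear} equations in the unknowns $(m,n,u)$ over $\F_q$, with coefficient matrix having rows $(e_i,1,-a_i)$; when this matrix is nonsingular the system has a solution in $\F_q$, and agreement of the two monic cubics at the three distinct points $e_1,e_2,e_3$ then forces the full identity above, so $Q=(u,mu+n)\in E(\F_q)$ satisfies $2Q=\pm P$ and hence $P\in 2E(\F_q)$. The delicate points are verifying that some admissible sign choice always makes the matrix nonsingular (the singular cases corresponding to a vertical tangent, i.e.\ $P=\mathcal{O}$), and checking the $2$-torsion points separately, where $\phi(e_i,0)=(1,1,1)$ forces the relevant root differences to be squares and again produces a rational halving; securing a genuine rather than spurious solution here is the crux of the lemma.
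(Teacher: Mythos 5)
The paper does not prove this lemma at all: it is quoted as Theorem 8.14 of Washington (with Silverman X~\S1 as an alternative reference), so there is no in-paper argument to compare against. Your plan is, in substance, exactly the standard proof that Washington gives: the homomorphism property via the collinearity relation $g(X)-(mX+n)^2=(X-x_P)(X-x_Q)(X-x_R)$ evaluated at the roots of $g$, the trivial inclusion $2E(\F_q)\subseteq\ker\phi$, and the reverse inclusion by writing the tangent-line condition $g(X)-(mX+n)^2=(X-u)^2(X-x)$ as a linear system in $(m,n,u)$ after extracting square roots $a_i$ of the $x-e_i$. The pieces you flag as delicate do close. For the degenerate case where the chord passes through $(e_j,0)$ with $e_j=\alpha$, dividing the collinearity identity by $X-\alpha$ and setting $X=\alpha$ gives $(x_P-\alpha)(x_Q-\alpha)=(\alpha-\beta)(\alpha-\gamma)-m^2\cdot 0$, which is precisely why the regularised $2$-torsion values make the product relation survive. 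For the sign choice: the determinant of the matrix with rows $(e_i,1,-\epsilon_i a_i)$ is $-\epsilon_1 a_1(e_2-e_3)+\epsilon_2 a_2(e_1-e_3)-\epsilon_3 a_3(e_1-e_2)$, linear in each $\epsilon_i$, so if it vanished for all eight sign patterns each coefficient $a_i(e_j-e_k)$ would vanish, forcing every $a_i=0$ and hence $x=e_1=e_2=e_3$, a contradiction (when $y\neq 0$ all $a_i$ are nonzero; at a $2$-torsion point one $a_i$ is zero and the same argument on the remaining two signs applies). Your parenthetical identifying the singular sign choices with ``$P=\mathcal{O}$'' is not accurate, but it is harmless since the existence of a nonsingular choice is what the argument needs and is guaranteed as above. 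The only other caveat is that the two monic cubics must be compared after verifying they agree at the three \emph{distinct} points $e_1,e_2,e_3$, which your setup does. So the proposal is a correct (if partly deferred) rendering of the cited textbook proof rather than a new route.
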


Applying Lemma~\ref{2descent} to the $2$-torsion points
$(0,0),(1,0)$ and $(d,0)$ of $L_d(\F_q)$, one can compute the possible
$4$-torsion groups $L_d(\F_q)[4]$, which depend only on $\chi_2(-1),\chi_2(d)$ and
$\chi_2(1-d)$, giving the following result.

\begin{lemma}\label{4torsion}
For $q \equiv \pm 1 \bmod{4}$, the possible $4$-torsion
groups $L_d(\F_q)[4]$, are those detailed in Tables 1 and 2 respectively.
\begin{table}[here]
\caption{$q \equiv 1 \bmod{4}$}
\begin{center}
\begin{tabular}{c|c|c|c}
\hline
$\chi_2(d)$     & $\chi_2(1-d)$ & $(L_d(\F_q)[2] \cap 2L_d(\F_q))\setminus\{\mathcal{O}\}$  & $L_d(\F_q)[4]$\\
\hline
$1$     &  $1$    &  $(0,0),(1,0),(d,0)$  & $\Z/4\Z \times \Z/4\Z$ \\
$-1$    &  $1$    &  $(1,0)$              & $\Z/4\Z \times \Z/2\Z$ \\
$1$     & $-1$    &  $(0,0)$              & $\Z/4\Z \times \Z/2\Z$ \\
$-1$    &  $-1$   &                       & $\Z/2\Z \times \Z/2\Z$ \\
\hline
\end{tabular}
\end{center}
\end{table}

\begin{table}[here]
\caption{$q \equiv -1 \bmod{4}$}
\begin{center}
\begin{tabular}{c|c|c|c}
\hline
$\chi_2(d)$     & $\chi_2(1-d)$ & $(L_d(\F_q)[2] \cap 2L_d(\F_q))\setminus\{\mathcal{O}\}$  & $L_d(\F_q)[4]$\\
\hline
$1$     &  $1$    &  $(1,0)$              & $\Z/4\Z \times \Z/2\Z$ \\
$-1$    &  $1$    &  $(1,0)$              & $\Z/4\Z \times \Z/2\Z$ \\
$1$     & $-1$    &  $(d,0)$              & $\Z/4\Z \times \Z/2\Z$ \\
$-1$    &  $-1$   &                       & $\Z/2\Z \times \Z/2\Z$ \\
\hline
\end{tabular}
\end{center}
\end{table}
\end{lemma}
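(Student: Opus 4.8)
The plan is to reduce the determination of $L_d(\F_q)[4]$ to the single question of which of the three nontrivial $2$-torsion points lie in $2L_d(\F_q)$, and then to answer that question with the $2$-descent map of Lemma~\ref{2descent}. First I would record that all three roots $0,1,d$ of the Legendre cubic lie in $\F_q$, so that $L_d(\F_q)[2] = \{\mathcal{O},(0,0),(1,0),(d,0)\} \cong \Z/2\Z \times \Z/2\Z$ is fully rational; hence $L_d(\F_q)[4]$ is a subgroup of $(\Z/4\Z)^2$ containing $(\Z/2\Z)^2$.

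Next I would observe that the multiplication-by-$2$ map on $L_d(\F_q)[4]$ has kernel $L_d(\F_q)[2]$, of order $4$, and image exactly $L_d(\F_q)[2] \cap 2L_d(\F_q)$: indeed, if a $2$-torsion point $T$ equals $2P$ for some $P \in L_d(\F_q)$, then $4P = \mathcal{O}$, so $P \in L_d(\F_q)[4]$. Consequently $|L_d(\F_q)[4]| = 4\cdot|L_d(\F_q)[2] \cap 2L_d(\F_q)|$. Since $L_d(\F_q)[2] \cap 2L_d(\F_q)$ is a subgroup of $(\Z/2\Z)^2$, its order is $1$, $2$, or $4$ — so the number of divisible nontrivial $2$-torsion points is $0$, $1$, or $3$, never exactly two — and among subgroups of $(\Z/4\Z)^2$ containing $(\Z/2\Z)^2$ the cardinalities $4,8,16$ force the structures $\Z/2\Z\times\Z/2\Z$, $\Z/4\Z\times\Z/2\Z$, and $\Z/4\Z\times\Z/4\Z$ respectively. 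Thus it suffices to decide, for each nontrivial $T \in L_d(\F_q)[2]$, whether $T \in 2L_d(\F_q)$, after which both columns of the tables are determined.

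To do this I would apply Lemma~\ref{2descent} with $(\alpha,\beta,\gamma)=(0,1,d)$, noting that a $2$-torsion point lies in $2L_d(\F_q) = \ker\phi$ precisely when all three coordinates of its image are squares. Evaluating $\phi$ gives, as classes in $\F_q^{\times}/(\F_q^{\times})^2$, the images $\phi((0,0)) = (d,-1,-d)$, $\phi((1,0)) = (1,1-d,1-d)$, and $\phi((d,0)) = (d,d-1,d(d-1))$. Reading off the square conditions yields: $(0,0) \in 2L_d(\F_q)$ if and only if $\chi_2(-1)=\chi_2(d)=1$; $(1,0) \in 2L_d(\F_q)$ if and only if $\chi_2(1-d)=1$; and, using $\chi_2(d-1)=\chi_2(-1)\chi_2(1-d)$, $(d,0) \in 2L_d(\F_q)$ if and only if $\chi_2(d)=1$ and $\chi_2(-1)\chi_2(1-d)=1$.

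Finally I would split into the cases $\chi_2(-1)=1$ (that is, $q \equiv 1 \bmod 4$) and $\chi_2(-1)=-1$ (that is, $q \equiv -1 \bmod 4$) and tabulate the three criteria against the four sign patterns of $(\chi_2(d),\chi_2(1-d))$; combined with the cardinality count of the second paragraph, this reproduces Tables 1 and 2 exactly. The crux — really the only thing that needs care — is the bookkeeping of $\chi_2(-1)$ in the criterion for $(d,0)$, since this is what makes the two tables differ: when $\chi_2(-1)=1$ the points $(0,0)$ and $(d,0)$ may be divisible, whereas when $\chi_2(-1)=-1$ the point $(0,0)$ is never divisible and the divisibility of $(d,0)$ flips to requiring $\chi_2(1-d)=-1$. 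No individual computation is hard; one need only keep the character relations $\chi_2(-d)=\chi_2(-1)\chi_2(d)$ and $\chi_2(d-1)=\chi_2(-1)\chi_2(1-d)$ consistently in view.
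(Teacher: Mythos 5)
Your proposal is correct and follows exactly the route the paper intends: apply the $2$-descent map of Lemma~\ref{2descent} to the three rational $2$-torsion points, read off divisibility from the quadratic characters of $-1$, $d$ and $1-d$, and convert the count of divisible points into the group structure via $|L_d(\F_q)[4]| = 4\,|L_d(\F_q)[2]\cap 2L_d(\F_q)|$. The paper leaves these details implicit, and your computations of $\phi((0,0))=(d,-1,-d)$, $\phi((1,0))=(1,1-d,1-d)$, $\phi((d,0))=(d,d-1,d(d-1))$ and the resulting case analysis reproduce Tables 1 and 2 exactly.
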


We also use the following easy result, the first part of which was also used by
Katz~\cite[Lemma 2.3]{Katz-ratio}.

\begin{lemma}\label{easyisos}
For $d \in \F_q \setminus \{0,1\}$ we have:
\begin{itemize}
\item[(i)] $A(d,\F_q) = \chi_2(-1) \cdot A(1-d,\F_q)$,
\item[(ii)] $A(d,\F_q) = \chi_2(d) \cdot A(1/d,\F_q)$.
\end{itemize}
\end{lemma}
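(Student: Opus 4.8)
The plan is to prove both identities directly from the definition $A(d,\fq) = -\sum_{x \in \fq} \chi_2(f_d(x))$, where I write $f_d(x) = x(x-1)(x-d)$ and $\chi_2 = \eta$ denotes the quadratic character of $\fq$. Each part follows from an invertible linear change of variable in the defining sum, together with the multiplicativity $\chi_2(ab) = \chi_2(a)\chi_2(b)$. The two substitutions are precisely the ones underlying the isomorphisms $\sigma_1$ and $\sigma_2$ of \S3, and the character factors $\chi_2(-1)$ and $\chi_2(d)$ will arise exactly as the twists induced by the $y$-coordinate scalings $\sqrt{-1}$ and $d^{3/2}$ appearing in (\ref{s1}) and (\ref{s2}).

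For part (i), I would substitute $x \mapsto 1-x$, which is a bijection of $\fq$, and check the polynomial identity $f_d(1-x) = -f_{1-d}(x)$: writing the three transformed factors as $1-x = -(x-1)$, $(1-x)-1 = -x$, and $(1-x)-d = -(x-(1-d))$, the three sign changes combine to a single overall $-1$, leaving $x(x-1)(x-(1-d)) = f_{1-d}(x)$. Pulling out $\chi_2(-1)$ then gives $\sum_x \chi_2(f_d(x)) = \chi_2(-1)\sum_x \chi_2(f_{1-d}(x))$, and negating both sides yields (i).

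For part (ii), since $d \ne 0$ the map $x \mapsto dx$ is a bijection of $\fq$, and I would compute $f_d(dx) = d^3 f_{1/d}(x)$ by extracting one factor of $d$ from each of $dx$, $dx-1 = d(x-1/d)$, and $dx-d = d(x-1)$. Multiplicativity then gives $\chi_2(f_d(dx)) = \chi_2(d^3)\,\chi_2(f_{1/d}(x)) = \chi_2(d)\,\chi_2(f_{1/d}(x))$, using $\chi_2(d)^3 = \chi_2(d)$ for the $\pm 1$-valued character; summing over $x$ and negating yields (ii).

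The only thing requiring any care --- and the nearest thing to an obstacle --- is tracking the sign in (i) and the power of $d$ in (ii) through the two factorisations, both of which are elementary once the correct substitution is fixed; no class-number or point-counting input is needed. Conceptually, the identities simply record that $L_{1-d}$ and $L_{1/d}$ are the quadratic twists of $L_d$ by $-1$ and by $d$ respectively, so that their Frobenius traces differ from $A(d,\fq)$ by exactly the corresponding quadratic character.
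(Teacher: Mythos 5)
Your proof is correct, and it takes a slightly different (more computational) route than the paper. The paper disposes of this lemma in one line, declaring both identities ``immediate consequences'' of the isomorphisms $\sigma_1:(x,y)\mapsto(1-x,\sqrt{-1}\,y)$ and $\sigma_2:(x,y)\mapsto(x/d,y/d^{3/2})$ of \S3: when the relevant scalar ($\sqrt{-1}$ or $\sqrt{d}$) lies in $\F_q$ the isomorphism is defined over $\F_q$ and the traces agree, and otherwise $L_{1-d}$ (resp.\ $L_{1/d}$) is the quadratic twist of $L_d$, which negates the trace. You instead work directly with the defining character sum, verifying the polynomial identities $f_d(1-x)=-f_{1-d}(x)$ and $f_d(dx)=d^3f_{1/d}(x)$ and extracting $\chi_2(-1)$ and $\chi_2(d^3)=\chi_2(d)$ by multiplicativity; your sign and exponent bookkeeping is accurate. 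The two arguments encode the same phenomenon --- as you note yourself, the substitutions $x\mapsto 1-x$ and $x\mapsto dx$ are exactly the $x$-coordinate parts of $\sigma_1$ and $\sigma_2$, and the character factors are the twists induced by the $y$-scalings --- but yours is self-contained and does not presuppose the standard fact that a nontrivial quadratic twist negates the Frobenius trace, whereas the paper's one-liner leans on that fact together with the explicit isomorphisms already displayed. Either is acceptable; yours trades brevity for being verifiable from first principles.
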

\begin{proof}
These are immediate consequences of isomorphisms~(\ref{s1}) and~(\ref{s2}).
\end{proof}

We are now ready to prove our observations.

\begin{theorem}\label{1mod4nonsquare}
For $q \equiv 1 \pmod{4}$, let $A$ be such that $q+1 - A \equiv 0
\pmod{8}$ (and so $q+1 + A \equiv 4 \pmod{8}$). Then $N_{n2}(A) = N_{n2}(-A) =
N(-A)$.
\end{theorem}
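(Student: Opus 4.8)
The plan is to establish the two asserted equalities separately. For $N_{n2}(-A) = N(-A)$ the strategy is to show that \emph{every} $d$ with $A(d,\F_q) = -A$ is a non-square, so that the square part $N_2(-A)$ vanishes; this is where the arithmetic hypothesis does its work. For $N_{n2}(A) = N_{n2}(-A)$ the strategy is to exhibit an explicit involution of $\F_q \setminus \{0,1\}$ that restricts to a bijection between the non-square $d$ of trace $A$ and those of trace $-A$.

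First I would analyse the trace-$(-A)$ curves. By hypothesis $q+1+A \equiv 4 \pmod 8$, so any $d$ with $A(d,\F_q) = -A$ has $\#L_d(\F_q) = q+1+A \equiv 4 \pmod 8$. Inspecting Table 1 of Lemma~\ref{4torsion} (the case $q \equiv 1 \pmod 4$), the three configurations $(\chi_2(d),\chi_2(1-d)) \in \{(1,1),(-1,1),(1,-1)\}$ yield $L_d(\F_q)[4] \in \{\Z/4\Z \times \Z/4\Z,\ \Z/4\Z \times \Z/2\Z\}$, which forces $8 \mid \#L_d(\F_q)$ and is therefore incompatible with $\#L_d \equiv 4 \pmod 8$. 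The only surviving possibility is $(\chi_2(d),\chi_2(1-d)) = (-1,-1)$, for which $L_d(\F_q)[4] = \Z/2\Z \times \Z/2\Z$ and indeed $\#L_d \equiv 4 \pmod 8$. In particular $\chi_2(d) = -1$ for every such $d$, so $N_2(-A) = 0$ and hence $N_{n2}(-A) = N(-A)$.

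Next I would produce the bijection via the map $\iota : d \mapsto 1/d$, an involution of $\F_q \setminus \{0,1\}$ that preserves the quadratic character since $\chi_2(1/d) = \chi_2(d)$. For a non-square $d$ we have $\chi_2(d) = -1$, so Lemma~\ref{easyisos}(ii) gives $A(1/d,\F_q) = \chi_2(d)\,A(d,\F_q) = -A(d,\F_q)$. Thus $\iota$ carries the set of non-square $d$ with $A(d,\F_q) = A$ bijectively onto the set of non-square $d$ with $A(d,\F_q) = -A$ (its inverse being $\iota$ itself, applied on the target), whence $N_{n2}(A) = N_{n2}(-A)$. Combining with the previous paragraph gives $N_{n2}(A) = N_{n2}(-A) = N(-A)$, as required.

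The crux of the argument is the first step: recognising that the congruence $q+1+A \equiv 4 \pmod 8$ pins down $\chi_2(d)$ and $\chi_2(1-d)$ through the $4$-torsion classification of Lemma~\ref{4torsion}, collapsing the trace-$(-A)$ curves onto the single non-square configuration $(-1,-1)$. Once this is in hand, the involution $d \mapsto 1/d$ together with Lemma~\ref{easyisos}(ii) makes the second equality essentially immediate; the only routine verifications are that $\iota$ is well-defined on $\F_q \setminus \{0,1\}$ and preserves non-squareness, both of which are straightforward.
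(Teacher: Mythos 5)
Your proposal is correct and follows essentially the same route as the paper: Table~1 of Lemma~\ref{4torsion} forces $8 \mid \#L_d(\F_q)$ for every square $d$, so $N_2(-A)=0$ since $q+1+A \equiv 4 \pmod 8$, and the involution $d \mapsto 1/d$ combined with Lemma~\ref{easyisos}(ii) gives the bijection $N_{n2}(A) = N_{n2}(-A)$. Your extra observation that the trace-$(-A)$ curves are pinned to the single configuration $(\chi_2(d),\chi_2(1-d))=(-1,-1)$ is slightly stronger than what the paper uses but changes nothing essential.
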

\begin{proof}
From Table 1 we see that for any square $d$, $L_d(\F_q)$ contains a subgroup
of order either $8$ or $16$. As $q + 1 + A \equiv 4 \pmod{8}$, by
Lagrange's theorem we must have $N_{2}(-A) =0$ . Hence all $d$ counted
by $N(-A)$ are necessarily non-square, and since by Lemma~\ref{ordinary} every unobstructed $A$
occurs, we have $N_{n2}(-A) = N(-A)$. 
Since $\F_{q} \setminus \{0,1,-1\}$ partitions
into a disjoint union of pairs $\{d,1/d\}$, 
by Lemma~\ref{easyisos}(ii) for non-square $d$ we have 
a bijection between the elements counted by $N_{n2}(-A)$ and those counted by $N_{n2}(A)$, and
hence these numbers are equal.
\end{proof}

\begin{theorem}\label{notsquare2}
For $q \equiv -1 \pmod{4}$, we have
\[
N_{n2}(A) = \begin{cases} N(A) \hspace{7mm} \text{if} \ q+1-A \equiv 4 \pmod{8} \\ 
N(A)/3 \hspace{3mm} \text{if} \ q+1-A \equiv 0 \pmod{8}. \end{cases}
\]
\end{theorem}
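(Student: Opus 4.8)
The plan is to translate the arithmetic condition on $A$ modulo $8$ into the quadratic characters of $d$ and $1-d$, and then prove the two displayed identities by exhibiting explicit value-preserving involutions on the relevant sets of parameters. Throughout I would write $N^{s,t}(A)$ for the number of $d \in \F_q \setminus \{0,1\}$ with $A(d,\F_q) = A$, $\chi_2(d) = s$ and $\chi_2(1-d) = t$, so that $N(A) = \sum_{s,t} N^{s,t}(A)$, $N_2(A) = N^{1,1}(A) + N^{1,-1}(A)$ and $N_{n2}(A) = N^{-1,1}(A) + N^{-1,-1}(A)$.

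First I would pin down the $2$-adic valuation of $\#L_d(\F_q) = q+1-A$ in terms of the type $(\chi_2(d),\chi_2(1-d))$. Writing $L_d(\F_q) \cong \Z/n_1\Z \times \Z/n_2\Z$ with $n_1 \mid n_2$, the group $L_d(\F_q)[4]$ determines $\gcd(n_i,4)$, so reading off the table for $q \equiv -1 \pmod 4$ in Lemma~\ref{4torsion} gives: if the type is $(-1,-1)$ then $L_d(\F_q)[4] = \Z/2\Z \times \Z/2\Z$, forcing $v_2(n_1) = v_2(n_2) = 1$ and hence $q+1-A \equiv 4 \pmod 8$; while for each of the other three types $L_d(\F_q)[4] = \Z/4\Z \times \Z/2\Z$, forcing $v_2(n_1) = 1$, $v_2(n_2) \ge 2$ and hence $8 \mid q+1-A$. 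Thus the parameter $d$ has type $(-1,-1)$ if and only if $q+1-A \equiv 4 \pmod 8$, and has one of the types $(1,1),(-1,1),(1,-1)$ if and only if $8 \mid q+1-A$.

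The first case is then immediate: if $q+1-A \equiv 4 \pmod 8$, every $d$ contributing to $N(A)$ has type $(-1,-1)$, so in particular $\chi_2(d)=-1$; hence $N_2(A)=0$ and $N_{n2}(A)=N(A)$. For the second case, $8 \mid q+1-A$, every contributing $d$ has type in $\{(1,1),(-1,1),(1,-1)\}$, and the only non-square type here is $(-1,1)$, so $N_{n2}(A) = N^{-1,1}(A)$ and $N(A) = N^{1,1}(A) + N^{1,-1}(A) + N^{-1,1}(A)$. It therefore suffices to show these three counts are equal. I would do this with two involutions drawn from the $H$-action. By Lemma~\ref{easyisos}(ii), restricted to the squares the map $d \mapsto 1/d$ preserves $A(d,\F_q)$ and sends type $(1,1) \leftrightarrow (1,-1)$, giving $N^{1,1}(A) = N^{1,-1}(A)$. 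Next, a short computation chaining Lemma~\ref{easyisos}(i) and~(ii) shows that the involution $d \mapsto d/(d-1)$ multiplies $A(d,\F_q)$ by $\chi_2(1-d)$ and sends type $(s,t)$ to $(-st,t)$; restricted to the types $(1,1)$ and $(-1,1)$ (both having $\chi_2(1-d)=1$) it preserves the value $A$ and swaps these two types, giving $N^{1,1}(A) = N^{-1,1}(A)$. Hence all three counts equal $N(A)/3$, and $N_{n2}(A) = N^{-1,1}(A) = N(A)/3$.

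The main point to handle carefully is the passage from the congruence on $q+1-A$ to the character type of $d$, \ie the group-structure argument of the second paragraph; once this dictionary is in place the counting is entirely elementary. The one place a naive orbit count could go wrong is at the degenerate orbits (the points with $d^2-d+1=0$ or $d \in \{-1,2,1/2\}$), but the two involutions above each swap a pair of \emph{distinct} types, so they can have no fixed point on the sets being matched and the bijections are automatic, sidestepping any case analysis of small orbits.
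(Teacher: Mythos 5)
Your proof is correct, and it takes a genuinely different route from the paper's. Both arguments rest on the same two ingredients --- Table 2 of Lemma~\ref{4torsion} to convert the congruence on $q+1-A$ into the character type $(\chi_2(d),\chi_2(1-d))$, and the relations of Lemma~\ref{easyisos} --- but the paper organizes the count by $H$-orbits: it fixes an isomorphism class, distributes its six parameters between $N(A)$ and $N(-A)$, verifies the $2{:}1$ square/non-square ratio in each half, and then must separately treat the degenerate orbits $j_L(d)=1728$ (i.e.\ $d\in\{2,1/2,-1\}$) and $j_L(d)=0$ (i.e.\ $d^2-d+1=0$), where the orbit has fewer than six elements. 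You instead refine the count by character type $N^{s,t}(A)$ and exhibit two $A$-preserving involutions, $d\mapsto 1/d$ swapping types $(1,1)\leftrightarrow(1,-1)$ and $d\mapsto d/(d-1)$ swapping $(1,1)\leftrightarrow(-1,1)$ (both computations check out: for $q\equiv-1\pmod 4$ one has $A(1/d)=\chi_2(d)A(d)$ and $A(d/(d-1))=\chi_2(1-d)A(d)$, with the stated effect on types). Since each involution interchanges two \emph{disjoint} type classes, it is automatically a fixed-point-free bijection between them, so the equality of the three type counts follows with no case analysis of short orbits. What your approach buys is exactly this: the degenerate orbits disappear as a concern, and the equidistribution statement $N^{1,1}(A)=N^{1,-1}(A)=N^{-1,1}(A)$ is a slightly sharper intermediate result than the paper records. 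What the paper's orbit-by-orbit version buys is the simultaneous bookkeeping of $N(A)$ versus $N(-A)$ (it shows the $2{:}1$ ratio holds in each of $N(A)$ and $N(-A)$ separately), which it reuses when combining with $N_{n2}(A)=N_{n2}(-A)$; your argument yields that too, but only after also applying the involution $d\mapsto 1-d$, which you do not need for the theorem as stated.
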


\begin{proof}
We show that the result is true in each isomorphism class.
First, assume $j_L(d) \ne 0,1728$, so that each isomorphism class contains the 
six distinct elements in~(\ref{isovalues}). From Table 2 we have that for any 
square $d$, $L_d(\F_q)$ contains a subgroup of order $8$. Hence if $\#L_d(\F_q) = q + 1 - A \equiv 4
\pmod{8}$, by Lagrange's theorem we must have $N_{2}(A) = 0$. Hence all $d$ counted
by $N(A)$ are non-square, and since every unobstructed $A$
occurs, we have $N_{n2}(A) = N(A)$. This proves the first part of the theorem.
For the second part, we shall show that for each $A$ for which $q+1-A
\equiv 0 \pmod{8}$, square $d$ occur twice as frequently as non-square 
$d$ in the counts for both $N(A)$ and $N(-A)$. Abusing notation slightly, when $A(d,\F_q) = A$
we write $d \in N(A)$, and simlarly for $N(-A)$.

Let $\#L_d(\F_q) = q+1-A \equiv 0 \pmod{8}$. Then by Sylow's 1st theorem,
$L_d(\F_q)$ contains a subgroup of order $8$, and hence $L_d(\F_q)[8]$
contains at least $8$ points. By Table 2, we can not have
$\chi_2(d)=\chi_2(1-d)=-1$, since $L_d(\F_q)[4] \cong \Z_2 \times \Z_2 = L_d(\F_q)[2]$ and hence
$|L_d(\F_q)[2^i]| = 4$ for $i \ge 2$. Hence we have three possibilities for
$(\chi_2(d),\chi_2(1-d))$.

Let $\chi_2(d)=1$ with $d \in N(A)$. Then by Lemma~\ref{easyisos}(ii), $1/d \in
N(A)$ also. By Lemma~\ref{easyisos}(i), $1-d, 1-1/d \in N(-A)$. If
$\chi_2(1-d)=-1$ then by Lemma~\ref{easyisos}(ii) we have $1/(1-d) \in N(A)$, and 
$d/(d-1) \in N(-A)$. Hence $\{d,1/d,1/(1-d)\} \in N(A)$ and $\{1-d,1-1/d,d/(d-1)\} \in N(-A)$,
and there are two squares and a non-square in each set, as asserted.
If $\chi_2(1-d)=1$ then by Lemma~\ref{easyisos}(ii) we have instead $1/(1-d) \in
N(-A)$, and $d/(d-1) \in N(A)$. Hence $\{d,1/d,d/(d-1)\} \in N(A)$ and 
$\{1-d,1-1/d,1/(1-d)\} \in N(-A)$, and again there are two squares and a non-square in each set.
Finally, if $\chi_2(d)=-1$ and $\chi_2(1-d)=1$, by Lemma~\ref{easyisos} again
we see that if $d \in N(A)$ then $\{d,1-1/d,d/(d-1)\} \in N(A)$ and $\{1/d,1-d,1/(1-d)\} \in N(-A)$. 
In all cases $N_{2}(A) = 2N_{n2}(A)$ and $N_{2}(-A) = 2N_{n2}(-A)$, and the second part of the
result follows for these isomorphism classes.

If $j_L(d) = 1728$, \ie if $d = 2,1/2,-1$, it is easy to see that Lemma~\ref{easyisos}
implies that the trace of Frobenius is zero in all cases. Now $\chi_2(2) = -1$ if $q \equiv
3 \pmod 8$ and is $1$ if $q \equiv 7 \pmod{8}$. In the first case, $q+1 - 0
\equiv 4 \pmod{8}$ and this isomorphism class contributes three elements to
$N_{n2}(0)$ and hence $N(0)$.
In the second case $q+1 - 0 \equiv 0 \pmod{8}$ and this class contributes two
squares and one non-square to $N(0)$.

If $j_L(d) =0$ then $d^2-d+1 =0$, \ie $d$ and $1/d$ are primitive $6$-th roots
of unity over $\F_q$, which are in $\F_q$ iff $q \equiv 1 \pmod{6}$. Since $q \equiv -1 \pmod{4}$ we
must have $q \equiv 7 \pmod{12}$. In particular, $\F_q$ does not contain any
$12$-th roots of unity and hence $\chi_2(d) = -1$. 
Since $1-d = 1/d$, we have $\chi_2(1-d) = \chi_2(1/d) = -1$, and so by Table 2, $L_d(\F_q)[4] \cong \Z_2
\times \Z_2$ and hence $\#L_d(\F_q) = q+1-A \equiv 4 \pmod{8}$ by the above argument. 
By Lemma~\ref{easyisos}(ii), $A(d, \F_q) = - A(1/d,\F_q)$ and this isomorphism
class contributes one element to $N_{n2}(A)$ and hence $N(A)$, and
one element to $N_{n2}(-A)$ and hence $N(-A)$, whenever this isomorphism class is
defined over $\F_q$.
\end{proof}

Since by Lemma~\ref{ordinary} we have $N(A) > 0$ for every unobstructed
integer $A$ for a given $q$, we thus have the following.

\begin{cor}\label{cor}
Let $A$ be an unobstructed integer for $q$. Then there exists at least one quadratic non-residue $d \in \F_{q}
\setminus \{0,1\}$ such that $\#E_d(\fq) = q + 1 - A$, and hence there is a
complete Edwards curve in every isogeny class.
\end{cor}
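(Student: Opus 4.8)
The plan is to read the corollary off from the two ratio refinements just established, Theorem~\ref{1mod4nonsquare} and Theorem~\ref{notsquare2}, once its statement is translated into a statement about $N_{n2}$. The key translation is that a \emph{complete} Edwards curve lying in the isogeny class of trace $A$ is precisely a quadratic non-residue $d \in \F_q \setminus \{0,1\}$ with $A(d,\fq) = A$: by Bernstein and Lange~\cite{bl}, $E_d$ is complete if and only if $\chi_2(d) = -1$, while Theorem~\ref{mainisog} gives $E_d \sim L_d$ over $\fq$, and isogenous elliptic curves over $\fq$ have the same number of rational points, so $\#E_d(\fq) = \#L_d(\fq) = q+1 - A(d,\fq)$. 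The whole corollary therefore reduces to the single assertion that $N_{n2}(A) > 0$ for every unobstructed $A$, and by Lemma~\ref{ordinary} we already know $N(A) > 0$ for every such $A$.

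With this reduction I would split according to $q \bmod 4$. For $q \equiv -1 \pmod 4$, since $\#L_d(\fq)$ is always divisible by $4$ we have $q+1-A \equiv 0$ or $4 \pmod 8$, so Theorem~\ref{notsquare2} applies and gives $N_{n2}(A) \in \{\, N(A),\, N(A)/3 \,\}$, which is positive because $N(A) > 0$. For $q \equiv 1 \pmod 4$ an unobstructed $A$ satisfies $A \equiv q+1 \equiv 2 \pmod 4$, so $-A$ is unobstructed as well and $N(-A) > 0$ by Lemma~\ref{ordinary}; exactly one of $A,-A$ has $q+1-A \equiv 0 \pmod 8$, and applying Theorem~\ref{1mod4nonsquare} to that one gives $N_{n2}(A) = N_{n2}(-A) = N(-A) > 0$, so $N_{n2}$ is positive at the given trace. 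In either case a non-residue $d$ of trace $A$ exists and supplies a complete Edwards curve with $\#E_d(\fq) = q+1-A$; since each ordinary isogeny class is labelled by a unique unobstructed trace, every such class contains a complete Edwards curve.

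The step I expect to be the real obstacle is the scope of the phrase ``every isogeny class''. The argument above uses unobstructedness in an essential way (an unobstructed $A$ is prime to $p$), so it accounts precisely for the \emph{ordinary} classes and says nothing about the supersingular ones, which carry the traces $A=0$ or $A = \epsilon 2p^{k}$ of Lemma~\ref{supersingular}. For $q \equiv -1 \pmod 4$ the only supersingular trace is $A = 0$, and this case is in fact already disposed of inside the proof of Theorem~\ref{notsquare2}, where the isomorphism classes with $j_L(d) = 1728$ (namely $d = 2,1/2,-1$, supersingular here since then $p \equiv 3 \pmod 4$) are shown to contribute a non-residue of trace $0$. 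The genuinely separate case is $q = p^{2k}$, and here universality can actually fail: over $\F_9$ the unique supersingular class is represented only by $d=-1$, which is a square in $\F_9$, so that class contains no complete Edwards curve. I would therefore read the corollary as asserting a complete representative in every \emph{ordinary} isogeny class --- exactly what the unobstructed hypothesis delivers --- and treat the supersingular classes as a separate matter requiring direct control of $\chi_2$ on the roots of the Deuring polynomial.
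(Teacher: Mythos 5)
Your proof is correct and follows the paper's own route: the paper derives the corollary in exactly this way, from $N(A)>0$ for every unobstructed $A$ (Lemma~\ref{ordinary}) combined with the ratio statements of Theorems~\ref{1mod4nonsquare} and~\ref{notsquare2}, which force $N_{n2}(A)>0$ at every unobstructed trace. Your closing caveat is also well taken --- the hypothesis restricts $A$ to unobstructed (hence ordinary) traces, and your $\F_9$ example, where the unique supersingular parameter $d=-1$ is a square, correctly shows that the phrase ``every isogeny class'' cannot be extended to the supersingular classes without a separate argument about $\chi_2$ on the roots of the Deuring polynomial.
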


Theorems~\ref{1mod4nonsquare} and~\ref{notsquare2} allow one can compute $N_{n2}(A)$ given $N(A)$, which
 can be computed using Katz's Deuring-style class number formula~\cite{Katz-ratio}. In fact for 
$q \equiv 1 \pmod{4}$, the formula for $N(-A)$ is far simpler than that for $N(A)$, while for $q \equiv -1 \pmod{4}$,
$N(A)$ and $N_{n2}(A)$ are either equal or differ by a factor of $3$.

To conclude this section, we note that Morain has independently proven the following~\cite[Theorem 17]{morain}.

\begin{theorem}\label{morain}
Let $E(\F_p): y^2 = x^3 + a_2x^2 + a_4x + a_6$ have three $\F_p$-rational
$2$-torsion points. Then there exists a curve $E'(\F_p)$ isogenous to $E(\F_p)$ that
is birationally equivalent to a complete Edwards curve.
\end{theorem}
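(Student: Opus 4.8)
The plan is to reduce the statement to a pure existence question about Edwards parameters and then invoke the completeness results of the previous section. Write $A = A(E,\F_p)$ for the trace of Frobenius of $E$, so that $\#E(\F_p) = p+1-A$ and $|A| \le 2\sqrt{p}$ by Hasse--Weil. The only use I make of the hypothesis is this: three $\F_p$-rational $2$-torsion points means $E(\F_p) \supseteq \Z/2\Z \times \Z/2\Z$, whence $4 \mid \#E(\F_p)$ and therefore $A \equiv p+1 \pmod 4$. I then claim it suffices to produce a quadratic non-residue $d \in \F_p \setminus \{0,1\}$ with $A(d,\F_p) = A$. Indeed, for such a $d$ the Legendre curve $L_d$ has $\#L_d(\F_p) = p+1-A = \#E(\F_p)$, so $E$ and $L_d$ are isogenous by Tate's theorem; composing with the $2$-isogeny of Theorem~\ref{2isogeny} gives $E \sim L_d \sim E_d$. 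Since $d$ is a non-residue, $E_d$ is complete by the Bernstein--Lange criterion $\chi_2(d) = -1$, and its smooth projective model is birationally equivalent to the complete Edwards model~(\ref{edwards}); I take $E' = E_d$. Note that I never need to exhibit $E$ itself in Legendre form (it may well be a nontrivial twist of an $L_{d'}$): only its trace matters.

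Everything therefore hinges on exhibiting a non-residue $d$ of the prescribed trace. When $\gcd(A,p) = 1$ the curve $E$ is ordinary, and the two facts $A \equiv p+1 \pmod 4$ and $|A| \le 2\sqrt{p}$ say precisely that $A$ is unobstructed for $q = p$ in the sense of Lemma~\ref{ordinary}. Corollary~\ref{cor} then supplies exactly the required non-residue $d$, and the ordinary case is complete in one line.

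The remaining, and genuinely more delicate, case is when $p \mid A$, i.e.\ $E$ is supersingular; this is where Corollary~\ref{cor} cannot be applied directly, since $A$ is no longer prime to $p$ and hence not unobstructed. Over the prime field with $p > 3$, supersingularity forces $A = 0$, so $\#E(\F_p) = p+1$, and the full rational $2$-torsion forces $p \equiv 3 \pmod 4$. Now the supersingular parameters are exactly the roots in $\F_p$ of the Hasse--Deuring polynomial, there are $S_p > 0$ of them by Theorem~\ref{supersingular-p}, and they are permuted by the group $H$ because supersingularity is a geometric property preserved by the isomorphisms $\sigma_1,\sigma_2$. The main work is to show that at least one of these parameters is a non-residue, and this is furnished by the orbit analysis in the proof of Theorem~\ref{notsquare2} specialised to $A = 0$ (so that $N(A)$ and $N(-A)$ coincide): for $p \equiv 3 \pmod 8$ one has $p+1 \equiv 4 \pmod 8$ and Table~2 together with Lagrange's theorem forces every supersingular $d$ to be a non-residue, while for $p \equiv 7 \pmod 8$ one has $p+1 \equiv 0 \pmod 8$ and residues and non-residues occur in each orbit in the ratio $2:1$ (the special orbits with $j_L(d) \in \{0,1728\}$ being checked separately, exactly as in that proof). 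In either sub-case a non-residue supersingular parameter exists; for $p = 3$ one simply notes that the unique supersingular value $d = 2$ is a non-residue. The main obstacle is thus not the ordinary case --- which is immediate from Corollary~\ref{cor} --- but this supersingular bookkeeping, where one must re-run the $2$-torsion/orbit argument of Theorem~\ref{notsquare2} for the obstructed trace $A = 0$ to guarantee a complete representative.
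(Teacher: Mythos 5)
Your proof is correct, but note that the paper does not actually prove this statement: Theorem~\ref{morain} is quoted from Morain, whose (constructive) proof proceeds by analysing the volcano of $2$-isogenies of $E$ and explicitly locating isomorphism classes of complete Edwards curves within it. You instead derive the statement from the paper's own machinery, effectively reversing the implication the authors point out --- they observe that Morain's theorem (together with Theorem~\ref{mainisog}) implies Corollary~\ref{cor}, whereas you show that Corollary~\ref{cor} implies Morain's theorem once traces are matched via Tate. Your reduction is sound: full rational $2$-torsion gives $A \equiv p+1 \pmod 4$, the ordinary case is precisely the unobstructed situation covered by Corollary~\ref{cor}, and you correctly identify that the supersingular case ($p\mid A$, hence $A=0$ and $p \equiv 3 \pmod 4$ for $p>3$) is the genuine gap, since Corollary~\ref{cor} is silent on obstructed traces; re-running the $2$-torsion/orbit argument of Theorem~\ref{notsquare2} for $A=0$, together with $S_p>0$ from Theorem~\ref{supersingular-p}, does produce the required non-residue parameter. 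Two remarks: the supersingular bookkeeping admits a one-line shortcut, since for $p \equiv 3 \pmod 4$ the parameter $d=-1$ is always supersingular ($L_{-1}: y^2 = x^3 - x$) and is automatically a quadratic non-residue, so $E_{-1}$ is already a complete Edwards curve in the unique supersingular isogeny class; and your argument, unlike Morain's, is a pure existence/counting argument and does not exhibit $E'$ or the isogeny $E \to E'$ explicitly, which is the feature of Morain's proof that the authors emphasise.
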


Therefore, if such a curve $E(\F_q)$ exists in every isogeny class
whose group order is necessarily divisible by $4 = |E(\F_q)[2]|$, then
Theorem~\ref{morain} implies Corollary~\ref{cor}; Theorem~\ref{mainisog}
provides the missing condition. Furthermore, Morain's proof is constructive, in that from such a curve $E$ one can 
explicitly compute a set of isomorphism classes of complete Edwards curves, 
based on the structure of the volcano of $2$-isogenies of $E$.


\section{Isogeny classes of {\em original} Edwards curves}

As stated in \S\ref{Eiso}, curves in Edwards' original normal form~(\ref{original}) are
isomorphic to the Bernstein-Lange form~(\ref{edwards}) via $(x,y) \mapsto
(ax,ay)$, with $d = a^4$. Two natural questions to consider are whether or not there exists an original
Edwards curve in every isogeny class, and more specifically how are the original Edwards curves
distributed amongst the isogeny classes? In this section we present answers to
both these questions.

We begin with some definitions. For any unobstructed $A$ for $q$, let $N_{4}(A)$
and $N_{2n4}(A)$ be the number of $d \in N(A)$ that are fourth powers, and
squares but not fourth powers, respectively. For any such $A$ we thus have
\begin{equation}\label{partition}
N(A) = N_{n2}(A) + N_{2n4}(A) + N_4(A).
\end{equation}
Furthermore let $\chi_4(\cdot)$ denote a primitive biquadratic character of $\F_q$,
so that $\chi_4(d) = 1$ if and only if there exists an $a \in \F_q$ such that
$d = a^4$.

\subsection{Determining $L_d(\F_q)[8]$}

In the ensuing treatment, we will need to know the possible $8$-torsion subgroups of $L_d(\F_q)$.
The structure of the $4$-torsion was determined by analysing the halvability
of the $2$-torsion points, using Lemma~\ref{2descent}. Similarly, one can
apply Lemma~\ref{2descent} to the elements of $L_d(\F_q)[4] \setminus
L_d(\F_q)[2]$ to determine the structure of the $8$-torsion.

Over the algebraic closure of $\F_q$ there are twelve points of order
four; two for each of the three $2$-torsion points $(0,0),(1,0)$ and $(d,0)$:
\begin{eqnarray*}
P_{(0,0),\pm} &=& (\pm \sqrt{d},\sqrt{-1} \sqrt{d}(1 \mp \sqrt{d})),\\ 
P_{(1,0),\pm} &=& (1 \pm \sqrt{1-d},\sqrt{1-d}(1 \pm \sqrt{1-d})),\\ 
P_{(d,0),\pm} &=& (d \pm \sqrt{d(d-1)}, \sqrt{d(d-1)}(\sqrt{d} \pm \sqrt{d-1})),
\end{eqnarray*}
along with their negatives (note that one can also prove Lemma~\ref{4torsion}
using these expressions). Applying Lemma~\ref{2descent} to these points gives:

\begin{lemma}\label{8torsion}
The following conditions are both necessary and sufficient for the
points $P_{(0,0),\pm}, P_{(1,0),\pm}$ and $P_{(d,0),\pm}$ respectively, to be halvable: 
\begin{itemize}
\item[(i)] $P_{(0,0),\pm} \in 2L_d(\F_q) \Longleftrightarrow
\pm\sqrt{d},\pm\sqrt{d}-1,\pm \sqrt{d} -d \in (\F_{q}^{\times})^2$,
\item[(ii)] $P_{(1,0),\pm} \in 2L_d(\F_q) \Longleftrightarrow
1 \pm \sqrt{1-d},\pm \sqrt{1-d},1 \pm \sqrt{1-d} -d \in (\F_{q}^{\times})^2$,
\item[(iii)] $P_{(d,0),\pm} \in 2L_d(\F_q) \Longleftrightarrow
d \pm \sqrt{d(d-1)},d \pm \sqrt{d(d-1)}-1,\pm \sqrt{d(d-1)} \in (\F_{q}^{\times})^2$.
\end{itemize}
\end{lemma}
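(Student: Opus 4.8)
The plan is to extract a single halvability criterion from the 2-descent homomorphism $\phi$ of Lemma~\ref{2descent} and then substitute. For $L_d$ we have $(\alpha,\beta,\gamma)=(0,1,d)$, so $\phi(x,y)=(x,\,x-1,\,x-d)$ for $y\neq 0$, and since $\ker\phi = 2L_d(\F_q)$, a point $(x,y)\in L_d(\F_q)$ with $y\neq 0$ lies in $2L_d(\F_q)$ --- i.e.\ is halvable --- precisely when each of $x$, $x-1$, $x-d$ is a nonzero square in $\F_q$. This is the only structural input required; everything after it is computation.

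First I would observe that these three squareness conditions are self-contained with respect to $\F_q$-rationality, so that no separate hypothesis that the point be rational is needed. Indeed, if $x$, $x-1$, $x-d$ all lie in $(\F_q^{\times})^2$ then in particular $x\in\F_q$, and the product $x(x-1)(x-d)=y^2$ is a square, forcing $y\in\F_q$; hence the point is automatically rational and $\phi$ applies, giving $\phi(x,y)=(1,1,1)$ and halvability. Conversely membership in $2L_d(\F_q)$ already presupposes rationality and forces $\phi$ to vanish. I would also note that for $d\neq 0,1$ none of the six order-four $x$-coordinates equals $0,1$ or $d$, so each point has $y\neq 0$ and order exactly $4$; a rational halving therefore yields a point of order $8$, which is why this criterion pins down $L_d(\F_q)[8]$.

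Next I would carry out the substitution for each of the three $x$-coordinates $\pm\sqrt{d}$, $1\pm\sqrt{1-d}$ and $d\pm\sqrt{d(d-1)}$, simplifying $x-1$ and $x-d$. For $P_{(0,0),\pm}$ one reads off $x-1=\pm\sqrt{d}-1$ and $x-d=\pm\sqrt{d}-d$, yielding (i); for $P_{(1,0),\pm}$ one gets $x-1=\pm\sqrt{1-d}$ and $x-d=1\pm\sqrt{1-d}-d$, yielding (ii); and for $P_{(d,0),\pm}$ one gets $x-d=\pm\sqrt{d(d-1)}$ and $x-1=d\pm\sqrt{d(d-1)}-1$, yielding (iii).

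I expect no genuine obstacle: the content lies entirely in recognising the halvability criterion coming from Lemma~\ref{2descent}, after which the result is a direct substitution. The one place deserving a word of care is the rationality remark above, which is what upgrades the three squareness conditions from merely necessary for $P\in 2L_d(\F_q)$ to necessary and sufficient.
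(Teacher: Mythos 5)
Your proposal is correct and follows exactly the paper's route: the paper simply applies Lemma~\ref{2descent} with $(\alpha,\beta,\gamma)=(0,1,d)$ to the listed order-$4$ points, which is precisely your substitution of $x$, $x-1$, $x-d$ at $x=\pm\sqrt{d}$, $1\pm\sqrt{1-d}$, $d\pm\sqrt{d(d-1)}$. Your additional remark that the three squareness conditions already force $\F_q$-rationality of the point (so the criterion is genuinely an equivalence) is a detail the paper leaves implicit, and it is correct.
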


\subsection{The case $q \equiv -1 \pmod{4}$}

This is the simplest case, giving rise to the following theorem:

\begin{theorem}\label{pm1main}
If $q \equiv -1 \pmod{4}$, then the following holds:
\begin{itemize}
\item[(i)] Let $a^4 \in \F_q \setminus \{0,1\}$. Then $\#L_{a^4}(\F_q) = p+1-A \equiv 0 \pmod{8}$.

\item[(ii)] Conversely, if $q+1-A \equiv 0 \pmod{8}$ then there exists $a^4 \in
  \F_q \setminus \{0,1\}$ such that $\#L_{a^4}(\F_q) = q+1-A$.

\item[(iii)] If $q+1 - A \equiv 0 \pmod{8}$ then $N_{4}(A) = N_{2}(A) = 2N(A)/3$.
\end{itemize}
\end{theorem}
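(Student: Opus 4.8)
The plan is to exploit a feature peculiar to $q \equiv -1 \pmod{4}$: the fourth powers in $\F_q^{\times}$ coincide with the squares. Indeed, $\F_q^{\times}$ is cyclic of order $q-1 \equiv 2 \pmod{4}$, so $\gcd(4,q-1)=2$ and the subgroup of fourth powers has index $2$, i.e. equals the subgroup of squares; equivalently, the group of squares has odd order $(q-1)/2$, on which squaring is an automorphism, so every square is itself a square of a square. Consequently $\chi_4(d)=1 \Longleftrightarrow \chi_2(d)=1$, and therefore $N_4(A)=N_2(A)$ and $N_{2n4}(A)=0$ for every $A$. This collapses all three statements to facts about \emph{squares}, which have already been established.

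For part (i), I would write $d=a^4 \ne 0,1$; then $d$ is a nonzero square, so $\chi_2(d)=1$. By Lemma~\ref{4torsion} (Table 2), whenever $\chi_2(d)=1$ — that is, in the rows $(\chi_2(d),\chi_2(1-d))=(1,1)$ and $(1,-1)$ — one has $L_d(\F_q)[4] \cong \Z/4\Z \times \Z/2\Z$, a subgroup of order $8$. Lagrange's theorem then forces $8 \mid \#L_d(\F_q)=q+1-A$, which is the assertion (the displayed $p+1-A$ being a slip for $q+1-A$). For part (iii), I would simply combine the opening observation with Theorem~\ref{notsquare2}: in the regime $q+1-A \equiv 0 \pmod{8}$ that theorem gives $N_{n2}(A)=N(A)/3$, whence $N_2(A)=N(A)-N_{n2}(A)=2N(A)/3$, and then $N_4(A)=N_2(A)=2N(A)/3$ by $\chi_4=\chi_2$. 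Thus (i) and (iii) cost essentially nothing once Table 2 and Theorem~\ref{notsquare2} are in hand.

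Part (ii) is the converse, and is where the care lies. Given $q+1-A\equiv 0 \pmod{8}$, this forces $A \equiv q+1 \pmod{4}$. For $A$ prime to $p$ with $|A|\le 2\sqrt{q}$ — i.e. unobstructed — Lemma~\ref{ordinary} guarantees $N(A)>0$, and then part (iii) yields $N_4(A)=2N(A)/3>0$, producing a fourth power $a^4=d \in \F_q \setminus \{0,1\}$ with $\#L_{a^4}(\F_q)=q+1-A$. The only trace not covered this way is the supersingular value $A=0$ (Lemma~\ref{supersingular}(i)); it lands in the present regime precisely when $q\equiv 7 \pmod{8}$, which forces $p \equiv 7 \pmod{8}$, and then Theorem~\ref{supersingular-p}(iii) yields $S_p=3h(-p)>0$ supersingular parameters in $\F_p \subseteq \F_q$, so $N(0)>0$ and part (iii) again supplies a fourth-power parameter.

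The main obstacle is therefore concentrated in part (ii): one must guarantee that every relevant trace $A$ is actually realised by a Legendre parameter before invoking $N_4(A)=2N(A)/3$. For ordinary $A$ this is exactly Lemma~\ref{ordinary}, but the supersingular trace $A=0$ must be handled separately; reassuringly, the congruence $q\equiv 7 \pmod{8}$ needed to place $A=0$ in this regime already forces $p\equiv 7 \pmod{8}>3$, so the count $S_p=3h(-p)$ is divisible by $3$, consistent with $N_4(0)=2N(0)/3$ being an integer. Everything else — reading off the $4$-torsion from Table 2 and verifying $\chi_4=\chi_2$ — is routine.
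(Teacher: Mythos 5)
Your proposal is correct and follows essentially the same route as the paper's proof: part (i) via Table 2 and Lagrange, part (iii) by combining Theorem~\ref{notsquare2} with the observation that squaring is an automorphism of the group of squares when $q\equiv -1\pmod 4$ (so fourth powers coincide with squares), and part (ii) by citing Lemma~\ref{ordinary} for non-emptiness and then invoking (iii). Your separate treatment of the supersingular trace $A=0$ (forced into this regime only when $q\equiv 7\pmod 8$, whence $p\equiv 7\pmod 8$ and $S_p=3h(-p)>0$) is a small but genuine improvement, since the paper's proof of (ii) quietly restricts to unobstructed $A$.
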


\begin{proof}
Since $a^4$ is a square, by Table 2 we have 
$L_{a^4}(\F_q)[4] \cong \Z_4 \times \Z_2$, 
and hence by Lagrange's theorem we have $8 \mid \#L_{a^4}(\F_q)$. This proves $(i)$. 
Now let $A$ be any unobstructed integer satisfying $q+1 - A \equiv 0 \pmod{8}$,
and consider the set of all curves $L_d(\F_q)$ counted by $N(A)$. By Lemma~\ref{ordinary}
this set is non-empty. By Theorem~\ref{notsquare2} we have $N_{2}(A) =
2N(A)/3$. Furthermore, since $q \equiv -1 \bmod{4}$, the map $x^2 \mapsto x^4$ is an automorphism of the set
of squares in $\F_q \setminus \{0,1\}$, and hence 
$N_{4}(A) = N_{2}(A)$. This proves $(iii)$ and hence $(ii)$.
\end{proof}

\subsection{The case $q \equiv 1 \pmod{4}$}

We have the following theorem, which is proven in the remainder of this section:

\begin{theorem}\label{p14main}
If $q \equiv 1 \pmod{4}$, then the following holds:
\begin{itemize}
\item[(i)] Let $a^4 \in \F_q \setminus \{0,1\}$. Then $\#L_{a^4}(\F_q) = q+1-A \equiv 0 \pmod{16}$.

\item[(ii)] Conversely, if $q+1-A \equiv 0 \pmod{16}$ then there exists $a^4 \in
  \F_q \setminus \{0,1\}$ such that $\#L_{a^4}(\F_q) = q+1-A$.

\item[(iii)] If $q+1 - A \equiv 0 \pmod{16}$ then $N_{4}(A) = N(A) - 2N(-A)$.
\end{itemize}
\end{theorem}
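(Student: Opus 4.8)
The plan is to prove (i) and (iii) directly and to deduce (ii) from (iii) together with a positivity estimate. Throughout, since $16\mid q+1-A$ forces $A\equiv 2\pmod 4$ and $q+1-A\equiv 0\pmod 8$ while $q+1+A\equiv 4\pmod 8$, Theorem~\ref{1mod4nonsquare} gives $N_{n2}(A)=N(-A)$; combined with the partition~(\ref{partition}) this reduces (iii) to the single identity $N_{2n4}(A)=N(-A)$.

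First I would establish (i). Since $d=a^4$ is a square, Table~1 applies. If $\chi_2(1-d)=1$ then $L_d(\F_q)[4]\cong\Z_4\times\Z_4$ and $16\mid\#L_d(\F_q)$ immediately. If $\chi_2(1-d)=-1$ then $L_d(\F_q)[4]\cong\Z_4\times\Z_2$, and I would produce a point of order $8$ using Lemma~\ref{8torsion}(i) with $\sqrt d=a^2$: as $\chi_2(-1)=1$ one checks that $P_{(0,0),+}$ is halvable iff $\chi_2(1-a^2)=1$ and $P_{(0,0),-}$ is halvable iff $\chi_2(1+a^2)=1$, while $\chi_2(1-a^2)\chi_2(1+a^2)=\chi_2(1-a^4)=\chi_2(1-d)=-1$. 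Hence exactly one of $P_{(0,0),\pm}$ is halvable, so $L_d(\F_q)[8]\supseteq\Z_8\times\Z_2$ and again $16\mid\#L_d(\F_q)$. For (ii), once (iii) is known it suffices to note $N_4(A)=N(A)-2N(-A)=(r-2)N(-A)$ where $r=N(A)/N(-A)$; since $16\mid q+1-A$ gives $\mathrm{ord}_2(q+1-A)\ge 4$, Theorems~\ref{katz2.8} and~\ref{katz2.11} yield $r>2$ in every case, and $N(-A)>0$ by Lemma~\ref{ordinary} (as $-A$ is also unobstructed), so $N_4(A)>0$.

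The substance is (iii), for which I would partition $\F_q\setminus\{0,1\}$ into the $H$-orbits~(\ref{isovalues}) and argue class by class, in the spirit of the proof of Theorem~\ref{notsquare2}. For a generic orbit ($j_L\neq 0,1728$) the six values of $\chi_2$ are $\chi_2(d),\chi_2(1-d),\chi_2(d)\chi_2(1-d)$, each occurring twice, so exactly two orbit types arise: \emph{Type I}, with $\chi_2(d)=\chi_2(1-d)=1$, in which all six parameters are squares and (by Table~1) $L_d(\F_q)[4]\cong\Z_4\times\Z_4$, so all six lie at trace $A$; and \emph{Type II}, with precisely one inverse pair $\{e,1/e\}$ of squares and four non-squares. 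Representing a Type II orbit by its square parameter $e$ (so $\chi_2(e)=1,\ \chi_2(1-e)=-1$), Lemma~\ref{easyisos} places $\{e,1/e\}$ and the non-square pair $\{1-e,1-1/e\}$ at trace $A(e)$ and the remaining $\Z_2\times\Z_2$-type pair $\{1/(1-e),e/(e-1)\}$ at trace $-A(e)$. Since $q+1+A\equiv 4\pmod 8$, every trace-$(-A)$ parameter is of $\Z_2\times\Z_2$ type and hence sits in such a Type II orbit whose square parameter satisfies $16\mid\#L_e$, i.e. $A(e)=A$ (a \emph{target} orbit); thus $N(-A)=2\cdot\#\{\text{Type II target orbits}\}$, while contributions to $N_{2n4}(A)$ come from the square pairs of Type II target orbits that are \emph{not} fourth powers and from the non-fourth-power pairs of Type I orbits. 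Bookkeeping then collapses the desired identity $N_{2n4}(A)=N(-A)$ to the balancing statement
\[
\sum_{\text{Type I orbits at trace }A}\#\{\text{non-fourth-power pairs}\}=\#\{\text{Type II target orbits with fourth-power square parameter}\}.
\]

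The hard part will be proving this last balancing identity. The plan is to translate ``$d$ is a fourth power'' into halvability data via Lemma~\ref{8torsion}: for a square $d$ one has $\chi_4(d)=\chi_2(\sqrt d)$, and the halvability of the order-$4$ points $P_{(0,0),\pm},P_{(1,0),\pm},P_{(d,0),\pm}$ detects how the fourth-power class of $d,1-d$ and $1-1/d$ is distributed around an orbit. I would then construct an explicit bijection between the two sides, using the maps $\sigma_1,\sigma_2$ and inversion of Lemma~\ref{easyisos} together with the explicit $4$-torsion coordinates, matching a non-halvable ($\Z_4\times\Z_4$, non-fourth-power) configuration in a Type I orbit with a Type II orbit that acquires a rational point of order $8$ above $(0,0)$, i.e. a genuine fourth-power square parameter. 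I expect this matching to require separating the cases $q\equiv 1\pmod 8$ and $q\equiv 5\pmod 8$, since $\chi_2(\sqrt{-1})=\chi_4(-1)$ enters when comparing the biquadratic characters of $d$, $1-d$ and $1-1/d$; the uniform formula in (iii) should reappear after recombining the two cases. Finally I would dispose of the special orbits $j_L\in\{0,1728\}$ separately, exactly as in the proof of Theorem~\ref{notsquare2}, checking that they respect the count.
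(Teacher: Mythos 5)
Your treatment of parts (i) and (ii) is correct and coincides with the paper's: for (i) you use Table~1 together with Lemma~\ref{8torsion}(i) to produce a rational point of order $8$ when $\chi_2(1-d)=-1$ (the factorisation $1-d=(1-a^2)(1+a^2)$ forcing exactly one of $P_{(0,0),\pm}$ to be halvable is exactly the paper's argument), and for (ii) you correctly combine (iii) with Theorems~\ref{katz2.8} and~\ref{katz2.11} to get $r>2$, hence $N_4(A)=(r-2)N(-A)>0$. Your reduction of (iii) to $N_{2n4}(A)=N(-A)=N_{n2}(A)$ via Theorem~\ref{1mod4nonsquare} and the partition~(\ref{partition}) is also the paper's reduction, and your orbit bookkeeping (Type I versus Type II orbits, $N(-A)=2\cdot\#\{\text{Type II target orbits}\}$) is sound as far as it goes.

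The genuine gap is that you never prove the balancing identity to which you have reduced everything. The entire content of part (iii) lives in that identity, and your proposal for it is only a declaration of intent: ``I would then construct an explicit bijection \dots I expect this matching to require separating the cases \dots''. No map is exhibited, no injectivity is checked, and it is not at all evident that the matching you describe (Type I non-fourth-power configurations against Type II orbits with a rational $8$-torsion point above $(0,0)$) can be realised by the isomorphisms $\sigma_1,\sigma_2$ alone --- those preserve isomorphism classes and cannot move a parameter from one $H$-orbit to another, whereas your identity compares counts across \emph{different} orbits. The paper gets across this obstacle by leaving the isomorphism class: it constructs a two-way injection between $N_{2n4}(A)$ and $N_{n2}(A)$ out of genuine $2$-isogenies, namely $\xi_d:L_d\rightarrow L_d/\langle(0,0)\rangle$ computed with V\'elu's formula, followed by the Legendre isomorphisms of the image curve; this sends the pair $\{d,1/d\}\subset N_{2n4}(A)$ to $\{\lambda_1(d),\lambda_2(d)\}=\{\mp 4\sqrt{d}/(1\mp\sqrt{d})^2\}$, which lie in $N_{n2}(A)$ precisely because $\chi_4(d)\neq 1$; the dual direction uses $L_e/\langle(1,0)\rangle$ and $\mu_i(e)=\bigl((1\mp\sqrt{1-e})/(1\pm\sqrt{1-e})\bigr)^2$, which are squares but not fourth powers because $\chi_2(e)=-1$. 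Injectivity on pairs and the exceptional cases $d=-1$, $e=2$ are then checked explicitly. Without this (or some equivalent) construction, your proof of (iii) --- and hence of (ii) --- is incomplete.
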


Note that the implication in $(iii)$ is equivalent to $N_4(A)/N(A) = 1 - 2/r$, where $r$ is Katz's
ratio $N(A)/N(-A)$. Using Theorem~\ref{1mod4nonsquare} and~(\ref{partition}),
this is equivalent to $N_4(A) = N_{n2}(A) + N_{2n4}(A) + N_{4}(A) - 2N_{n2}(A)$, or 
\begin{equation}\label{mainbijection}
N_{2n4}(A) = N_{n2}(A). 
\end{equation}
Equation~(\ref{mainbijection}) in fact holds for all $A$ such that $q+1 - A \equiv
0 \pmod{8}$, and seems to be non-trivial. We will prove it by constructing a bijection between
the sets of curve parameters of each type. Once this equality is proven, part $(ii)$ follows easily.

The idea behind the proof of Equation~(\ref{mainbijection}) is a natural extension of the bijection-based proofs
of \S\ref{complete} , which used the isomorphisms given in Lemma~\ref{easyisos}. Rather than use
isomorphisms defined over $\F_q$, which are isogenies of degree one, we use 
isogenies of degree two. In particular we consider the isomorphism classes 
of curves arising from two $2$-isogenies of $L_d$: the first being ``divide by the $\Z/2\Z$ generated
by $(0,0)$'' when $d \in N_{2n4}(A)$, and the second being ``divide by the $\Z/2\Z$ generated
by $(1,0)$'' when $d \in N_{n2}(A)$, which are dual to one another. 
We begin with a short proof of part $(i)$.
\vspace{4mm}
\newline
\noindent 
{\em Proof of (i):} Let $d = a^4$. Since $\chi_2(d) = 1$, by Table 1, if $\chi_2(1-d)=1$ then
$L_d(\F_q)[4] \equiv \Z_4 \times \Z_4$ and hence $16 \mid \#L_d(\F_q)$. If
$\chi_2(1-d)=-1$ then by Table 1 neither of $(1,0)$ or $(d,0)$ are
halvable, and we claim that precisely one of $P_{(0,0),\pm}$ is
halvable. As $\chi_2(-1) = 1$, by Lemma~\ref{8torsion}, $P_{(0,0),+}$ is halvable if and only if $\sqrt{d},
\sqrt{d}-1$ are both square, while $P_{(0,0),-}$ is halvable if and only if $-\sqrt{d},
-\sqrt{d}-1$ are both square. Since $\chi_4(d) = 1$, both $\pm \sqrt{d}$ are
square. Furthermore, as $1-d = (1+\sqrt{d})(1-\sqrt{d}) =
(-\sqrt{d}-1)(\sqrt{d}-1)$, precisely one of these factors is square as
$\chi_2(1-d)=-1$ by assumption. This gives rise
to a point of order $8$. Therefore $\L_d(\F_q)[8] \cong \Z_8 \times \Z_2$ 
and hence $16 \mid \#L_d(\F_q)$ in this case too. This completes the proof of $(i)$.
\vspace{4mm}
\newline
We now exhibit a bijection to prove~(\ref{mainbijection}), assuming $q+1-A
\equiv 0 \pmod{8}$. 

\begin{lemma}
Let $A$ satisfy $q+1 - A \equiv 0 \pmod{8}$. Then there exists an injection
from $N_{2n4}(A)$ to $N_{n2}(A)$.
\end{lemma}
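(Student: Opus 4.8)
The plan is to build the injection explicitly from the $2$-isogeny ``divide $L_d$ by $\langle (0,0)\rangle$''. Writing $L_d : y^2 = x^3 - (1+d)x^2 + dx$ and applying the standard $2$-isogeny formula for the kernel $\{\mathcal{O},(0,0)\}$, the image curve is $Y^2 = X\big(X + (1-\sqrt d)^2\big)\big(X + (1+\sqrt d)^2\big)$, whose nonzero $2$-torsion lies at $X = 0$ (the common image of $(1,0)$ and $(d,0)$, generating the kernel of the dual) and at $X = -(1 \mp \sqrt d)^2$. Since $d$ is a square these roots are rational, so after the Legendre normalisation sending the marked root $X=0$ to $1$ one obtains $L_{\bar{d}_1(d)}$ with $\bar{d}_1(d) = \big(\tfrac{1+\sqrt d}{1-\sqrt d}\big)^2$, exactly the quantity from \S\ref{2isog}. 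I therefore propose the map $f(d) = 1 - \bar{d}_1(d)$.

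First I would verify that $f$ lands in $N_{n2}(A)$. As the isogeny is defined over $\F_q$, its image has the same number of $\F_q$-points as $L_d$, hence trace $A$; and because the root difference used in the normalisation, namely $(1-\sqrt d)^2$, is a square, the normalised model is $\F_q$-isomorphic (not merely $\overline{\F_q}$-isomorphic) to the image, so $1-\bar{d}_1(d) \in N(A)$. A direct computation gives $1 - \bar{d}_1(d) = -4\sqrt d/(1-\sqrt d)^2$, whence $\chi_2(1 - \bar{d}_1(d)) = \chi_2(-1)\,\chi_2(\sqrt d)$. Here $\chi_2(-1) = 1$ as $q \equiv 1 \pmod 4$, while $\chi_2(\sqrt d) = -1$ because $d$ is a square but not a fourth power; thus $f(d)$ is a non-square and $f(d) \in N_{n2}(A)$. (The hypothesis $8 \mid q+1-A$ is automatic: any square $d$ has $L_d(\F_q)[4] \supseteq \Z/4\Z \times \Z/2\Z$ by Table~1, so it merely records that $N_{2n4}(A)$ can be non-empty.)

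The hard part will be injectivity. The two roots $\pm\sqrt d$ produce the two values $1 - \bar{d}_1(d)$ and $1 - \bar{d}_1(d)^{-1}$, and $\bar{d}_1(d) = \bar{d}_1(1/d)$ under a reciprocal choice of roots; this reflects the $\F_q$-isomorphism $\sigma_2 : L_d \cong L_{1/d}$ fixing $(0,0)$, so a careless definition makes $f$ two-to-one on each pair $\{d,1/d\}$. I would resolve this via the dual isogeny, which by construction is ``divide by $\langle(1,0)\rangle$'': applied to $L_{1-\bar{d}_1(d)}$ it returns $L_{1/d}$ (equivalently $\bar{d}_2(1-\bar{d}_1(d)) = 1/d$ in the notation of \S\ref{2isog}), so the unordered pair $\{d,1/d\}$ is recovered from the image. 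Hence the correspondence is injective on $\{d,1/d\}$-orbits, landing in the $\{e,\,e/(e-1)\}$-orbits of $N_{n2}(A)$ (the two non-square Legendre parameters of a common curve, interchanged by $\sigma_1\sigma_2\sigma_1$). As both orbit types generically have size two, choosing a representative in each fibre lifts this to the desired injection; concretely, fixing any square-root function that is \emph{anti}-reciprocal on pairs, $s(1/d) = -1/s(d)$, makes $f(d) = 1 - \big(\tfrac{1+s(d)}{1-s(d)}\big)^2$ injective, since a one-line computation shows $\bar{d}_1(d_1) = \bar{d}_1(d_2)$ then forces $d_1 = d_2$.

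Finally I would dispose of the finitely many exceptional $d$ with $j_L(d) \in \{0,1728\}$, where the orbits degenerate to fixed points (for instance $d = -1 \mapsto e = 2$ when $q \equiv 5 \pmod 8$); these match up by direct inspection exactly as in the proof of Theorem~\ref{notsquare2}, so the injection extends across them.
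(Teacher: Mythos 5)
Your proposal is correct and follows essentially the same route as the paper's proof: both divide $L_d$ by $\langle(0,0)\rangle$, renormalise the image to the two $\F_q$-rational Legendre parameters $1-\bar{d}_1(d)^{\pm 1}=\mp 4\sqrt{d}/(1\mp\sqrt{d})^2$ (the paper's $\lambda_1,\lambda_2$), verify membership in $N_{n2}(A)$ by the same quadratic-character computation using $\chi_2(\sqrt d)=-1$, and derive injectivity from the $\{d,1/d\}$ pairing, including the degenerate case $d=-1$, $q\equiv 5\pmod 8$. Your device of fixing an anti-reciprocal square root is just a repackaging of the paper's pairs-to-pairs argument.
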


\begin{proof}
Note that if $d \in N_{2n4}(A)$ then by Table 1 we necessarily have $q+1-A \equiv 0
\pmod{8}$. Let $\xi_d: L_d \rightarrow L_d/\langle (0,0) \rangle$ and let $E^d = \xi_d(L_d)$.
Using V\'{e}lu's formula~\cite{velu}, $E^d$ has equation $y^2 = x^3 - (d+1)x^2 - 4dx + 4d(d+1) = (x - (d+1))(x -
2\sqrt{d})(x + 2\sqrt{d})$, and
\[
\xi_d(x,y) = ( x + d/x, y(1 - d/x^2)).
\]
In particular, $(1,0),(d,0) \in L_d$ are both mapped to $(d+1,0) \in E^d$, and
hence $L_d$ is isomorphic to $E^d/\langle (d+1,0) \rangle$.

Labelling the abscissae of the order $2$ points of $E^d$ by $e_1= d+1,e_2=2\sqrt{d}$ and
$e_3=-2\sqrt{d}$, one sees~(\cite{washington}) that $E^d$ has six isomorphic Legendre curves, each given by a permutation
of $(e_1,e_2,e_3)$ with paramater $\lambda = (e_3 - e_1)/(e_2 - e_1)$, and 
\[
(x,y) \mapsto \bigg( \frac{x-e_1}{e_2-e_1}, \frac{y}{(e_2-e_1)^{3/2}} \bigg).
\]
Each of these isomorphisms is defined over $\F_q$ if and only if $\lambda \in \F_q$ and
$\chi_2(e_2 - e_1) =1$~\cite{washington}. For $d \in N_{2n4}(A)$, the two $E^d$-isomorphic Legendre curves used in
the bijection are given in Table 3.

\begin{table}[here]
\caption{$L_d/\langle (0,0) \rangle$-isomorphic Legendre curves in $N_{n2}(A)$
  for $d \in N_{2n4}(A)$}\label{tab3}
\begin{center}
\begin{tabular}{c|c|c|c}
\hline
$(e_1,e_2,e_3)$     & $\lambda$ & $(e_2-e_1)$ & $\chi_2(e_2-e_1)$ \\
\hline
$(2\sqrt{d},d+1, -2\sqrt{d})$  & $\lambda_1 = \frac{-4\sqrt{d}}{(1-\sqrt{d})^2}$ 
& $(1-\sqrt{d})^2$ & $1$ \\
$(-2\sqrt{d},d+1,2\sqrt{d})$  & $\lambda_2 =\frac{4\sqrt{d}}{(1+\sqrt{d})^2}$
& $(1+\sqrt{d})^2$ & $1$ \\
\hline
\end{tabular}
\end{center}
\end{table}

Observe that $\lambda_1(d),\lambda_2(d) \in N_{n2}(A)$ since $\chi_4(d) \ne 1$.  
Note also that $\lambda_1 = 1 - \delta$, with $\delta$ as given in \S\ref{Eiso}, and hence 
this isomorphism class is precisely that of $E_d$; indeed we have $j_L(\delta) = j_E(d)$.
Thus $E^d \cong E_d$, explaining our choice of notation.

Abusing notation slightly, we refer to the isomorphisms 
$E^d \rightarrow L_{\lambda_1(d)}$ and $E^d \rightarrow  L_{\lambda_2(d)}$ by $\lambda_1(d)$ and $\lambda_2(d)$
respectively. Note that both $\lambda_1(d)$ and $\lambda_2(d)$ map $(d+1,0) \in E^d$ to
$(1,0) \in L_{\lambda_1(d)},L_{\lambda_2(d)}$. 
Furthermore, if $d$ is replaced with $1/d$ in Table 3, then each $\lambda_i(d)$ 
remains invariant. Hence $L_{1/d}$ maps to $\lambda_1(d),\lambda_2(d)$ as well, via
$\xi_{1/d}(L_{1/d}) = E^{1/d}$, and
the point $(1/d + 1,0) \in E^{1/d}$ maps to $(1,0) \in L_{\lambda_1(d)},L_{\lambda_2(d)}$. 
As $1/d \in N_{2n4}(A)$, this means we
have a map from the pair $\{d,1/d\} \subset N_{2n4}(A)$ to the pair
$\{\lambda_1(d),\lambda_2(d)\} \subset N_{n2}(A)$. 
Note that $d,1/d$ are distinct, unless $d=-1$ and $q \equiv 5 \pmod{8}$, in which case we have
$\lambda_1(-1) = \lambda_2(-1) = 2$ with $\chi_2(2) = -1$ and hence $2 \in
N_{n2}(A)$. So in this exceptional case, we have an injection.

In the general case we thus have two pairs of maps: 
\begin{eqnarray}
\nonumber \lambda_1(d) \circ \xi_d : L_d &\longrightarrow& L_{\lambda_1(d)},\\
\nonumber \lambda_2(d) \circ \xi_d : L_d &\longrightarrow& L_{\lambda_2(d)},\\
\nonumber \lambda_1(1/d) \circ \xi_{1/d} : L_{1/d} &\longrightarrow& L_{\lambda_1(1/d)},\\
\nonumber \lambda_2(1/d) \circ \xi_{1/d} : L_{1/d} &\longrightarrow& L_{\lambda_2(1/d)},
\end{eqnarray}
with $L_{\lambda_1(d)} = L_{\lambda_1(1/d)}$ and $L_{\lambda_2(d)} = L_{\lambda_2(1/d)}$. 
We claim the above four maps taken together form an injective map from pairs $\{d,1/d\}$ to
pairs $\{\lambda_1(d),\lambda_2(d)\}$. Indeed suppose that for $d' \in N_{2n4}(A)$ we have 
\[
\lambda_1(d') =
\lambda_1(d) \ \text{or} \ \lambda_2(d),
\ \text{or} \ 
\lambda_2(d') =
\lambda_1(d) \ \text{or} \ \lambda_2(d).
\]
Then $\sqrt{d'} = \pm \sqrt{d}$ or $\sqrt{d'} = \pm 1/\sqrt{d}$, \ie $d' = d$
or $d' = 1/d$, and hence the map is injective on the stated pairs. 
\end{proof}

Now consider the reverse direction, which is almost immediate. 

\begin{lemma}
Let $A$ satisfy $q+1 - A \equiv 0 \pmod{8}$. Then there exists an injection
from $N_{n2}(A)$ to $N_{2n4}(A)$.
\end{lemma}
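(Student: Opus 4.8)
The plan is to mirror the construction of the previous lemma but to run the \emph{dual} $2$-isogeny: for $\mu \in N_{n2}(A)$ I would divide $L_\mu$ by the subgroup $\langle (1,0) \rangle$ rather than by $\langle (0,0) \rangle$. First I would record the reductions forced by $q+1-A \equiv 0 \pmod 8$. Since $\mu \in N_{n2}(A)$ means $\chi_2(\mu) = -1$, Table 1 shows we cannot also have $\chi_2(1-\mu)=-1$ (this would give $L_\mu(\F_q)[4] \cong \Z_2 \times \Z_2$, contradicting $8 \mid \#L_\mu(\F_q)$ by Lagrange's theorem); hence $\chi_2(1-\mu)=1$, the point $(1,0)$ is halvable, and $s := \sqrt{1-\mu} \in \F_q$.

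Next I would carry out the mirror image of Table~\ref{tab3}. Using V\'{e}lu's formula as before, $F^\mu := L_\mu/\langle (1,0) \rangle$ has full rational $2$-torsion, with abscissae $0, (1+s)^2, (1-s)^2$, the ``merged'' root $0$ being the common image of $(0,0)$ and $(\mu,0)$. Sending this merged root to the origin (\ie taking $e_1 = 0$, $e_2 = (1+s)^2$) produces the $\F_q$-isomorphic Legendre curve $L_d$ with parameter $d = \big(\tfrac{1-s}{1+s}\big)^2$, the isomorphism being defined over $\F_q$ because $e_2 - e_1 = (1+s)^2$ is a square. It then remains to check $d \in N_{2n4}(A)$. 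Since $L_d \cong F^\mu$ and $F^\mu$ is $2$-isogenous to $L_\mu$ we have $A(d,\F_q) = A$; clearly $\chi_2(d)=1$; and the crux is the single identity $\chi_2(\sqrt d) = \chi_2\big(\tfrac{1-s}{1+s}\big) = \chi_2(1 - s^2) = \chi_2(\mu) = -1$, using $\chi_2(-1)=1$. Thus $\sqrt d$ is a non-square, $d$ is not a fourth power, and $d \in N_{2n4}(A)$.

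Finally I would establish injectivity by showing this assignment genuinely inverts the forward construction. A short computation gives $\mu = \lambda_2(d)$, so the reverse map sends $\mu$ back into the pair $\{\lambda_1(d),\lambda_2(d)\}$ produced in the previous lemma from $\{d,1/d\}$. The one subtlety is that $N_{n2}(A)$ is not closed under $\mu \mapsto 1/\mu$ (which lands in $N_{n2}(-A)$ by Lemma~\ref{easyisos}(ii)), so the correct object to track is the pair $\{\mu,\ \mu/(\mu-1)\}$, the two elements corresponding to the two sign choices of $s$; I would verify $\mu/(\mu-1) = \lambda_1(d)$ again lies in $N_{n2}(A)$ and maps to the same pair $\{d,1/d\}$. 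This exhibits the dual-isogeny assignment as the two-sided inverse of the forward pairs-map, giving a bijection between the pairs of $N_{2n4}(A)$ and those of $N_{n2}(A)$, whence the asserted injection and in fact the full equality~(\ref{mainbijection}). The main obstacle is precisely this bookkeeping --- keeping the $S_3$-orbit and pairing structure straight so that the dual assignment is seen to invert the forward one --- while the character identity $\chi_2(\sqrt d) = \chi_2(\mu)$ is the key step guaranteeing that the image lands in $N_{2n4}(A)$ and not back in $N_{n2}(A)$.
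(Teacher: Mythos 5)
Your proposal is correct and follows essentially the same route as the paper's proof: quotient $L_\mu$ by $\langle(1,0)\rangle$, choose the Legendre normalisation of the quotient that sends the merged image of $(0,0)$ and $(\mu,0)$ to the origin, obtaining $d=\big(\tfrac{1-s}{1+s}\big)^2$ over $\F_q$ since $e_2-e_1=(1+s)^2$ is a square, use $\chi_2\big(\tfrac{1-s}{1+s}\big)=\chi_2\big((1-s)(1+s)\big)=\chi_2(\mu)=-1$ to land in $N_{2n4}(A)$, and prove injectivity on the pairs $\{\mu,\mu/(\mu-1)\}\mapsto\{d,1/d\}$ by composing with the forward map (the paper records the same identities $\lambda_2(\mu_1(e))=e$, $\lambda_1(\mu_1(e))=e/(e-1)$ immediately afterwards, and your translated Weierstrass model of the quotient gives the same $\lambda$ and the same $\chi_2(e_2-e_1)$). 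One throwaway parenthetical is inaccurate --- the two sign choices of $s$ interchange $d$ and $1/d$ (i.e.\ $\mu_1$ and $\mu_2$), while the pairing $\{\mu,\mu/(\mu-1)\}$ comes from $\sqrt{1-\mu/(\mu-1)}=1/s$ --- but this does not affect the argument.
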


\begin{proof}
Let $e \in N_{n2}(A)$. For $q+1-A \equiv 0 \pmod{8}$, by Table 1 we must have $\chi_2(e) = -1$ and
$\chi_2(1-e)=1$. The only isomorphism defined over $\F_q$ in this case 
maps $ L_e \longrightarrow L_{e/(e-1)}$ (see (15)). 
Therefore if $e \in N_{n2}(A)$, then $\frac{e}{e-1} \in N_{n2}(A)$.
Indeed $\lambda_2(d) = \lambda_1(d)/(\lambda_1(d)-1)$ (and $\lambda_1(d) = \lambda_2(d)/(\lambda_2(d)-1)$).

Since $\lambda_1(d)$ and $\lambda_2(d)$ map the $\hat{\xi}_d$-generating element $(d+1,0)$
of $E^d$ to $(1,0)$ in $L_{\lambda_1}$ and $L_{\lambda_2}$ (and similarly
$(1/d+1,0) \in E^{1/d}$ to $(1,0)$), the dual $\hat{\xi}_d$ of $\xi_d$ applied
to the isomorphism class representative $L_{e}$ is given by $L_e/\langle (1,0) \rangle$, and
similarly for $L_{e/(e-1)}$. Hence if $e = \lambda_1(d)$ or $\lambda_2(d)$, then $\hat{\xi}_d$ 
maps elements of $N_{n2}(A)$ to the original isomorphism class of $L_d$.
We now analyse this map and identify which curves in the resulting isomorphism class are relevant.

For the sake of generality let $\gamma_e : L_e \longrightarrow L_e/\langle (1,0)
\rangle$ and let $F^e = \gamma_e(L_e)$. Using V\'{e}lu's formula $F^e$ has equation 
$y^2 = x^3 - (e+1)x^2 - (6e-5)x - 4e^2 +7e -3 = 
(x - (e-1))(x -(1 + 2\sqrt{1-e}))(x -(1 - 2\sqrt{1-e}))$, and
\[
\gamma_e(x,y) := \bigg( x + \frac{1-e}{x-1}, y\bigg(1 - \frac{1-e}{(x-1)^2}\bigg) \bigg).
\]
Note that $\gamma_e(0,0) = \gamma_e(e,0) = (e-1,0)$. For $e \in N_{n2}(A)$, the two $F^e$-isomorphic Legendre curves used in
the bijection are given in Table 4.

\begin{table}[here]
\caption{Two $L_e/\langle (1,0) \rangle$-isomorphic Legendre curves in
  $N_{2n4}(A)$ for $e \in N_{n2}(A)$ and $q+1-A \equiv 0 \pmod{8}$}
\begin{center}
\begin{tabular}{c|c|c}
\hline
$(e_1,e_2,e_3)$     & $\mu$ & $(e_2-e_1)$ \\
\hline
$(e-1,1 + 2\sqrt{1-e},1-2\sqrt{1-e})$  & $\mu_1 = \big(\frac{1-\sqrt{1-e}}{1+\sqrt{1-e}} \big)^2$
& $(1+\sqrt{1-e})^2$  \\
$(e-1,1-2\sqrt{1-e},1+2\sqrt{1-e})$  & $\mu_2 = \big(\frac{1+\sqrt{1-e}}{1-\sqrt{1-e}} \big)^2$
& $(1 - \sqrt{1-e})^2$  \\
\hline
\end{tabular}
\end{center}
\end{table}

Observe that $\chi_2(e_2 - e_1) = 1$ in each case, and the same is true for $\mu_1(e),\mu_2(e)$. 
Furthermore, $\mu_1(e),\mu_2(e)$ are both in
$N_{2n4}(A)$ since $(1  \pm \sqrt{1-e})/(1 \mp \sqrt{1-e})$ is not square. 
Indeed, since $1-e$ is square, write $1-e = b^2$ so that $e = 1- b^2 = (1+b)(1-b)$.
Therefore $-1 = \chi_2(e) = \chi_2(1+b)\chi_2(1-b) = \chi_2(1+b)/\chi_2(1-b) = \chi_2((1+b)/(1-b))$.

Again abusing notation slightly, we refer to the isomorphisms 
$F^e \rightarrow L_{\mu_1(e)}$ and $F^e \rightarrow  L_{\mu_2(e)}$ by $\mu_1(e)$ and $\mu_2(e)$
respectively.
Note that both $\mu_1(e)$ and $\mu_2(e)$ map $(e-1,0) \in F^e$ to
$(0,0) \in L_{\mu_1(e)},L_{\mu_2(e)}$. 
Furthermore, if $e$ is replaced with $e/(e-1)$ in Table 4, then each $\mu_i(e)$ 
remains invariant. Hence $L_{e/(e-1)}$ maps to $\mu_1(e),\mu_2(e)$ as well, via
$\gamma_{e/(e-1)}(L_{e/(e-1)}) = F^{e/(e-1)}$, and
the point $(e/(e-1) - 1,0) \in F^{e/(e-1)}$ maps to $(0,0) \in L_{\mu_1(e)},L_{\mu_2(e)}$. 
As $e/(e-1) \in N_{n2}(A)$, this means we
have a map from the pair $\{e,e/(e-1)\} \subset N_{n2}(A)$ to the pair
$\{\mu_1(e),\mu_2(e)\} \subset N_{2n4}(A)$. 
Note that $e,e/(e-1)$ are distinct, unless $e = 2$ and $q \equiv 5 \pmod{8}$, in which case we have
$\mu_1(2) = \mu_2(2) = -1$ with $\chi_4(-1) \ne 1$ and hence $-1 \in
N_{2n4}(A)$. So in this exceptional case, we have an injection (in fact the inverse of the previous injection).

In the general case we thus have two pairs of maps: 
\begin{eqnarray}
\nonumber \mu_1(e) \circ \gamma_e : L_e &\longrightarrow& L_{\mu_1(e)},\\
\nonumber \mu_2(e) \circ \gamma_e : L_e &\longrightarrow& L_{\mu_2(e)},\\
\nonumber \mu_1(e/(e-1)) \circ \gamma_{e/(e-1)} : L_{e/(e-1)} &\longrightarrow& L_{\mu_1(e/(e-1))},\\
\nonumber \mu_2(e/(e-1)) \circ \gamma_{e/(e-1)} : L_{e/(e-1)} &\longrightarrow& L_{\mu_2(e/(e-1))},
\end{eqnarray}
with $L_{\mu_1(e)} = L_{\mu_1(e/(e-1))}$ and $L_{\mu_2(e)} = L_{\mu_2(e/(e-1))}$. 
We claim the above four maps taken together form an injective map from pairs $\{e,e/(e-1)\}$ to
pairs $\{\mu_1(e),\mu_2(e)\}$. Indeed suppose that for $e' \in N_{n2}(A)$ we have 
\[
\mu_1(e') =
\mu_1(e) \ \text{or} \ \mu_2(e),
\ \text{or} \ 
\mu_2(e') =
\mu_1(e) \ \text{or} \ \mu_2(e).
\]
Then $e' = e$ or $e' = e/(e-1)$, and hence the map is injective on the stated pairs. 
\end{proof}

We have thus proven:
\begin{theorem}\label{bijproof}
Let $A$ satisfy $q+1 - A \equiv 0 \pmod{8}$. Then there exists a bijection
between $N_{2n4}(A)$ and $N_{n2}(A)$.
\end{theorem}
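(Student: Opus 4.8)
The plan is to obtain the bijection directly from the two injections just constructed, since the substantive work is already done in the two preceding lemmata. First I would observe that both $N_{2n4}(A)$ and $N_{n2}(A)$ are finite, being subsets of $\F_q \setminus \{0,1\}$. The two lemmata furnish an injection $N_{2n4}(A) \hookrightarrow N_{n2}(A)$ and an injection $N_{n2}(A) \hookrightarrow N_{2n4}(A)$; by the Cantor--Schr\"oder--Bernstein theorem, which is elementary in the finite case, these force $|N_{2n4}(A)| = |N_{n2}(A)|$ and hence the existence of a bijection. This already suffices, so no new ideas are required beyond invoking the two lemmata.

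To exhibit an explicit bijection rather than merely a cardinality equality, I would instead verify that the two constructed maps are mutually inverse. The forward map is assembled from the $2$-isogeny $\xi_d : L_d \rightarrow L_d/\langle (0,0)\rangle = E^d$ followed by one of the isomorphisms $\lambda_i$ to $L_{\lambda_i(d)}$, while the reverse map uses the dual isogeny $\widehat{\xi} : L_e \rightarrow L_e/\langle (1,0)\rangle = F^e$ followed by one of the $\mu_j$. Because $\xi_d$ and $\widehat{\xi}$ are dual $2$-isogenies, the composition $\gamma_{\lambda_i(d)} \circ \lambda_i \circ \xi_d$ is (up to isomorphism) multiplication by $2$ on $L_d$, so that $F^{\lambda_i(d)} \cong L_d$ and the isomorphism class returns to that of $L_d$. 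Tracking the explicit parameter identities of Tables 3 and 4 then pins down which two Legendre parameters in that class are selected: one computes directly that $\{\mu_1(\lambda_1(d)),\mu_2(\lambda_1(d))\} = \{d,1/d\}$, using that $\sqrt{1-\lambda_1(d)} = (1+\sqrt{d})/(1-\sqrt{d})$, so the round trip on the pair $\{d,1/d\} \subset N_{2n4}(A)$ recovers itself via $\{\lambda_1(d),\lambda_2(d)\} \subset N_{n2}(A)$. The invariance of each $\lambda_i$ under $d \mapsto 1/d$ and of each $\mu_j$ under $e \mapsto e/(e-1)$ makes this pairing of pairs consistent.

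I expect the only care needed to lie in the degenerate cases, where the pairs collapse to singletons: $d=-1$ with $q \equiv 5 \pmod{8}$ in the first direction, and $e=2$ with $q \equiv 5 \pmod{8}$ in the second, where $\lambda_1 = \lambda_2 = 2$ and $\mu_1 = \mu_2 = -1$ respectively. These exceptional points are already isolated inside the two lemmata, and one checks that the two singleton maps are inverse to one another, so they cause no mismatch. Consequently Theorem~\ref{bijproof} follows either by the finite Cantor--Schr\"oder--Bernstein argument or, more informatively, by the observation that the two injections are mutual inverses; the main (and only) obstacle is the routine verification of the parameter round-trip, which the duality of $\xi_d$ and $\widehat{\xi}$ renders transparent.
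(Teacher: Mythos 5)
Your proposal is correct and takes essentially the same route as the paper: the theorem is deduced immediately from the two preceding injection lemmata (an elementary cardinality argument for finite sets), and the paper likewise records afterwards that the two maps are mutual inverses via the identities $\mu_1(\lambda_1(d)) = d$, $\mu_2(\lambda_1(d)) = 1/d$, $\lambda_2(\mu_1(e)) = e$, and so on. Your round-trip computation using $\sqrt{1-\lambda_1(d)} = (1+\sqrt{d})/(1-\sqrt{d})$ and your handling of the degenerate pairs $d=-1$, $e=2$ match the paper's treatment.
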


Furthermore, using the above definitions one can check that
\[
\mu_1(\lambda_1(d)) = \mu_1(\lambda_2(d)) = d, \ \text{and} \ \mu_2(\lambda_1(d)) = \mu_2(\lambda_2(d)) = 1/d,
\]
and similarly
\[
\lambda_1(\mu_1(e)) = \lambda_1(\mu_2(e)) = e/(e-1), \ \text{and} \ \lambda_2(\mu_1(e)) = \lambda_2(\mu_2(e)) = e,
\]
and that
\begin{eqnarray}
\nonumber (\mu_2(\lambda_1(d)) \circ \gamma_{\lambda_1(d)}) \circ (\lambda_1(d) \circ \xi_d) &=& [2]
\ \text{on} \ L_d,\\
\nonumber (\mu_2(\lambda_2(d)) \circ \gamma_{\lambda_2(d)}) \circ (\lambda_2(d) \circ \xi_d) &=& [2]
\ \text{on} \ L_d,\\
\nonumber (\lambda_1(\mu_1(e)) \circ \xi_{\mu_1(e)}) \circ (\mu_1(e) \circ \gamma_{e})  &=& [2] 
\ \text{on} \ L_e,\\
\nonumber (\lambda_1(\mu_2(e)) \circ \xi_{\mu_2(e)}) \circ (\mu_2(e) \circ \gamma_{e})  &=& [2] 
\ \text{on} \ L_e.
\end{eqnarray}

Observe that if one substitutes $d \in N_{2n4}(A)$ for $e$ in the latter two maps, then one
obtains $2$-isogenies from $L_d$ to $L_{\mu_1(d)}$,$L_{\mu_2(d)}$, however
$\mu_1(d), \mu_2(d) \not\in N_{n2}(A)$, so the bijection can only be used in the manner proven.
So while the bijection principally relies on a $2$-isogeny and its dual, this
alone is insufficient; one needs to also consider the isomorphism class
representatives used, which is natural given that we are considering Legendre
curve parameters $d$ rather than isomorphism classes of curves.

With regard to Theorem~\ref{p14main}, note that Theorem~\ref{bijproof}
directly implies Theorem~\ref{p14main}(iii).
Let $A$ be any unobstructed integer satisfying $q+1 - A \equiv 0 \pmod{16}$, and consider the set of all 
curves $L_d(\F_q)$ counted by $N(A)$. By Lemma~\ref{ordinary} this set is non-empty.
Theorems~\ref{katz2.8} and~\ref{katz2.11} show that for $q+1-A \equiv 0 \pmod{16}$ the ratio $N(A)/N(-A)
> 2$ and thus $N_4(A) = N(A) - 2N(-A) > 0$, which thus proves part $(ii)$ and completes the proof.


\section{Curves defined using a ratio of two quadratics}\label{quad}

Following on from \S\ref{pointcount} where we expressed the equation defining
$E_d$ in the form~(\ref{frac}), in this section we briefly discuss curves defined using a ratio of two 
quadratic polynomials or a ratio of a quadratic and a linear polynomial. We
demonstrate that one can derive an addition formula for these types
of curves and prove for them results similar to the results of the preceeding sections. 

\subsection{Ratio of two quadratics} 
Let $f(x)=a_1x^2+b_1x+c_1, g(x)=a_2x^2+b_2x+c_2\in \fq[x]$ be as in Lemma~\ref{q-c-s}, 
$a_1,a_2$ both non-zero, and define a curve by the equation
\begin{equation}\label{curveC}
C/\F_q: y^2={\frac{a_1x^2+b_1x+c_1}{a_2x^2+b_2x+c_2}}.
\end{equation}
Notice that writing the curve equation as a ratio of two quadratics is just for the sake of the 
exposition and it is understood that $C/\fq$ is the projective curve defined by the equation
$$
(a_2x^2+b_2xz+c_2z^2)y^2=a_1x^2z^2+b_1xz^3+c_2z^4.
$$

Now suppose that 
$$
f(x)=a_1(x-\omega_1)(x-\omega_2),
$$
and 
$$
g(x)=a_2(x-\omega_3)(x-\omega_4).
$$
The conditions of Lemma~\ref{q-c-s} imply that $\omega_1,\omega_2,\omega_3,\omega_4$ are
pairwise distinct. This implies that there is a linear fractional transformation
$$
\phi: x \mapsto \frac{u_1x+u_2}{u_3x+u_4}\;\;\;\;u_i\in\overline{\fq},
$$
which maps $\omega_1,\omega_2,\omega_3,\omega_4$ to $\mu,-\mu,1/\mu,-1/\mu$ provided
that the cross-ratio condition
$$ 
\frac{(\omega_1-\omega_3)(\omega_2-\omega_4)}{(\omega_2-\omega_3)(\omega_1-\omega_4)}=
\left(\frac{\mu^2-1}{\mu^2+1}\right)^2
$$
 is satisfied (see~[Chapter 4]\cite{needham}). The map $\phi$ induces the map
$$
\psi: x \mapsto \frac{-u_4x+u_2}{u_3x-u_1}$$
which in turn induces an isomorphism of the function field $\overline{\fq}(C)$ and the function field
of the curve $E^\mu$, $\overline{\fq}(E^\mu)$, where $E^{\mu}$ is defined by:
$$
y^2=\frac{x^2-\mu^2}{x^2-\frac{1}{\mu^2}}.
$$

$E^\mu$ is clearly isomorphic to the original Edwards curve~(\ref{original}). Thus
$\overline{\fq}(C)$ is an elliptic function field and hence the desingularization
of $C$ yields an elliptic curve. One can obtain results similar to the ones 
proven in~\cite{edwards} for the curve $C$. For example, one can obtain an addition formula
for the points on $C$ by using the Edwards curve addition formula and the map $\phi$,
as $\phi$ induces a group homomorphism between the group of points on $C$ and
the group of points on $E^\mu$.

\subsection{Ratio of a quadratic and a linear polynomial}

Now suppose that for the curve~(\ref{curveC}) we have $a_2=0, b_2\neq 0$, giving the
corresponding curve 
\begin{equation}\label{quadlin}
C'/\F_q: y^2={\frac{a_1x^2+b_1x+c_1}{b_2x+c_2}}.
\end{equation}
Then there is a linear fractional transformation 
\[
\varphi: x \mapsto \frac{u_{1}^{'}x+u_{2}^{'}}{u_{3}^{'}x+u_{4}^{'}},\;\; u_{i}^{'}\in \overline{\F_q},
\]
which maps $C'$ to a curve of the form~\ref{curveC} defined by a ratio of two quadratics, and
which induces an isomorphism between the function fields of $C'$ and a curve
of the form~\ref{curveC}. 
Thus our discussion in the previous section applies to curves defined
using the ratio of a quadratic and a linear polynomial.

\subsubsection{Huff curves}
The Huff's model of elliptic curves introduced by Huff~\cite{huff}
which has recently captured the interest of the cryptographic community~\cite{joy,wu-feng}
can be transformed to one of the form~(\ref{quadlin}).
In particular, the Huff's curve, defined by the equation 
\[
H_{a,b}/\fq: ax(y^2-1)=by(x^2-1),
\]
is transformed to the curve
\[
y^2=\frac{bt^2+at}{at+b},
\] 
by setting $xy=t$. Thus one can generate the Huff's curve addition law using the process 
outlined in the previous section. Furthermore, whenever a curve family is
$\F_q$-isomorphic to an Edwards or Legendre curve, one can deduce some
properties of the isogeny classes. For example, we have~\cite{wu-feng} 
\[
H_{a,b} \cong E_{\big(\frac{a-b}{a+b}\big)^2} \ \text{over} \ \F_q,
\]
and so applying Theorems~\ref{1mod4nonsquare} and~\ref{notsquare2} we conclude that for any
unobstructed $A$, if $q+1 - A \equiv 0 \pmod{8}$ then there exists a Huff's curve over $\F_q$ with that cardinality.
One can also apply the results of this paper directly to the Jacobi intersection family~\cite{jacobi}
\[
x^2 + y^2 = 1 \ \text{and} \ dx^2 + z^2 = 1,
\]
since this family has $j$-invariant $j_L(d)$.

\begin{remark}
A new single-parameter family of elliptic curves was introduced
in~\cite{Fre-Cas} (amongst more than $50,000$ others) defined 
by the curve equation
\[
Ax+x^2-xy^2+1=0,
\] 
which enjoys a uniform $x$-coordinate addition formula.
The curve equation can be rewritten as
\[
y^2=\frac{x^2+Ax+1}{x}.
\]
Hence one can obtain addition formula for this family of curves using the addition law of
Edwards curves, although we do not claim that this method generates the most
efficient group law. 
\end{remark}


\section{Concluding remarks}

We have identified the set of isogeny classes of Edwards curves over finite fields of odd characteristic, 
and have found the proportion of parameters $d$ in each isogeny class which give rise to complete Edwards curves.
Furthermore, we have identified the set of isogeny classes of original Edwards curves, and proven similar proportion results
for this sub-family of curves.

Although not included in the paper, by analysing the $4$- and $8$-torsion of
Legendre curves, and using variants of the established bijections, we were able to prove parts of
Katz's ratio theorems. We believe an interesting and challenging problem is
whether or not the methods of this paper can be developed to provide an alternative proof for
all parts of Katz's ratio theorems; and conversely, can Katz's methods be used to find relationships between
$N_{2^k}(A)$ and $N(A)$ similar to those proven in 
Theorems~\ref{1mod4nonsquare},~\ref{notsquare2},~\ref{pm1main}(iii) and~\ref{p14main}(iii), for $k>2$ and $q \equiv 1 \pmod{2^k}$?

\section*{Acknowledgements}

The authors would like to thank Steven Galbraith for offering some useful initial
pointers, and for answering all our questions.

\bibliographystyle{plain}

\bibliography{EI}

\end{document}